\newtheorem{thm}{Theorem}[section]
\newtheorem{prop}[thm]{Proposition}
\newtheorem{lem}[thm]{Lemma}
\newtheorem{cor}[thm]{Corollary}
\newtheorem{rem}[thm]{Remark}
\newtheorem{con}[thm]{Conjecture}
\newtheorem{ques}[thm]{Question}
\theoremstyle{definition}
\newtheorem{defn*}{Definition}
\DeclareMathOperator{\diam}{diam}
\DeclareMathOperator{\rad}{rad}
\DeclareMathOperator{\eqt}{eqt}
\DeclareMathOperator{\dd}{D}
\tikzstyle{none}=[inner sep=0mm]
\tikzstyle{whitevertex}=[fill=white, draw=black, shape=circle]
\tikzstyle{blackvertex}=[fill=black, draw=black, shape=circle]
\tikzstyle{dgreyvertex}=[fill=gray, draw=black, shape=circle]
\tikzstyle{lgreyvertex}=[fill={rgb,255: red,191; green,191; blue,191}, draw=black, shape=circle]
\tikzstyle{thick}=[-, line width=1.4pt]
\tikzstyle{light}=[-, draw={rgb,255: red,191; green,191; blue,191}]
\title{\huge Isometric Cycles and a Generalization of Moore Graphs}
\author{Brandon Du Preez \\
	University of Cape Town\\
	Department of Mathematics and Applied Mathematics\\
	Laboratory for Discrete Mathematics and Theoretical Computer Science\\
brandon.dupreez@uct.ac.za}
\date{July 2024}
\begin{document}
	\maketitle
	
	\begin{abstract}
		The equator of a graph is the length of a longest isometric cycle.
		We bound the order $n$ of a graph from below by its equator $q$, girth $g$ and minimum degree $\delta$ --- and show that this bound is sharp when there exists a Moore graph with girth $g$ and minimum degree $\delta$.
		The extremal graphs that attain our bound give an analogue of Moore graphs. 
		We prove that these extremal `Moore-like' graphs are regular, and that every one of their vertices is contained in some maximum length isometric cycle. 
		We show that these extremal graphs have a highly structured partition that is unique, and easily derived from any of its maximum length isometric cycles.
		We characterize the extremal graphs with girth 3 and 4, and those with girth 5 and minimum degree 3.
		We also bound the order of $C_4$-free graphs with given equator and minimum degree, and show that this bound is nearly sharp.
		We conclude with some questions and conjectures further relating our extremal graphs to cages and Moore graphs.
	\end{abstract}
	
	\begin{center}
		\textbf{Keywords}\\
		Isometric cycle, minimum degree, equator, order bound, Moore graph
	\end{center}
	
	\section{Introduction}
	The \textit{equator} of a graph $G$, denoted $\eqt(G)$, is the length of the longest isometric cycle of $G$.
	Lokshtanov proved that, somewhat surprisingly, the equator can be computed in polynomial time \cite{lokshtanov2009finding}. 
	If $G$ has diameter $d$ and radius $r$, then two opposite vertices $u$ and $v$ of a longest isometric cycle satisfy $d(u,v) \leq d$, so the equator is bounded by $\eqt(G) \leq 2d + 1 \leq 4r + 1$.
	A large body of research (discussed in the next paragraph) tackles the problem of bounding the order of a graph by some combination of its radius, diameter, girth and minimum degree.
	Given the relationship between the equator, radius and diameter, we expect that some of these bounds could be improved for graphs with large equators.
	In this paper, we bound the order of a graph from below using its equator, girth, and minimum degree. 
	This bound generalizes the well-known Moore bound for a graph of given girth and minimum degree (see \cite{miller2012moore} for a detailed survey).
	We describe the resulting `Moore-like' graphs that attain this bound, and characterize them exactly for the low girth cases.
	These extremal graphs are highly structured, and reflect some of the properties of the classic Moore graphs.
	
	We briefly survey the existing literature on similar order bounds.
	The problem of bounding the diameter and radius using the minimum degree and order was first addressed by Erd\"{o}s, Pach, Pollack and Tuza in \cite{erdHos1989radius}.
	They proved the following theorems:
	\begin{thm}\textup{\cite{erdHos1989radius}}
		\label{thm:intro_eppt_any}
		Let $G$ be a connected graph with minimum degree $\delta \geq 2$, order $n$, radius $r$ and diameter $d$. Then
		\begin{align*}
			d \leq \left\lfloor  \frac{3n}{\delta + 1} \right\rfloor - 1 \text{ and }
			r \leq \frac{3(n-3)}{2(\delta + 1)} + 5.
		\end{align*}
	\end{thm}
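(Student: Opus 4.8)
The two bounds share a common engine, which I will call \emph{neighborhood packing}: along any geodesic (shortest path) $v_0 v_1 \cdots v_k$, the closed neighborhoods $N[v_{3i}]$ of the vertices whose indices are multiples of $3$ are pairwise disjoint. Indeed, if $N[v_{3i}]$ met $N[v_{3j}]$ for $i<j$, then $v_{3i}$ and $v_{3j}$ would either be adjacent or share a common neighbor, giving $d(v_{3i},v_{3j}) \le 2$; but a subpath of a geodesic is a geodesic, so $d(v_{3i},v_{3j}) = 3(j-i) \ge 3$, a contradiction. Since $\delta \ge 2$ forces $|N[v]| \ge \delta + 1$ for every $v$, each packed neighborhood contributes at least $\delta + 1$ distinct vertices.

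For the diameter bound I would apply this to a diametral geodesic $v_0 v_1 \cdots v_d$ with $d(v_0,v_d)=d$. Packing at the indices $0,3,\dots,3\lfloor d/3\rfloor$ yields $\lfloor d/3\rfloor + 1$ pairwise disjoint closed neighborhoods, so
\[
n \ge (\delta+1)\left(\left\lfloor \tfrac{d}{3}\right\rfloor + 1\right).
\]
Rearranging gives $\lfloor d/3\rfloor \le \frac{n}{\delta+1} - 1$, and since $d \le 3\lfloor d/3\rfloor + 2$ with $d$ an integer, this collapses to $d \le \lfloor 3n/(\delta+1)\rfloor - 1$, as required. This step is routine once the packing lemma is in hand.

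The radius bound is the hard part, because naive packing along a single geodesic from a central vertex gives only the weaker constant $3$ rather than $3/2$. To recover the factor of two I would exploit that a center $c$ (a vertex of eccentricity $r$) cannot have all of its far vertices lying in one direction: if $x$ realizes $d(c,x)=r$ along a geodesic whose first edge is $cp_1$, then minimality of the eccentricity of $c$ forces $\mathrm{ecc}(p_1) \ge r$, producing a second far vertex $y$, every geodesic to which must leave $c$ through a neighbor other than $p_1$ (otherwise $d(p_1,y)=d(c,y)-1\le r-1$, contradicting $d(p_1,y)\ge r$). The plan is then to pack closed neighborhoods by \emph{distance shells} around $c$ simultaneously along a $c$--$x$ geodesic and a $c$--$y$ geodesic. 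Packed centers at different distances $3i \ne 3j$ from $c$ are automatically at distance $\ge 3$ apart by the triangle inequality, so their neighborhoods are disjoint for free; this roughly doubles the count of packed neighborhoods and should deliver $n \gtrsim (\delta+1)\cdot \frac{2r}{3}$, i.e.\ $r \le \frac{3n}{2(\delta+1)} + O(1)$.

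The main obstacle, and the source of the additive constants $-3$ and $+5$, is disjointness of packed neighborhoods lying in the \emph{same} shell: two vertices equidistant from $c$ on the two geodesics may be close, and near the far ends the two geodesics can even coincide (as they do at the antipode of an even cycle, the extremal example here). I would handle this by packing only those shells in which the two geodesics provably stay at distance $\ge 3$, absorbing the finitely many exceptional shells near $c$ and near the far endpoints into the constant term, and by a short case analysis invoking the center property to guarantee the required separation in the interior shells.
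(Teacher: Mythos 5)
This theorem is quoted by the paper from \cite{erdHos1989radius} without proof, so there is no internal argument to compare against; I am judging your proposal on its own merits. Your diameter half is correct and complete: the disjointness of the closed neighborhoods $N[v_0], N[v_3], \dots, N[v_{3\lfloor d/3\rfloor}]$ along a diametral geodesic gives $n \ge (\delta+1)\left(\lfloor d/3\rfloor + 1\right)$, and the case analysis on $d \bmod 3$ together with integrality yields exactly $d \le \lfloor 3n/(\delta+1)\rfloor - 1$. This is the standard argument for that inequality.

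The radius half, however, has a genuine gap, and it sits precisely where the theorem's difficulty lives. Everything you establish about $y$ guarantees only that some $c$--$y$ geodesic $Q$ leaves $c$ through a neighbor $q_1 \neq p_1$; nothing prevents $P$ and $Q$ from meeting again at a common vertex $u$ with $d(c,u)=2$ and coinciding from there onward, splitting only at the far end (note $x$ and $y$ can be adjacent, as they are in the even cycle, so the split can occur at the last step). In that scenario the shell-by-shell packing along ``both'' geodesics counts each closed neighborhood once, not twice, and you recover only $n \ge (\delta+1)\left(\lfloor r/3\rfloor+1\right)$ --- the constant $3$, not $3/2$. Moreover, in this scenario \emph{every} interior shell is exceptional, so the plan of ``absorbing finitely many exceptional shells into the constant'' collapses: boundedness of the exceptional set is not a bookkeeping detail but is equivalent to the separation claim you are trying to prove. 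The intuition that centrality of $c$ forbids long merging is right (in concrete merge constructions the merge vertex $u$ tends to have eccentricity about $\mathrm{ecc}(c)-d(c,u)$, contradicting that $c$ is central), but proving it requires applying the hypothesis $\mathrm{ecc}(w)\ge r$ to vertices $w$ \emph{along} the geodesics, which produces new far vertices $z$ with $d(w,z)\ge r$ whose geodesics may in turn merge with the old ones --- an iterative or global argument, not the single application of centrality at $p_1$ that you make. That iteration, with its termination and the bookkeeping that produces the $-3$ and $+5$, is the actual content of the Erd\H{o}s--Pach--Pollack--Tuza proof and is absent from your sketch.
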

	\begin{thm}\textup{\cite{erdHos1989radius}}
		\label{thm:intro_eppt_trianglefree}
		Let $G$ be a connected triangle-free graph with minimum degree $\delta \geq 2$, order $n$, radius $r$ and diameter $d$. Then
		\begin{align*}
			d \leq 4 \left\lceil  \frac{n - \delta - 1}{2\delta} \right\rceil \text{ and }
			r \leq \frac{n-2}{\delta} + 12.
		\end{align*}
	\end{thm}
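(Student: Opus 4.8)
The plan is to prove both inequalities by fixing a geodesic that realizes the relevant distance and then bounding $n$ from below by summing the sizes of carefully chosen, pairwise disjoint open neighbourhoods. Two elementary facts drive everything. First, if $u$ and $v$ lie on a geodesic at distance at least $3$, then $N(u)\cap N(v)=\emptyset$, since a common neighbour would be a length-$2$ shortcut. Second, because $G$ is triangle-free, $N(u)\cap N(v)=\emptyset$ whenever $u$ and $v$ are adjacent (a common neighbour would close a triangle). Each fact converts the minimum-degree hypothesis into a lower bound on $n$ once we know the neighbourhoods being summed are disjoint.

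For the diameter bound I would take a geodesic $v_0 v_1 \cdots v_d$ and group the indices into consecutive blocks of length $4$. Inside the block occupying positions $4k+1,4k+2$ I use the adjacent pair $v_{4k+1}\sim v_{4k+2}$; triangle-freeness gives $|N(v_{4k+1})\cup N(v_{4k+2})| \geq 2\delta$. Any vertex drawn from one block and any vertex drawn from a later block are at geodesic distance at least $3$, so the first fact makes the block-neighbourhoods pairwise disjoint. Summing over the roughly $d/4$ complete blocks yields $n \geq 2\delta\cdot(\text{number of complete blocks})$, and a separate accounting of the truncated block at the end of the geodesic is what produces the additive $-\delta-1$ correction and the ceiling, giving $d \leq 4\lceil (n-\delta-1)/(2\delta)\rceil$.

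The radius bound is the part I expect to fight with. Here I would let $c$ be a centre, so $\mathrm{ecc}(c)=r$, fix a vertex $z$ with $d(c,z)=r$, and walk along a $c$--$z$ geodesic $c=u_0 u_1 \cdots u_r=z$, letting $S=\bigcup_i N(u_i)$. For a fixed vertex $x$, the set of indices $i$ with $x\in N(u_i)$ contains no two consecutive integers (triangle-freeness) and, being the common-neighbour set of path vertices, spans at most distance $2$ along the geodesic; hence it has at most two elements. Therefore $(r+1)\delta \leq \sum_i |N(u_i)| = \sum_x |I_x| \leq 2|S| \leq 2n$, which already gives $r \leq 2n/\delta - 1$. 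The genuine difficulty is that this crude count is off by a factor of about $2$ from the target coefficient $1/\delta$: improving it requires exploiting that $c$ is a \emph{centre}, not merely the endpoint of a long geodesic, presumably through a region-growing or layered argument that forces almost every counted vertex to be private to a single path vertex, with all the resulting boundary irregularities near $c$ and $z$ absorbed into the crude but absolute constant $12$.

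The main obstacle is therefore not the diameter inequality but the radius one. Unlike the diameter, the radius is a minimum over all centres, so it cannot simply be read off a single long geodesic, and the honest work lies in the overlap bookkeeping and the endpoint analysis that deliver the sharp coefficient $1/\delta$ together with the additive constant $12$.
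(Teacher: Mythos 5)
The paper you were given never proves this statement: Theorem \ref{thm:intro_eppt_trianglefree} is quoted as background from Erd\H{o}s, Pach, Pollack and Tuza \cite{erdHos1989radius}, so your proposal must be judged against the statement itself. Your diameter half is sound and is the standard argument: adjacent pairs along a geodesic have disjoint open neighbourhoods by triangle-freeness, pairs from different blocks are at distance at least $3$ and hence also have disjoint neighbourhoods, and the bookkeeping genuinely closes (placing the pairs at positions $(v_{4k},v_{4k+1})$ gives exactly $\lceil d/4\rceil$ pairs, hence $n \geq 2\delta\lceil d/4\rceil$, which already implies $d \leq 4\left\lceil (n-\delta-1)/(2\delta)\right\rceil$ with no endpoint patching needed).

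The radius half, however, is a genuine gap, as you yourself concede: your count yields only $r \leq 2n/\delta - 1$, and the missing factor of $2$ cannot be recovered by the kind of ``region-growing or layered argument'' you gesture at. Concretely, \emph{any} argument that uses only the existence of a geodesic of length $r$ must fail: take independent sets $B_0,\dots,B_r$ of size about $\delta/2$, join consecutive sets completely, and enlarge $B_1$ and $B_{r-1}$ to restore minimum degree $\delta$. This graph is triangle-free, has minimum degree $\delta$, contains a geodesic of length $r$, and has only about $\delta r/2$ vertices; moreover every vertex off that geodesic lies in exactly two of the sets $N(u_i)$, so no refinement of the bookkeeping can make counted vertices ``private to a single path vertex.'' The same obstruction defeats BFS layering from the centre: a vertex of $N_i$ together with its parent in $N_{i-1}$ only shows that every four consecutive layers contain at least $2\delta$ vertices, i.e.\ again $n \gtrsim \delta r/2$. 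The idea you are missing is that centrality is a property of \emph{every} vertex, not just $c$: the midpoint $m$ of the $c$--$z$ geodesic $P$ also has eccentricity at least $r$, so there is a vertex $w$ with $d(m,w) \geq r$, and every vertex of an $m$--$w$ geodesic at distance more than $\lceil r/2\rceil + 2$ from $m$ is automatically at distance at least $3$ from \emph{all} of $P$ (since $d(q_t,u_i) \geq t - |i - \lceil r/2\rceil|$). This produces a second long stretch of path on which the disjoint-neighbourhood count applies, and the further analysis of how such auxiliary geodesics can run close to one another --- which is where the additive constant $12$ is spent --- is the engine of the proof in \cite{erdHos1989radius}. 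Without some use of the eccentricity of interior vertices of $P$, the coefficient $1/\delta$ is provably out of reach, so your proposal as written cannot be completed along the lines you indicate.
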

	The radius bound of Theorem \ref{thm:intro_eppt_any} was improved in \cite{kim2012maximum} by Kim, Rho, Song and Hwang. 
	The bounds in Theorem \ref{thm:intro_eppt_any} can be re-arranged to bound $n$ by roughly
	\begin{align}
		n \geq d\left( \frac{\delta + 1}{3} \right) + \mathcal{O}(\delta) \text{ and } n \geq 2r\left( \frac{\delta + 1}{3} \right) - \mathcal{O}(\delta),
	\end{align}
	and similarly from Theorem \ref{thm:intro_eppt_trianglefree} we get roughly
	\begin{align}
		n \geq d \left( \frac{2\delta}{4} \right) + \mathcal{O}(\delta) \text{ and } n \geq 2r \left( \frac{2\delta}{4} \right) - \mathcal{O}(\delta).
	\end{align}
	The radius and diameter of $C_4$-free graphs was also bounded in \cite{erdHos1989radius}.
	The bounds are very close to being sharp for infinitely many values of $\delta$.
	These bounds can be re-arranged to yield the following:
	\begin{align}
		\label{eqn:c_4_free_erdos}
		n \geq d \left( \frac{\delta^2 - 2\lfloor \frac{\delta}{2} \rfloor + 1}{5} \right) \text{ and } n \geq 2r \left( \frac{\delta^2 - 2\lfloor \frac{\delta}{2} \rfloor + 1}{5} \right).
	\end{align}
	
	In the same vein, Alochukwu and Dankelmann bound the diameter and radius of graphs with girth at least 6, as well as $\{C_4, C_5\}$-free graphs in \cite{alochukwu2021distances}.
	\begin{thm}\textup{\cite{alochukwu2021distances}}
		\label{thm:intro_aa_girth6}
		Let $G$ be a graph of order $n$, diameter $d$, minimum degree $\delta$ and girth at least 6. Then
		\begin{align*}
			d \leq \frac{3n}{\delta^2 -\delta + 1} - 1 \text{ and } r \leq \frac{3n}{2(\delta^2 -\delta + 1)} + 10.
		\end{align*}
	\end{thm}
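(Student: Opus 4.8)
The plan is to prove Theorem~\ref{thm:intro_aa_girth6} by a neighborhood-counting argument along a diametral (respectively radial) geodesic, which is the standard method in this line of work. First I would fix a shortest path $P = v_0 v_1 \cdots v_d$ realizing the diameter $d$, and consider the vertices $v_i$ with $i \equiv 0 \pmod{3}$, say $v_0, v_3, v_6, \ldots$, giving roughly $d/3$ such ``anchor'' vertices. The key structural fact I would exploit is that girth at least $6$ forbids both triangles and $4$-cycles, so the ball $B_1(v_i)$ of radius $1$ around a vertex contains no two vertices joined by an edge other than through $v_i$, and moreover two vertices at distance at least $2$ apart on a geodesic have \emph{disjoint} closed neighborhoods. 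More precisely, girth $\geq 6$ ensures that $B_2(v)\setminus\{v\}$ is a tree branching out from $v$, so $|B_1(v)| \geq 1 + \delta$ and, counting second neighbors without collision, one gets the crucial local lower bound $|B_1(v_i)| \geq \delta^2 - \delta + 1$ once second-neighborhood contributions are included via the open neighborhoods of the neighbors.

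\medskip

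The heart of the argument is to show that the sets $N[v_i]$ for the chosen anchors $i \equiv 0 \pmod 3$ are pairwise disjoint. Here I would use that if $i$ and $j$ are anchors with $|i - j| \geq 3$, then $d(v_i, v_j) = |i-j| \geq 3$ along the geodesic, and any vertex lying in both $N[v_i]$ and $N[v_j]$ would force a short path between $v_i$ and $v_j$ of length at most $2$, contradicting $d(v_i,v_j)\ge 3$. Thus the closed neighborhoods partition off disjointly, and summing $|N[v_i]| \geq \delta^2 - \delta + 1$ over the $\approx d/3$ anchors yields $n \geq \frac{d}{3}(\delta^2 - \delta + 1)$, which rearranges to the stated $d \leq \frac{3n}{\delta^2 - \delta + 1} - 1$ after carefully accounting for the boundary vertices and the ``$-1$''.

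\medskip

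The radius bound should follow similarly but with an extra subtlety that explains the additive constant $+10$: instead of a clean diametral path one works from a central vertex, and the counting along a radial geodesic of length $r$ only captures about half of the comparable structure, giving the leading term $\frac{3n}{2(\delta^2-\delta+1)}$; the slack $+10$ absorbs the vertices near the endpoints and center where the disjointness of neighborhoods can fail. I expect the main obstacle to be precisely this bookkeeping at the ends of the path and the verification that the local bound $|N[v_i]| \geq \delta^2-\delta+1$ holds for \emph{interior} anchors using both forward and backward second-neighbors without double counting --- the girth condition must be invoked carefully to guarantee that the $\delta$ neighbors of $v_i$ each contribute $\delta - 1$ genuinely new second neighbors that are not shared. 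Making the index arithmetic match the floor/ceiling-free clean bound, rather than an off-by-one weaker statement, is the delicate part.
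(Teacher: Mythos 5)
First, a point of order: the paper does not prove this statement at all --- Theorem \ref{thm:intro_aa_girth6} is quoted as background from Alochukwu and Dankelmann \cite{alochukwu2021distances} --- so your proposal must stand on its own, and it contains a genuine gap. The heart of your argument is that sets of size at least $\delta^2-\delta+1$ attached to the anchors $v_0,v_3,v_6,\dots$ are pairwise disjoint, but you conflate balls of radius $1$ and radius $2$. Your disjointness argument (a common element of $N[v_i]$ and $N[v_j]$ gives a $v_i$--$v_j$ path of length at most $2$, contradicting $d(v_i,v_j)\geq 3$) is valid only for the closed neighborhoods $N[v_i]$, and those have size $\delta+1$, not $\delta^2-\delta+1$; with them your count reproduces only the girth-free bound $n\gtrsim \frac{d}{3}(\delta+1)$ of Theorem \ref{thm:intro_eppt_any}. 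The quantity $\delta^2-\delta+1=1+(\delta-1)+(\delta-1)^2$ forces you to count vertices at distance $2$ from each anchor, and radius-$2$ sets around path vertices three apart are genuinely \emph{not} disjoint in a girth-$6$ graph: if $x\notin P$ is adjacent to $v_i$ and $z\notin P$ is a common neighbor of $x$ and $v_{i+3}$, then $v_i,x,z,v_{i+3},v_{i+2},v_{i+1}$ is a $6$-cycle, which girth $\geq 6$ permits, and this $x$ lies within distance $2$ of both anchors. So no disjointness statement of the kind you need can hold; repaired naively (radius-$2$ balls with anchors spaced $5$ apart) your method only gives $d\lesssim 5n/(\delta^2+1)$, which is weaker than the theorem.

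A correct proof of this type needs a multiplicity bound in place of disjointness. For \emph{every} path vertex $v_i$ (not every third one) take the forward-pruned second ball $U_i$ consisting of $v_i$, its neighbors other than $v_{i+1}$, and the neighbors of those vertices other than $v_i$; girth $\geq 5$ already gives $|U_i|\geq 1+(\delta-1)+(\delta-1)^2=\delta^2-\delta+1$. One then shows that every vertex of $G$ lies in at most $3$ of the sets $U_0,\dots,U_d$: membership in $U_j$ forces $d(x,v_j)\leq 2$, so the geodesic property confines the eligible indices $j$ to a window of width five around the level of $x$, and the triangles, $4$-cycles and (crucially) $5$-cycles forbidden by girth $\geq 6$, together with the pruning of $v_{i+1}$, eliminate all but three indices in that window. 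Summing gives $3n\geq\sum_{i=0}^{d}|U_i|\geq (d+1)(\delta^2-\delta+1)$, which is exactly $d\leq \frac{3n}{\delta^2-\delta+1}-1$; note this is where girth $6$, rather than merely girth $5$, actually enters, a distinction your sketch never uses. The radius statement then follows by running the same count along geodesics from a central vertex, with the additive constant absorbing boundary effects; that part of your sketch is fine in spirit but inherits the broken disjointness claim.
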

	The bounds in Theorem \ref{thm:intro_aa_girth6} are very nearly sharp for infinitely many values of $\delta$.
	Again, we can interpret this result as a bound on $n$ to get roughly
	\begin{align}
		n \geq d\left( \frac{2\delta^2 - 2\delta + 2}{6} \right) + \mathcal{O}(\delta^2) \text{ and } n \geq 2r\left( \frac{2\delta^2 - 2\delta + 2}{6} \right) - \mathcal{O}(\delta^2).
	\end{align}
	Focusing on a radius bound, Dvor\'{a}k, Hintum, Shaw and Tiba provide the most general result so far of this kind in \cite{dvorak2022radius}. 
	\begin{thm}\textup{\cite{dvorak2022radius}}
		\label{thm:intro}
		Let $G$ have even girth $g = 2k$ and radius $r$. Then
		\begin{align*}
			r \leq \frac{nk(\delta - 2)}{2((\delta - 1)^k - 1)} + 3k.
		\end{align*} 
	\end{thm}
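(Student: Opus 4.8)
The plan is to bound $n$ from below by packing many copies of the even-girth Moore tree along a shortest path realising the radius. The building block is the even Moore bound applied locally: since $g=2k$, every ball of radius $k-1$ is an induced tree (a shorter cycle would have length $<2k$), so for any edge $xy$ the vertices within distance $k-1$ of $\{x,y\}$ split, according to whether a vertex is strictly closer to $x$ or to $y$, into two subtrees $X$ and $Y$ with $|X|,|Y|\geq \beta$, where $\beta:=\sum_{i=0}^{k-1}(\delta-1)^i=\frac{(\delta-1)^k-1}{\delta-2}$. The girth hypothesis is exactly what makes this split work: a vertex at distance $\leq k-1$ that were equidistant from both ends of an edge would close up a cycle of length $<2k$, and the same argument shows every vertex in a non-$y$ subtree at $x$ is genuinely closer to $x$, so each of $X,Y$ expands to full depth and the count $\beta$ is attained.

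I would then fix a center $c$ (a vertex of minimum eccentricity $r$) and a geodesic $P=x_0x_1\cdots x_r$ from $c$ to a vertex at distance $r$, and attach to each edge $x_ix_{i+1}$ the two subtrees $X_i$ (closer to $x_i$) and $Y_i$ (closer to $x_{i+1}$), each of size $\geq\beta$. Summing over the $r$ edges gives $\sum_{i}(|X_i|+|Y_i|)\geq 2r\beta$. To convert this into a bound on $n$ I would bound, for each vertex $w$, the number of indices charging it: writing $f(i)=d(x_i,w)$, the fact that $P$ is a geodesic from $c$ gives $f(i)\geq|f(0)-i|$, so $w$ is charged only when $f(i)\leq k-1$, i.e. for $i$ in a window of length $\leq 2k$ about $f(0)=d(c,w)$, with each index contributing at most one membership. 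Hence $\sum_i(|X_i|+|Y_i|)\leq 2kn$, and combining the two estimates yields the clean bound $r\leq \frac{kn}{\beta}=\frac{kn(\delta-2)}{(\delta-1)^k-1}$.

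The hard part — and the entire content of the theorem — is the factor of two: the target coefficient is $\frac{k}{2\beta}$, exactly half of what this counting delivers, because a typical vertex sits on both the descending and the ascending arm of its profile $f$ and is therefore charged about $2k$ times rather than $k$. Closing this gap is the main obstacle, and it is where the \emph{minimality} of the eccentricity of $c$ must be used, not merely the length of a single geodesic. I expect the resolution to split, at least morally, according to how the diameter compares to $r$: in the ``tube'' regime, where the diameter is close to $2r$, one packs disjoint Moore double-trees along an effectively length-$2r$ geodesic (spacing $2k$, each of size $2\beta$, disjointness from the additivity of distances away from $c$); in the ``round'' regime, where the diameter is close to $r$, the degree–girth expansion forced by centrality must supply a second disjoint family of $\beta$-sized trees. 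Unifying these two mechanisms into one estimate is the crux; by contrast the boundary effects near $x_0$ and $x_r$, where a subtree can fail to reach full depth $k-1$, are routine and should account for the additive slack $+3k$.
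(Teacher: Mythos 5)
A preliminary remark: the paper under review does not prove this statement at all --- it is quoted from \cite{dvorak2022radius} as background --- so there is no internal proof to compare against, and your argument must stand on its own. It does not, and your own text says so. Your first two paragraphs are sound as far as they go: with girth $2k$ every ball of radius $k-1$ is a tree, the two sides $X_i,Y_i$ of an edge $x_ix_{i+1}$ on the geodesic each have size at least $\beta=\frac{(\delta-1)^k-1}{\delta-2}$, and the window argument (each vertex charged by at most $2k$ indices) correctly gives $2r\beta\leq\sum_i\bigl(|X_i|+|Y_i|\bigr)\leq 2kn$, hence $r\leq \frac{kn(\delta-2)}{(\delta-1)^k-1}$. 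But the theorem asserts $r\leq \frac{nk(\delta-2)}{2((\delta-1)^k-1)}+3k$, so your conclusion is weaker by exactly the factor of $2$ that, as you yourself note, constitutes the entire content of the result: without it, the bound is just the one a single geodesic gives, i.e.\ essentially a diameter bound, and the theorem is specifically a \emph{radius} bound.

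The third paragraph, where that factor of $2$ would have to be earned, is a plan rather than a proof, and neither of its two mechanisms is viable as stated. The ``tube regime'' presupposes a geodesic of length close to $2r$, which need not exist: the diameter can equal the radius (an even cycle $C_{2m}$ has $r=d=m$), and the even cycle is precisely the tight example where $n=2r$ and the second factor comes from the \emph{other side} of the cycle, not from a longer geodesic. The ``round regime'' claim --- that centrality of $c$ ``must supply a second disjoint family of $\beta$-sized trees'' --- is exactly the missing lemma: you would need to construct, for each index $i$ (or most of them), a second tree of size $\beta$ disjoint from all sets already counted, and you give no construction and no disjointness argument; minimality of the eccentricity of $c$ is invoked but never enters any verifiable step. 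So there is a genuine gap: what you prove is a factor-of-$2$ weaker inequality, and the idea needed to upgrade it is named as ``the crux'' but not supplied.
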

	Notice that this bound is approximately $r \leq \frac{ng}{2M}$, where $M$ is the Moore bound for even girth $g$ and minimum degree $\delta$. 
	The bound is very nearly sharp for girths 6, 8 and 12.
	
	Other variations on the themes above have been studied.
	The diameter of a graph was bounded in terms of its order, and the number of different degrees of its vertices in \cite{mukwembi2012note}.
	The diameter has also been bounded in terms of minimum degree and chromatic number in \cite{czabarka2009diameter} and \cite{czabarka2021maximum}.
	The circumference (which is the length of a longest cycle) was bounded below by the minimum degree and girth in \cite{ellingham2000girth}, and both the circumference and girth were bounded by radius and diameter in \cite{qiao2018maximum}.
	
	The rest of the paper is organized as follows.
	In Section \ref{sec:lower_bound}, we bound the order of a graph from below using its equator, girth, and minimum degree. 
	In graphs where the equator is large compared to the diameter ($q \approx 2d$) and radius ($q \approx 4r$), the bound we present in Section \ref{sec:lower_bound} compares favorably against all the above bounds from the literature.
	We also bound the order of $C_4$-free graphs with given equator and minimum degree.
	Section \ref{sec:sharpness} demonstrates that our equator-girth-degree bound is sharp for all combinations of girth and minimum degree for which there exists a Moore graph, and that the $C_4$-free bound is almost sharp.
	The graphs that meet our bound can be though of as `equator-constrained' generalizations of Moore graphs, and we call them \textit{equatorial graphs}.
	We describe the structure of these equatorial graphs in Section \ref{sec:equatorial_structure}, present results concerning the existence of equatorial graphs in Section \ref{sec:extremal_parameters}, and characterise them exactly for some low girth cases in Section \ref{sec:low_girth_char}. 
	We prove that, like Moore graphs, equatorial graphs are regular and every one of their vertices lies on a maximum length isometric cycle.
	Further, every equatorial graph admits a unique partition induced by its isometric cycles.
	
	\section{Definitions and Notation}
	Standard definitions (such as order, diameter, connectivity and so on) can be found in \cite{DiestelIII} and \cite{BondyMurtyGT}.
	For all definitions here, consider a connected graph $G = (V,E)$.
	A graph is $C_4$-\textit{free} if it does not contain any 4-cycle (induced or otherwise).
	We use the convention that $n = |V|$ is the order of $G$ and $\delta$ is the minimum degree of $G$.
	The \textit{girth} of $G$ is the length of a shortest cycle in $G$.
	The \textit{radius} and \textit{diameter} of $G$ are denoted $\rad(G)$ and $\diam(G)$ respectively.
	We use the convention that an acyclic graph has girth $\infty$ and equator $0$.
	We will often let $r, d, g$ and $q$ stand for radius, diameter, girth and equator, respectively.
	Thus, a maximum length isometric cycle of $G$ will be called an \textit{isometric} $q$-cycle.
	Further, we will usually let $k = \lceil \frac{g}{2} \rceil - 1$.
	The \textit{disk of radius} $i$ around vertex $u$ is $\dd_i(u) = \{v\in V: d(u,v) \leq i\}$.
	For an edge $e=uv$, denote by $\dd_i(e) = \dd_i(u) \cup \dd_i(v)$ the vertices within distance $i$ of $e$.
	Let the $i$-\textit{degree} of $v$ be $d_i(v) = |\dd_i(v)|$, and similarly denote $d_i(e) = |\dd_i(e)|$.
	The degree of a vertex $v$ is $d(v)$, and is not the same as the 1-degree $d_1(v) = 1 + d(v)$.
	For positive integers girth $g \geq 3$, degree $\delta \geq 2$, and $k = \lceil \frac{g}{2} \rceil - 1$, denote the \textit{Moore Bound} by
	\begin{align*}
		M(\delta, g) = 
		\begin{cases}
			\displaystyle 1 + \sum_{i=0}^{k-1} \delta(\delta - 1)^i = \frac{\delta(\delta - 1)^k - 2}{\delta - 2} & \text{ when } g \text{ is odd}.\\
			\displaystyle 2 + \sum_{i=1}^{k} 2(\delta - 1)^i = \frac{2(\delta - 1)^{k+1} - 2}{\delta - 2} & \text{ when } g \text{ is even}.
		\end{cases}
	\end{align*}
	A graph with girth $g$, minimum degree $\delta$ and order $M(\delta, g)$ is called a \textit{Moore graph}.
	If a graph has minimum degree $\delta$ and girth $g$, it is a $(\delta+, g)$-\textit{graph}, and if it is regular we call it a $(\delta, g)$-\textit{graph}.
	A $(\delta, g)$-\textit{cage} is a $(\delta, g)$-graph of minimum order, and we denote by $C(\delta, g)$ the order of a $(\delta, g)$-cage.
	A graph with minimum degree $\delta$, girth $g$ and equator $q$ is a $(\delta+, g, q)$-\textit{graph}, and if it is $\delta$-regular, it is a $(\delta, g, q)$-\textit{graph}.
	
	\section{The Lower Bound}
	\label{sec:lower_bound}
	In this section, we prove a lower bound on the order of a graph in terms of its girth, minimum degree, and the length of a longest isometric cycle.
	By an application of the same technique, we bound the order of a $C_4$-free graph using its minimum degree and equator.
	The bounds we prove here are similar to the bounds mentioned in the introduction --- with the equator $q$ replacing either the diameter $d$ or twice the radius $2r$.
	
	Remember we denote $k = \lceil \frac{g}{2} \rceil - 1$.
	Note that we can bound the minimum $k$-degree of a vertex / edge if we know the minimum degree and the girth (see Remark \ref{rem:k_degree}).
	To do so, consider a breadth-first search tree rooted at a single vertex for odd girth, and rooted at an edge for even girth (see Figure \ref{fig:bfs_tree}).
	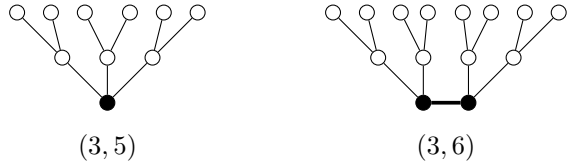
\begin{figure}[h]
		\centering
		\begin{tikzpicture}[scale=0.6, inner sep= 0.7mm]
	\begin{pgfonlayer}{nodelayer}
		\node [style=blackvertex] (0) at (-3, 0) {};
		\node [style=whitevertex] (1) at (-4, 1) {};
		\node [style=whitevertex] (2) at (-3, 1) {};
		\node [style=whitevertex] (3) at (-2, 1) {};
		\node [style=whitevertex] (4) at (-4.25, 2) {};
		\node [style=whitevertex] (5) at (-5, 2) {};
		\node [style=whitevertex] (6) at (-3.5, 2) {};
		\node [style=whitevertex] (7) at (-2.5, 2) {};
		\node [style=whitevertex] (8) at (-1.75, 2) {};
		\node [style=whitevertex] (9) at (-1, 2) {};
		\node [style=blackvertex] (10) at (4, 0) {};
		\node [style=blackvertex] (11) at (5, 0) {};
		\node [style=whitevertex] (12) at (4, 1) {};
		\node [style=whitevertex] (13) at (3, 1) {};
		\node [style=whitevertex] (14) at (5, 1) {};
		\node [style=whitevertex] (15) at (6, 1) {};
		\node [style=whitevertex] (16) at (4.1, 2) {};
		\node [style=whitevertex] (17) at (2, 2) {};
		\node [style=whitevertex] (18) at (3.5, 2) {};
		\node [style=whitevertex] (19) at (2.75, 2) {};
		\node [style=whitevertex] (20) at (4.875, 2) {};
		\node [style=whitevertex] (21) at (7, 2) {};
		\node [style=whitevertex] (22) at (6.25, 2) {};
		\node [style=whitevertex] (23) at (5.5, 2) {};
		\node [style=none] (24) at (-3, -1) {$(3,5)$};
		\node [style=none] (25) at (4.5, -1) {$(3,6)$};
	\end{pgfonlayer}
	\begin{pgfonlayer}{edgelayer}
		\draw (1) to (0);
		\draw (2) to (0);
		\draw (0) to (3);
		\draw (5) to (1);
		\draw (4) to (1);
		\draw (6) to (2);
		\draw (7) to (2);
		\draw (8) to (3);
		\draw (3) to (9);
		\draw [style=thick] (10) to (11);
		\draw (12) to (10);
		\draw (13) to (10);
		\draw (14) to (11);
		\draw (15) to (11);
		\draw (17) to (13);
		\draw (19) to (13);
		\draw (18) to (12);
		\draw (16) to (12);
		\draw (20) to (14);
		\draw (22) to (15);
		\draw (15) to (21);
		\draw (23) to (14);
	\end{pgfonlayer}
\end{tikzpicture}
		\caption{Left is a breadth-first search tree rooted at the bold vertex in a $(3,5)$-graph, and right is one rooted at the bold edge in a $(3,6)$-graph.}
		\label{fig:bfs_tree}
	\end{figure} 
	
	\begin{rem}
		\label{rem:k_degree}
		Let $G=(V,E)$ be a graph with girth $g$ and minimum degree $\delta$, and let $k = \lceil \frac{g}{2} \rceil - 1$.
		If $g$ is odd, then $d_k(v) \geq M(\delta, g)$ for every vertex $v$.
		If $g$ is even, then $d_k(e) \geq M(\delta, g)$ for every edge $e$.
	\end{rem} 
	
	We establish the lower bound for graphs of odd girth by counting, in two ways, the number of pairs $(v, D)$ where $v$ is some vertex of the graph, and $D$ is the disk of radius $k$ centered at some vertex of an isometric $q$-cycle.
	
	\begin{lem}
		\label{lem:low_bound_odd}
		Let $G=(V,E)$ be a graph of order $n$, equator $q$, odd girth $g = 2k+1$ and minimum degree $\delta$. If $q > 6k + 3$, then
		\begin{align*}
			n \geq \frac{q}{g}M(\delta, g).
		\end{align*}
	\end{lem}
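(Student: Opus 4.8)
The plan is to prove the bound by a double count. I would fix an isometric $q$-cycle $C = c_0 c_1 \cdots c_{q-1}$, and for each of its vertices $c_i$ let $D_i = \dd_k(c_i)$ be the disk of radius $k$ about $c_i$. Then I count, in two ways, the number $N$ of incident pairs $(v, D_i)$ with $v \in D_i$, where $v \in V$ and $i \in \{0, \dots, q-1\}$.

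Summing over the $q$ disks first: since $g = 2k+1$ is odd, Remark \ref{rem:k_degree} gives $|D_i| = d_k(c_i) \geq M(\delta, g)$ for every $i$, so $N = \sum_{i=0}^{q-1} |D_i| \geq q\, M(\delta, g)$. Summing over vertices instead, $N = \sum_{v \in V} |S_v|$, where $S_v = \{\, i : d(v, c_i) \leq k \,\}$ records which disks contain $v$. The whole bound then reduces to the single claim that every vertex lies in at most $g$ of the disks, i.e.\ $|S_v| \leq g$ for all $v$; granting this, $q\, M(\delta, g) \leq N \leq n g$, which rearranges to $n \geq \frac{q}{g} M(\delta, g)$.

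The heart of the argument, and the step where the hypothesis $q > 6k+3$ is essential, is proving $|S_v| \leq 2k+1 = g$. First I would use isometry: if $i, j \in S_v$ then concatenating geodesics $c_i \to v \to c_j$ gives a walk of length at most $2k$, so $d(c_i, c_j) \leq 2k$, and because $C$ is isometric this graph distance equals the cyclic distance $d_C(c_i, c_j)$. Thus the indices in $S_v$ are pairwise within cyclic distance $2k$ on a cycle of length $q$, and I would then show such a set must lie inside a single arc spanning at most $2k+1$ consecutive vertices. Writing the cyclic gaps between consecutive elements of $S_v$ and letting $g^*$ denote the largest, the dichotomy is: if $g^* \leq 2k$ then every gap is at most $2k$, so starting from any element the partial sums of the gaps increase in steps of at most $2k$ and must land in the interval $(2k,\, q - 2k)$ --- which has length $q - 4k$ and is longer than a single step precisely because $q > 6k$ --- forcing a pair at cyclic distance exceeding $2k$, a contradiction; otherwise $g^* > 2k$, and since the two endpoints of the complementary arc are themselves within distance $2k$ we must have $g^* \geq q - 2k$, so all of $S_v$ lies in an arc of length at most $2k$ and hence $|S_v| \leq 2k+1$.

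The main obstacle is exactly this combinatorial claim: pairwise small distances do not by themselves confine a set to a short arc, since a short cycle can carry a spread-out set that is nonetheless pairwise close. The argument therefore genuinely needs $q$ large compared with $k$, and the role of $q > 6k+3$ (equivalently $q > 3g$) is precisely to prevent $S_v$ from wrapping around the cycle, so that it behaves like a set of points on a path, for which the count $2k+1$ is immediate. Everything else --- the two estimates for $N$ and the final rearrangement --- is routine.
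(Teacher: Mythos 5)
Your proposal is correct and takes essentially the same approach as the paper: a double count of (vertex, disk) pairs, bounding each disk $\dd_k(u_i)$ below by $M(\delta,g)$ via Remark \ref{rem:k_degree} and using isometry of the cycle to show each vertex lies in at most $2k+1 = g$ disks. The only difference is that you spell out the arc-confinement step (pairwise cyclic distance $\leq 2k$ forces the index set into a single short arc, which is exactly where $q > 6k+3$ is needed), a detail the paper asserts without proof.
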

	
	\begin{proof}
		Let $C = u_0, u_1, \dots, u_{q-1}$ be an isometric $q$-cycle, and note by Remark \ref{rem:k_degree} that $d_k(u_i) \geq M(\delta, g)$ for all $u_i \in C$.
		Since $C$ is an isometric cycle, we have that $\dd_k(u_i) \cap \dd_k(u_j) \neq \emptyset$ only if $d_C(u_i, u_j) \leq 2k$.
		Thus each vertex $v$ of $G$ belongs to at most $2k+1 = g$ disks of the form $\dd_k(u_i)$ ($0\leq i \leq q-1$).
		Since each disk has at least $M(\delta, g)$ vertices, and each vertex is counted in at most $g$ disks, we have
		\begin{align*}
			n &\geq \left| \bigcup_{i=0}^{q-1} \dd_k(u_i) \right| \geq \frac{1}{g} \sum_{i=0}^{q-1} |\dd_k(u_i)| \geq \frac{q}{g} M(\delta, g),
		\end{align*}
		which completes the proof.
	\end{proof}
	
	The proof in the even girth case is similar: we count the number of vertices appearing in the disks $\dd_k(e)$, where $e$ is an edge of an isometric $q$-cycle.
	
	\begin{lem}
		\label{lem:low_bound_even}
		Let $G=(V,E)$ be a graph of order $n$, equator $q$, even girth $g = 2k+2$ and minimum degree $\delta$. If $q > 6k + 3$, then
		\begin{align}
			n \geq \frac{q}{g}M(\delta, g).
		\end{align}
	\end{lem}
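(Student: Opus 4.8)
The plan is to mirror the proof of Lemma \ref{lem:low_bound_odd} almost verbatim, replacing the vertex-centered disks by the edge-centered disks furnished by the even-girth half of Remark \ref{rem:k_degree}. Let $C = u_0, u_1, \dots, u_{q-1}$ be an isometric $q$-cycle and let $e_i = u_i u_{i+1}$ (indices mod $q$) be its $q$ edges. Remark \ref{rem:k_degree} gives $d_k(e_i) = |\dd_k(e_i)| \geq M(\delta, g)$ for each edge $e_i$. I would then double count incidences between the vertices of $G$ and these $q$ disks: if every vertex of $G$ lies in at most $g = 2k+2$ of the disks $\dd_k(e_i)$, then
\[
n \geq \left| \bigcup_{i=0}^{q-1} \dd_k(e_i) \right| \geq \frac{1}{g} \sum_{i=0}^{q-1} |\dd_k(e_i)| \geq \frac{q}{g} M(\delta, g),
\]
which is the desired bound. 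So the whole argument reduces to bounding, by $g$, the number of edge-disks containing a fixed vertex.

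To set this up, fix a vertex $v$ and let $J = \{\, j : d(v, u_j) \leq k \,\}$ be the set of indices of cycle vertices within distance $k$ of $v$. Then $v \in \dd_k(e_i)$ precisely when $i \in J$ or $i+1 \in J$, i.e.\ when $e_i$ has an endpoint whose index lies in $J$. Any two indices $j, j' \in J$ satisfy $d(u_j, u_{j'}) \leq 2k$, and since $C$ is isometric this transfers to the cycle metric: $d_C(u_j, u_{j'}) \leq 2k$. The key claim I would isolate is that $J$ is contained in a block of at most $2k+1$ consecutive indices. Granting this, the edges $e_i$ having an endpoint indexed in $J$ range over at most $(2k+1) + 1 = 2k + 2 = g$ consecutive values of $i$, so $v$ lies in at most $g$ of the disks, exactly as needed.

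The main obstacle is the arc claim, and this is exactly where the hypothesis $q > 6k + 3$ is spent; without it, two vertices of $J$ could be close via the short way around while the corresponding edges wrap around the cycle and inflate the count. I would argue as follows. Since $q > 6k+3$ forces $2k < q/2$, the vertices $u_j$ ($j \in J$) all lie in the radius-$2k$ arc about any fixed $u_{j_0}$ with $j_0 \in J$, an arc of length $4k < q$. Let $u_L$ and $u_R$ be the extreme members of $J$ to either side of $u_{j_0}$ along this arc, at arc-distances $d_L, d_R \leq 2k$. If $d_L + d_R \leq 2k$, then $J$ sits in an arc of length at most $2k$, hence in at most $2k+1$ consecutive positions, and we are done. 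Otherwise $d_L + d_R > 2k$, so the only way to keep $d_C(u_L, u_R) \leq 2k$ is through the complementary arc, forcing $d_L + d_R \geq q - 2k$; combined with $d_L + d_R \leq 4k$ this yields $q \leq 6k$, contradicting $q > 6k+3$. Thus $J$ always occupies at most $2k+1$ consecutive indices, which completes the per-vertex count and hence the proof.
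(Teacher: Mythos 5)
Your proof is correct and takes essentially the same approach as the paper's: double-count vertex--disk incidences over the edge-disks $\dd_k(e_i)$ of an isometric $q$-cycle, bound each $|\dd_k(e_i)|$ below by $M(\delta,g)$ via Remark \ref{rem:k_degree}, and conclude by showing each vertex lies in at most $g = 2k+2$ of these disks. The only difference is that you explicitly prove the arc claim (where the hypothesis $q > 6k+3$ is spent), a step the paper compresses into the single sentence that disks $\dd_k(e_i)$ and $\dd_k(e_j)$ can intersect only when $d_C(e_i,e_j) \leq 2k$.
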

	
	\begin{proof}
		Let $C = u_0, \dots, u_{q-1}$ be an isometric $q$-cycle, and let $e_i = u_iu_{i+1}$ (subscripts taken mod $q$, so $e_{q-1} = u_{q-1}u_0$).
		As $C$ is isometric, we have that $\dd_k(e_i) \cap \dd_k(e_j) = \emptyset$ only if $d_C(e_i, e_j) \leq 2k$. 
		Thus $v$ belongs to at most $2k+2 = g$ disks of the form $\dd_k(e_i)$.
		By Remark \ref{rem:k_degree}, the $k$-degree of each edge $e_i$ is bounded by $d_k(e_i) \geq M(\delta, g)$. 
		Counting the vertices in each disk yields 
		\begin{align*}
			n &\geq \left| \bigcup_{i=0}^{q-1} \dd_k(e_i) \right| \geq \frac{1}{g} \sum_{i=0}^{q-1} |\dd_k(e_i)| \geq \frac{q}{g} M(\delta, g),
		\end{align*}
		which completes the proof.
	\end{proof}
	
	Combine Lemmas \ref{lem:low_bound_odd} and \ref{lem:low_bound_even} to get a simple unified bound that holds for all $(\delta+, g, q)$-graphs.
	
	\begin{thm}
		\label{thm:lower_bound}
		Let $G$ be a graph with order $n$, girth $g$, $k = \lceil\frac{g}{2}\rceil - 1$, minimum degree $\delta$ and equator $q > 6k + 3$. Then
		 \begin{align}
		 		\label{eqn:lower_bound}
		 		n \geq \frac{q}{g}M(\delta, g).
		 \end{align}
	\end{thm}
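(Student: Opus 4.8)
The plan is to obtain Theorem~\ref{thm:lower_bound} as an immediate consequence of the two preceding lemmas, via a case split on the parity of the girth. First I would note that the hypothesis $q > 6k+3$ forces $q > 0$ (since $k = \lceil \frac{g}{2}\rceil - 1 \geq 1$ for every $g \geq 3$), so $G$ possesses an isometric $q$-cycle and in particular contains a cycle. This rules out the degenerate acyclic case (where $g = \infty$ and $q = 0$ by convention), and guarantees that $g$ is a finite integer at least $3$, to which exactly one of the two parity conventions applies.

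Next I would split on whether $g$ is odd or even. If $g$ is odd, then $g = 2k+1$ with $k = \lceil \frac{g}{2}\rceil - 1$, the hypothesis $q > 6k+3$ is precisely the hypothesis of Lemma~\ref{lem:low_bound_odd}, and that lemma delivers $n \geq \frac{q}{g} M(\delta, g)$. If instead $g$ is even, then $g = 2k+2$ with the same $k = \lceil \frac{g}{2}\rceil - 1$, the threshold $q > 6k+3$ again matches the hypothesis of Lemma~\ref{lem:low_bound_even}, and that lemma yields the identical bound. Since the conclusion is the same expression $\frac{q}{g}M(\delta,g)$ in both cases, combining them proves the theorem for all girths.

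There is no genuine obstacle here, as all the counting work has been discharged in Lemmas~\ref{lem:low_bound_odd} and~\ref{lem:low_bound_even}; the only point requiring care is bookkeeping. Specifically, I would verify that the single definition $k = \lceil \frac{g}{2}\rceil - 1$ reconciles the two parity-dependent descriptions used in the lemmas --- namely $k = \frac{g-1}{2}$ when $g$ is odd and $k = \frac{g}{2} - 1$ when $g$ is even --- so that the hypothesis $q > 6k+3$ and the Moore bound $M(\delta,g)$ are interpreted consistently across the two cases. Once this identification is checked, the two lemmas partition all admissible girths and the unified bound follows.
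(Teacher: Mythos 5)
Your proposal is correct and matches the paper exactly: the paper derives Theorem~\ref{thm:lower_bound} simply by combining Lemma~\ref{lem:low_bound_odd} (odd girth $g = 2k+1$) and Lemma~\ref{lem:low_bound_even} (even girth $g = 2k+2$), which is precisely your parity case split. Your additional bookkeeping --- checking that $k = \lceil \frac{g}{2}\rceil - 1$ agrees with both parity conventions and that $q > 6k+3$ rules out the acyclic case --- is sound and only makes explicit what the paper leaves implicit.
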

	
	The bound in Theorem \ref{thm:lower_bound} for $g=3$ is essentially the bound for a generic graph with no girth restriction (as every graph with a cycle has girth at least 3). 
	In this case, we have $n\geq \frac{q(\delta + 1)}{3}$.
	For a graph $G$ with $\eqt(G) \approx 2\diam(G)$, Theorem \ref{thm:lower_bound} roughly doubles the lower bound of Theorem \ref{thm:intro_eppt_any} in \cite{erdHos1989radius}.
	Similarly, when $\eqt(G) \approx 4\rad(G)$, the bound provided by Theorem \ref{thm:lower_bound} is about twice the bound in Theorem \ref{thm:intro_eppt_any}.
	
	If $G$ is bipartite, then $g\geq 4$, and our bound simplifies to $n \geq \frac{q\delta}{2}$. 
	Again, this bound is roughly double the existing bound of Theorem \ref{thm:intro_eppt_trianglefree} when $q \approx 2d$ and $q \approx 4r$.
	
	In the girth 5 case, Theorem \ref{thm:lower_bound} yields $n\geq \frac{q}{5}(\delta^2 + 1)$. 
	Although the author does not know of any lower bounds on the order of $(\delta+, 5)$-graphs in terms of diameter $d$, it is straightforward to get a rough diameter bound $n \geq \frac{d}{5}(\delta^2 + 1) + \mathcal{O}(\delta^2)$.
	To see this, consider a length $d$ geodesic $u_0, u_1, \dots, u_d$, and use the same counting argument as Lemma \ref{lem:low_bound_odd}.
	The $\mathcal{O}(\delta^2)$ term comes from accounting for extra / under-counted vertices at the start and end of the length $d$ geodesic.  
	Once more, the equator bound is about double this rough diameter bound for $\delta$ sufficiently large and $q \approx 2d$.
	
	For girth 6, our bound in \ref{thm:lower_bound} reduces to $n \geq \frac{q}{3}(\delta^2 - \delta + 1)$, which is again roughly double the diameter bound found in Theorem \ref{thm:intro_aa_girth6} \cite{alochukwu2021distances}.
	
	Using the same strategy as the proof of Lemma \ref{lem:low_bound_odd}, we bound the order of $C_4$-free graphs in Proposition \ref{prop:c4_lower_bound}.
	
	\begin{prop}
		\label{prop:c4_lower_bound}
		Let $G$ be a graph that does not contain any 4-cycles and has order $n$, minimum degree $\delta$ and equator $q > 15$. Then
		\begin{align*}
			n \geq \frac{q}{5}\left(\delta^2 - 2\left\lfloor \frac{\delta}{2} \right\rfloor + 1  \right).
		\end{align*}
	\end{prop}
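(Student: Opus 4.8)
The plan is to mimic the counting argument of Lemma~\ref{lem:low_bound_odd}, but with a $C_4$-free-specific lower bound on the $k$-degree replacing the Moore-bound estimate. Since $C_4$-free graphs need not have girth~$5$ (triangles are allowed), we take $k=1$ throughout, so the relevant disks are the closed neighbourhoods $\dd_1(u_i)$ of the vertices on an isometric $q$-cycle. The factor $\frac{q}{5}$ then arises exactly as in the girth-$5$ case: I would argue that $\dd_1(u_i)\cap \dd_1(u_j)\neq\emptyset$ only if $d_C(u_i,u_j)\leq 2$, so each vertex of $G$ is counted in at most $5$ of the disks $\dd_1(u_i)$, and hence
\begin{align*}
  n \geq \left|\bigcup_{i=0}^{q-1}\dd_1(u_i)\right| \geq \frac{1}{5}\sum_{i=0}^{q-1}|\dd_1(u_i)| = \frac{1}{5}\sum_{i=0}^{q-1}d_1(u_i).
\end{align*}

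\textbf{The key step} is therefore to establish that in a $C_4$-free graph every vertex $v$ satisfies $d_1(v)\geq \delta^2 - 2\lfloor \tfrac{\delta}{2}\rfloor + 1$, which is the quantity appearing in \eqref{eqn:c_4_free_erdos}. Fix a vertex $v$ of degree $d\geq\delta$ and let $N(v)=\{w_1,\dots,w_d\}$ be its neighbours. The closed neighbourhood $\dd_1(v)$ consists of $v$, the $d$ neighbours, and the second-neighbours reachable through the $w_i$. Because $G$ is $C_4$-free, no two neighbours $w_i,w_j$ can share a common neighbour other than $v$ itself; equivalently, the sets $N(w_i)\setminus(\{v\}\cup N(v))$ are pairwise disjoint. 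Counting these disjoint second-neighbour sets gives the bulk of the $\delta^2$ term, and I would carefully account for the edges lying \emph{inside} $N(v)$ — each such edge reduces the second-neighbour count of two neighbours by one. The $-2\lfloor\frac{\delta}{2}\rfloor$ correction reflects the worst case in which the neighbourhood $N(v)$ carries a maximum matching (which, being $C_4$-free, contains no triangle-sharing pairs), so at most $\lfloor\frac{\delta}{2}\rfloor$ internal edges each cost two second-neighbours; the $+1$ counts $v$ itself.

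\textbf{The main obstacle} I anticipate is making the edge-inside-$N(v)$ bookkeeping tight rather than merely asymptotic: one must verify that the extremal configuration (a near-perfect matching on the neighbourhood, with every neighbour having exactly $\delta-1$ further distinct neighbours) is simultaneously realizable under the $C_4$-free constraint, so that the stated bound $\delta^2 - 2\lfloor\frac{\delta}{2}\rfloor + 1$ is genuinely a lower bound for $d_1(v)$ and not an over-count. Concretely, I would lower-bound $d_1(v)$ by $1 + d + \sum_{i}(|N(w_i)|-1-|N(w_i)\cap N(v)|)$, use the disjointness to drop the intersection-overcounting worry, and then minimize over the number and placement of neighbourhood edges subject to $C_4$-freeness. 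A minor but real technical point is the hypothesis $q>15$: this is exactly the $k=1$ instance of $q>6k+3$, and it guarantees that the $q$ disks $\dd_1(u_i)$ around the isometric cycle overlap in the clean ``window of width $5$'' pattern, so that the bound of at most $5$ disks per vertex is valid all the way around the cycle.
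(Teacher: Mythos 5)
Your counting framework is the right template (it is the same one the paper uses), but there is a genuine error in the radius of the disks, and it makes your stated key step false. In the paper's notation $\dd_1(v)$ is the closed neighbourhood of $v$, so $d_1(v) = 1 + d(v)$; the second-neighbours you describe do not lie in $\dd_1(v)$ but in $\dd_2(v)$. Hence the inequality you set out to prove, $d_1(v) \geq \delta^2 - 2\lfloor \tfrac{\delta}{2} \rfloor + 1$, cannot hold in general (for a $\delta$-regular graph it would read $1+\delta \geq \delta^2 - 2\lfloor \tfrac{\delta}{2} \rfloor + 1$, false for $\delta\geq 3$). The quantity your matching argument actually bounds is $d_2(v)$, and that part of your reasoning is correct --- no two neighbours of $v$ share a second common neighbour, and the edges inside $N(v)$ form a matching of size at most $\lfloor \tfrac{d(v)}{2}\rfloor$, which is precisely the observation from \cite{erdHos1989radius} that the paper invokes. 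But as literally written, with radius-$1$ disks, your method yields only $n \geq \frac{q}{5}\left(1+\delta\right)$, far short of the claim.

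The fix is to run the entire argument with radius-$2$ disks, which is exactly what the paper does (``the same as the proof of Lemma \ref{lem:low_bound_odd} in the special case where $k=2$''). Your justification for taking $k=1$ --- that $C_4$-free graphs may have girth $3$, so the formula $k=\lceil \tfrac{g}{2}\rceil - 1$ forces radius $1$ --- misreads where the two hypotheses enter the proof of Lemma \ref{lem:low_bound_odd}: the girth (here replaced by $C_4$-freeness) is used \emph{only} to lower-bound the size of each disk, whereas the overlap control (each vertex lies in at most $2k+1$ of the disks) uses only isometry of the cycle together with $q > 6k+3$, and is valid for any choice of radius $k$. Taking $k=2$: if $\dd_2(u_i)\cap\dd_2(u_j)\neq\emptyset$ then $d_C(u_i,u_j)\leq 4$, so each vertex of $G$ lies in at most $5$ disks $\dd_2(u_i)$ --- this is where the factor $\frac{q}{5}$ and the hypothesis $q>15 = 6\cdot 2+3$ genuinely come from. (Note your closing claim that $q>15$ is ``the $k=1$ instance of $q>6k+3$'' is also wrong: that instance is $q>9$; the mismatch is another symptom of the radius confusion.) Combining the at-most-$5$-disks count with your (relabelled) bound $d_2(u_i) \geq \delta^2 - 2\lfloor \tfrac{\delta}{2} \rfloor + 1$ gives the proposition.
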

	
	\begin{proof}
		Since $G$ does not contain any 4-cycles (induced or otherwise), $d_2(v) \geq \delta^2 - 2\lfloor \frac{\delta}{2} \rfloor + 1$ (as observed in \cite{erdHos1989radius}).
		To see this, note that no two neighbors of $v$ can have any other neighbor in common, but $N(v)$ may contain a matching of size at most $\lfloor \frac{d(v)}{2} \rfloor$. 
		The rest of the proof is essentially the same as the proof of Lemma \ref{lem:low_bound_odd} in the special case where $k=2$.
	\end{proof}
	
	Again, the bound in Proposition \ref{prop:c4_lower_bound} is, for graphs where with $q \approx 2d$ / $q \approx 4r$, roughly double the bounds in \cite{erdHos1989radius} (see Equation \ref{eqn:c_4_free_erdos}).
	It is interesting to note that if $v$ is a vertex in $C_4$-free graph, then $d_2(v) \geq \delta^2 - 2\lfloor \frac{\delta}{2} \rfloor + 1 = M(5, \delta) - 2\lfloor \frac{\delta}{2} \rfloor$.
	I.e., the minimum $2$-degree in a $C_4$-free graph is nearly as large as the minimum $2$-degree in a $\{C_3, C_4\}$-free graph.
	
	\section{Sharpness}
	\label{sec:sharpness}
	
	We show that the bound of Theorem \ref{thm:lower_bound} is sharp for at least some combinations of girth and minimum degree --- in particular, those for which there exists a Moore graph.
	We also construct, using cages, a larger class of graphs that nearly attain the bound of Theorem \ref{thm:lower_bound} for many combinations of girth and minimum degree.
	We conclude the section by showing that a well known construction is somewhat close to the bound for $C_4$-free graphs, and that we can get very close to the bound of Proposition \ref{prop:c4_lower_bound} when $\delta = 3$.
	
	\begin{thm}
		\label{thm:sharp_moore_parameters}
		Let $j\geq 3$ be an integer, and let $g, \delta$ be positive integers for which there exists a Moore graph of girth $g$ and minimum degree $\delta \geq 3$.
		Then there exists a $\delta$-regular graph $\mathcal{F}(\delta, g, q)$ with girth $g$, equator $q = jg$ and order $n = \frac{q}{g}M(\delta, g)$.
	\end{thm}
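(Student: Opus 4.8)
The plan is to realize $\mathcal{F}(\delta,g,q)$ as a cyclic cover of a Moore graph. Let $H$ be a Moore graph of girth $g$ and degree $\delta$, so $|V(H)| = M(\delta,g)$, and fix a shortest cycle $Z$ of $H$ (necessarily of length exactly $g$, and isometric in $H$). I would choose an edge $e_0 = ab$ of $Z$ with the extra property that $H - e_0$ still has girth $g$; such an edge exists because for $\delta \geq 3$ no single edge of a Moore graph lies on every girth cycle. Orient $e_0$, assign it voltage $1 \in \mathbb{Z}_j$ and give every other edge voltage $0$, and let $\mathcal{F} = \mathcal{F}(\delta,g,jg)$ be the derived $\mathbb{Z}_j$-voltage cover: its vertices are the pairs $(u,t) \in V(H) \times \mathbb{Z}_j$, every edge $uv \neq e_0$ lifts to the $j$ edges $(u,t)(v,t)$, and $e_0 = ab$ lifts to the edges $(a,t)(b,t+1)$. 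Equivalently, $\mathcal{F}$ is $j$ disjoint copies of $H - e_0$ joined cyclically by restoring the missing edge across consecutive copies. Since a covering projection preserves degree and partitions $V(\mathcal{F})$ into $j$ fibres of size $M(\delta,g)$, the graph $\mathcal{F}$ is at once $\delta$-regular of order $n = jM(\delta,g)$.

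Next I would pin down the girth. Any cycle of $\mathcal{F}$ projects to a closed walk of $H$ of the same length and of net voltage $0$; because a cycle has no repeated vertices this projected walk cannot backtrack, so it contains a genuine cycle of $H$ and hence has length at least $g$. Thus $\mathcal{F}$ has girth at least $g$. Conversely, the chosen edge $e_0$ was arranged so that $H - e_0$ retains a $g$-cycle; this cycle has net voltage $0$ and lifts to a $g$-cycle lying inside a single copy of $H - e_0$, so the girth of $\mathcal{F}$ is exactly $g$.

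The heart of the matter is to exhibit an isometric cycle of length $jg$. The girth cycle $Z$ crosses $e_0$ exactly once, hence has net voltage $\pm 1$, a generator of $\mathbb{Z}_j$, so $Z$ lifts to a \emph{single} cycle $\tilde Z$ of length $jg$ winding once through every fibre. I would prove $\tilde Z$ isometric using the standard description of distance in a voltage cover: $d_{\mathcal{F}}\big((u,s),(v,t)\big)$ is the minimum length of a $u$--$v$ walk in $H$ whose net voltage equals $t-s$. Since voltage is carried only by $e_0$, the net voltage of a walk is just its signed number of crossings of $e_0$, and each crossing must pass through the pair $\{a,b\}$. The linchpin is the uniqueness of short geodesics in a Moore graph: for a vertex $z_p$ of $Z$ whose unique $H$-geodesic to $a$ runs through $e_0$, deleting $e_0$ forces any replacement path to have length at least its $Z$-distance, because that path together with the destroyed geodesic would otherwise close up a cycle shorter than $g$. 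Feeding this back through the voltage-distance formula, I would show that for any two vertices of $\tilde Z$ a shortest constrained walk may be taken along $Z$ itself, so their cover distance equals their distance around $\tilde Z$. Verifying that no detour through the remaining vertices of $H$ or through other fibres beats the route along $\tilde Z$ — and handling the bookkeeping of multiple crossings for widely separated vertices — is where the genuine work lies and is the main obstacle.

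Finally I would invoke Theorem \ref{thm:lower_bound}. Once $\tilde Z$ is known to be isometric, the equator of $\mathcal{F}$ is at least $jg$. If the equator were some $q^\ast > jg$, then since $jg \geq 3g \geq 6k+3$ we would have $q^\ast > 6k+3$, so Theorem \ref{thm:lower_bound} applies and forces $n \geq \tfrac{q^\ast}{g}M(\delta,g) > jM(\delta,g) = n$, a contradiction; note that this step only ever has to exclude cycles strictly longer than $jg$, so it remains valid even in the borderline odd-girth case $j = 3$, where $jg = 6k+3$. Hence the equator is exactly $q = jg$, and $\mathcal{F}(\delta,g,jg)$ is a $\delta$-regular graph of girth $g$, equator $q$, and order $n = jM(\delta,g) = \tfrac{q}{g}M(\delta,g)$, as required.
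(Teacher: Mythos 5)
Your construction coincides with the paper's: a $\mathbb{Z}_j$-voltage cover of $H$ with voltage $1$ on a single edge $e_0=ab$ of a girth cycle is precisely $j$ copies of $H-e_0$ joined in a ring by the lifts of $e_0$, which is exactly how the paper builds $\mathcal{F}(\delta,g,q)$. Your verifications of regularity, order and girth are sound and in fact more explicit than the paper's (the paper simply asserts the girth; note that both you and the paper implicitly need the true-but-unproved fact that some girth cycle of $H$ avoids the chosen edge), and your use of Theorem \ref{thm:lower_bound} to cap the equator at $jg$, including the borderline odd-girth case $j=3$, makes explicit a point the paper leaves unsaid.

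However, there is a genuine gap at the heart of the argument: the claim that the lift $\tilde Z$ (equivalently, the cycle formed by the paths $P_i$ and the connector edges) is an \emph{isometric} cycle of length $jg$ is never actually proved. You sketch a route through a voltage-distance formula and ``uniqueness of short geodesics in a Moore graph,'' and then concede that carrying it out ``is where the genuine work lies and is the main obstacle.'' A proof cannot leave its hardest step as an acknowledged obstacle, so as written the attempt is incomplete; moreover the machinery you propose is heavier than what the step requires, since no Moore-specific geodesic uniqueness is needed --- girth alone suffices. Concretely: (i) $d_{H-ab}(a,b)=g-1$, because a shorter $a$--$b$ path together with the edge $ab$ would be a cycle of length less than $g$ in $H$; hence each $P_i$ is a geodesic of its copy $F_i$, as is every subpath of it; (ii) the only vertices of a copy $F_m$ with neighbours outside $F_m$ are its copies of $a$ and $b$, and each connector edge is the unique edge joining consecutive copies. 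Therefore any path joining two vertices $x,y$ of $\tilde Z$ either stays inside one copy (so has length at least the subpath-of-geodesic distance, which is the short arc of $\tilde Z$), or must cross every cut on one side of the ring, entering and leaving each intermediate copy at its two attachment vertices and hence spending at least $g-1$ edges there plus one connector per cut; summing these contributions gives exactly the corresponding arc length of $\tilde Z$. This yields $d_{\mathcal{F}}(x,y)\geq d_{\tilde Z}(x,y)$, i.e.\ $\tilde Z$ is isometric, and it uses only that $H$ has girth $g$ and that $e_0$ lies on a $g$-cycle --- consistent with the paper's remark that the same construction works starting from any cage, not only from a Moore graph.
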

	
	\begin{proof}
		Let $H$ be a Moore graph of girth $g$ and minimum degree $\delta \geq 3$.
		Consider a length $g$ cycle $C$ of $H$, and an edge $uv$ of $C$.
		Note that in the graph $H-\{uv\}$, the vertices $u$ and $v$ are at the ends of a length $g-1$ geodesic $P$. 
		Create $j$ copies $F_1, F_2, \dots, F_j$ of the graph $H-\{uv\}$, and denote by $u_i$ and $v_i$ the copies of $u$ and $v$ in $F_i$, and denote by $P_i$ the copy of the geodesic $P$.
		Create $\mathcal{F}(\delta, g, q)$ by taking the disjoint union of the graphs $F_1 \cup \dots \cup F_j$, and adding all edges $v_1u_2, v_2u_3, \dots, v_{j_1}u_j, v_ju_1$. For examples, see Figure \ref{fig:sharp_example_3_4}.
		The graph $\mathcal{F}(\delta, g, q)$ is $\delta$-regular, has girth $g$, and the cycle formed from the $P_i$'s and the newly added edges is an isometric cycle of length $q = jg$.
		Since $\mathcal{F}(\delta, g, q)$ is formed from $j$ copies of $H$, it has order $jM(\delta, g) = \frac{q}{g}M(\delta, g)$.
	\end{proof}
	
	In particular, note that for all $\delta \geq 3$, the generic bound $n \geq \frac{q(\delta + 1)}{3}$ and the bipartite / triangle-free bound $n \geq \frac{q\delta}{2}$ are both sharp.
	The situation is more dire when we consider girth 5, where Theorem \ref{thm:sharp_moore_parameters} only yields sharp bounds for minimum degree $3, 7$ and (possibly) $57$ --- the degrees for which there (possibly) exists a girth 5 Moore graph \cite{hoffman1960moore}.
	Theorem \ref{thm:sharp_moore_parameters} is more effective for small even girths, as there are infinitely many Moore graphs with girths 6, 8 and 12 \cite{feit1964nonexistence}.
	
	\begin{figure}[h]
		\centering
		\begin{tikzpicture}[scale=0.6, inner sep=0.7mm]
	\begin{pgfonlayer}{nodelayer}
		\node [style=whitevertex] (0) at (-5.75, 1) {};
		\node [style=whitevertex] (1) at (-5.75, 0) {};
		\node [style=whitevertex] (2) at (-5.75, -1) {};
		\node [style=blackvertex] (3) at (-6.75, 0) {};
		\node [style=dgreyvertex] (4) at (-4.75, 0) {};
		\node [style=whitevertex] (5) at (-8.75, 1) {};
		\node [style=whitevertex] (6) at (-8.75, 0) {};
		\node [style=whitevertex] (7) at (-8.75, -1) {};
		\node [style=blackvertex] (8) at (-9.75, 0) {};
		\node [style=dgreyvertex] (9) at (-7.75, 0) {};
		\node [style=whitevertex] (10) at (-2.75, 1) {};
		\node [style=whitevertex] (11) at (-2.75, 0) {};
		\node [style=whitevertex] (12) at (-2.75, -1) {};
		\node [style=blackvertex] (13) at (-3.75, 0) {};
		\node [style=dgreyvertex] (14) at (-1.75, 0) {};
		\node [style=none] (15) at (-10.5, 0) {};
		\node [style=none] (16) at (-1, 0) {};
		\node [style=blackvertex] (17) at (1.75, 0) {};
		\node [style=whitevertex] (18) at (2.75, 1) {};
		\node [style=whitevertex] (19) at (2.75, -1) {};
		\node [style=whitevertex] (20) at (3.75, 1) {};
		\node [style=whitevertex] (21) at (3.75, -1) {};
		\node [style=dgreyvertex] (22) at (4.75, 0) {};
		\node [style=blackvertex] (23) at (5.75, 0) {};
		\node [style=whitevertex] (24) at (6.75, 1) {};
		\node [style=whitevertex] (25) at (6.75, -1) {};
		\node [style=whitevertex] (26) at (7.75, 1) {};
		\node [style=whitevertex] (27) at (7.75, -1) {};
		\node [style=dgreyvertex] (28) at (8.75, 0) {};
		\node [style=none] (29) at (1, 0) {};
		\node [style=none] (30) at (9.5, 0) {};
		\node [style=none] (31) at (-5.75, -2) {$(4,3)$};
		\node [style=none] (32) at (5.25, -2) {$(3,4)$};
	\end{pgfonlayer}
	\begin{pgfonlayer}{edgelayer}
		\draw (0) to (1);
		\draw (1) to (2);
		\draw [bend left] (0) to (2);
		\draw (3) to (0);
		\draw (3) to (1);
		\draw (3) to (2);
		\draw (0) to (4);
		\draw (1) to (4);
		\draw (2) to (4);
		\draw (5) to (6);
		\draw (6) to (7);
		\draw [bend left] (5) to (7);
		\draw (8) to (5);
		\draw (8) to (6);
		\draw (8) to (7);
		\draw (5) to (9);
		\draw (6) to (9);
		\draw (7) to (9);
		\draw (9) to (3);
		\draw (10) to (11);
		\draw (11) to (12);
		\draw [bend left] (10) to (12);
		\draw (13) to (10);
		\draw (13) to (11);
		\draw (13) to (12);
		\draw (10) to (14);
		\draw (11) to (14);
		\draw (12) to (14);
		\draw (4) to (13);
		\draw (15.center) to (8);
		\draw (14) to (16.center);
		\draw (18) to (17);
		\draw (17) to (19);
		\draw (18) to (21);
		\draw (20) to (19);
		\draw (18) to (20);
		\draw (19) to (21);
		\draw (20) to (22);
		\draw (22) to (21);
		\draw (24) to (23);
		\draw (23) to (25);
		\draw (24) to (27);
		\draw (26) to (25);
		\draw (24) to (26);
		\draw (25) to (27);
		\draw (26) to (28);
		\draw (28) to (27);
		\draw (22) to (23);
		\draw (28) to (30.center);
		\draw (29.center) to (17);
	\end{pgfonlayer}
\end{tikzpicture}
		\caption{On the left is a subgraph of the graph $\mathcal{F}(4, 3, q)$ constructed in the proof of Theorem \ref{thm:sharp_moore_parameters}.
		On the right is a subgraph of $\mathcal{F}(3, 4, q)$. In both diagrams, the vertices $u_i$ are black, and the vertices $v_i$ are grey.}
		\label{fig:sharp_example_3_4}
	\end{figure}
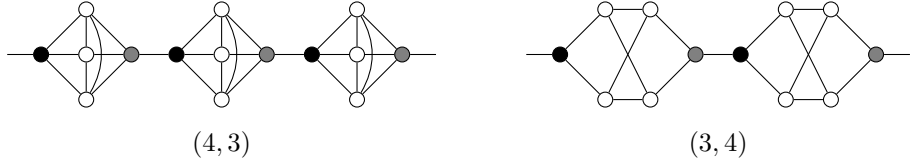 
	
	It is clear that construction in Theorem \ref{thm:sharp_moore_parameters} works not just when $H$ is a Moore graph, but when $H$ is any cage. 
	Thus we can use known cages to construct graphs with fixed equator, girth and minimum degree that have low order, yielding Remark \ref{rem:cage_examples}.
	Remember that we denote by $C(\delta, g)$ the order of a $(\delta, g)$-cage.
	
	\begin{rem}
		\label{rem:cage_examples}
		Let $j,\delta$ and $g$ all be positive integers greater than or equal to 3.
		There exists a $\delta$-regular graph $\mathcal{C}(\delta, g, q)$ with girth $g$, equator $q = jg$ and order $n = \frac{q}{g}C(\delta, g)$.
	\end{rem}
	
	Unfortunately, for some $g$ and $\delta$, our best bounds on $C(\delta, g)$ are substantially larger than $M(\delta, g)$ (for a detailed breakdown, see the surveys \cite{exoo2012dynamic, wong1982cages}). 
	However, for some combinations of interest there are small cages.
	The unique $(4,5)$-cage, also known as the Robertson graph, has order 19, which compares very favorably against $M(4,5) = 17$ \cite{robertson1964smallest}.
	The cage orders $C(5,5) = 30$ and $C(6,5) = 40$ are also not far off $M(5,5) = 26$ and $M(6,5) = 37$ \cite{wegner1973smallest, o1979smallest}.
	We summarise these comparisons in Table \ref{tab:moore_cage}.
	
	\begin{table}[h]
		\centering
		\begin{tabular}{c|cc}
			\hline
			$\delta$ & $M(\delta, 5)$ & $C(\delta, 5)$ \\ \hline
			3        & 10             & 10             \\
			4        & 17             & 19             \\
			5        & 26             & 30             \\
			6        & 37             & 40             \\ \hline
		\end{tabular}
		\caption{Table comparing the values of the Moore bound for girth 5 and the order of cages of girth 5.}
		\label{tab:moore_cage}
	\end{table}

	To study sharpness of the $C_4$-free bound in Proposition \ref{prop:c4_lower_bound}, we will need the Brown graphs, also known as Erd\"{o}s-R\'{e}nyi polarity graphs.
	To construct the \textbf{Brown graph} $B(t) = (V,E)$, consider a prime power $t$, and let $V$ be the elements of the projective plane $PG(2, t)$ over the finite field of order $t$ (i.e., equivalences classes $x = [x_1, x_2, x_3]$ where $x$ is equivalent to $y$ if $x = \lambda y$ for some scalar $\lambda$). 
	Let vertices $x$, $y$ be adjacent if their triples are orthogonal (i.e., $x\cdot y = 0$). 
	We recall properties of the graph $B(t)$ that were first used in \cite{brown1966graphs} and \cite{erdos1962problem}, and are discussed in detail in \cite{bachraty2015polarity}.
	
	\begin{lem}\textup{\cite{bachraty2015polarity, brown1966graphs, erdos1962problem}}
		\label{lem:brown_graph}
		Let $t$ be a prime power. The Brown graph $B(t)$ has the following properties:
		\begin{itemize}[noitemsep]
			\item $B(t)$ is $C_4$-free,
			\item $n = t^2 + t + 1$,
			\item Exactly $t+1$ vertices $x$ satisfy $x\cdot x = 0$, and these vertices have degree $t$,
			\item The remaining $t^2$ vertices have degree $t + 1$,
			\item If $x\cdot x = 0$, then $x$ is not in any triangle,
			\item There is exactly one length two path connecting any two vertices.
		\end{itemize}
	\end{lem}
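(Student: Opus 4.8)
The plan is to recast all six properties in terms of the \emph{polarity} $x \mapsto x^{\perp} := \{y : x\cdot y = 0\}$ attached to the standard symmetric form on $\mathbb{F}_t^3$. First I would verify the basic dictionary: orthogonality is well defined on projective points since $(\lambda x)\cdot(\mu y) = \lambda\mu\,(x\cdot y)$; each $x^{\perp}$ is a line of $PG(2,t)$; adjacency is symmetric because the form is; and because the Gram matrix is the identity (hence nonsingular) the map $x \mapsto x^{\perp}$ is a bijection from points to lines. The order count is then immediate, as the number of one-dimensional subspaces of $\mathbb{F}_t^3$ is $\frac{t^3-1}{t-1} = t^2+t+1$, giving the second property.

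For the common-neighbour structure I would note that a common neighbour of two distinct vertices $x,y$ is precisely a point of $x^{\perp}\cap y^{\perp}$, equivalently a point orthogonal to the two-dimensional span of $x$ and $y$; such a point is unique and represented by the cross product $x\times y$. Since two distinct lines of a projective plane meet in exactly one point, any two distinct vertices have at most one common neighbour (exactly one in the non-degenerate case, the lone exception being when the intersection point coincides with an absolute endpoint). This gives the length-two-path property, and $C_4$-freeness follows at once, since a $4$-cycle would require two distinct vertices with two distinct common neighbours.

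The degree of $x$ equals the number of points of the line $x^{\perp}$ other than $x$ itself. Each line carries exactly $t+1$ points, so $\deg(x)=t+1$ when $x\cdot x\neq 0$ (then $x\notin x^{\perp}$) and $\deg(x)=t$ when $x\cdot x = 0$ (then $x\in x^{\perp}$ and there is no loop). It remains to count the \emph{absolute} points, the solutions of $x_1^2+x_2^2+x_3^2=0$: for odd $t$ this is a nondegenerate conic, which by the classical count has exactly $t+1$ points, while for even $t$ the Frobenius identity gives $x_1^2+x_2^2+x_3^2=(x_1+x_2+x_3)^2$, so the absolute points form the line $x_1+x_2+x_3=0$, again $t+1$ of them. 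Subtracting leaves $t^2$ points of degree $t+1$, establishing the third and fourth properties.

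Finally, for the triangle property, suppose an absolute vertex $x$ lies in a triangle $xyz$. Then $y,z\in x^{\perp}$, and $x\in x^{\perp}$ too, so all three are collinear on $\ell:=x^{\perp}$; moreover $x\in y^{\perp}$ since $x\sim y$. The edge $yz$ forces $z\in y^{\perp}$, so $z\in \ell\cap y^{\perp}=x^{\perp}\cap y^{\perp}$. As $x\neq y$ the polarity gives $x^{\perp}\neq y^{\perp}$, so these lines meet in a single point, which must be $x$ because $x\in x^{\perp}\cap y^{\perp}$; hence $z=x$, contradicting the distinctness of the triangle's vertices. I expect the main obstacle to be the count of absolute points, where the conic degenerates in characteristic two so the two parities must be treated separately and the conic point-count theorem invoked; once the polarity dictionary and the single-intersection fact are in place, the remaining properties are routine consequences of projective-plane incidence.
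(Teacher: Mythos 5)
The paper never proves this lemma: it is stated with citations (Brown, Erd\H{o}s--R\'enyi, and the survey of Bachrat\'y and \v{S}ir\'a\v{n}) and used as a black box, so there is no in-paper argument to compare against. Your proof is correct and is essentially the standard polarity-graph argument from those sources: the nondegeneracy of the form makes $x \mapsto x^{\perp}$ a bijection from points to lines of $PG(2,t)$; the order is the point count $\frac{t^3-1}{t-1} = t^2+t+1$; degrees are read off from $|x^{\perp}| = t+1$ according to whether $x \in x^{\perp}$; the absolute points number $t+1$ in both characteristics (a nondegenerate conic for odd $t$, the line $x_1+x_2+x_3=0$ for even $t$); common neighbours are controlled by the fact that the distinct lines $x^{\perp}$ and $y^{\perp}$ meet in exactly one point, which gives $C_4$-freeness; and your triangle argument (an absolute $x$ lies in $x^{\perp} \cap y^{\perp}$ for any neighbour $y$, forcing the third vertex to equal $x$) is clean and correct.

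One point in your write-up deserves emphasis rather than a parenthesis: the ``lone exception'' you note is real, and it shows that the last item of the lemma, read literally, is false. If $x$ is absolute and $y$ is a neighbour of $x$, then the unique point of $x^{\perp} \cap y^{\perp}$ is $x$ itself, so $x$ and $y$ are joined by \emph{no} path of length two. The accurate statement is: any two distinct vertices have at most one common neighbour, and exactly one unless one of them is absolute and adjacent to the other. This imprecision lies in the lemma's statement, not in your proof, and it is harmless for the only application in the paper (Proposition \ref{prop:sharp_c4_free}): there the relevant pair consists of two neighbours $y,z$ of an absolute vertex $x$, and your argument shows their unique common neighbour is exactly $x$, while your triangle property gives $y \not\sim z$, so ``$y,x,z$ is the only $y$--$z$ path of length two or less'' is fully justified.
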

	
	Using the Brown graphs, we can adapt a construction in \cite{erdHos1989radius} to show that the bound in Proposition \ref{prop:c4_lower_bound} is almost sharp.
	
	\begin{prop}
		\label{prop:sharp_c4_free}
		Let $j\geq 3$ and $\delta \geq 3$ be integers such that $\delta + 1$ is a power of some prime number.
		Then there exists a $C_4$-free graph $\mathcal{G}(\delta, q)$ with minimum degree $\delta$, equator $q = 5j$ and order $n = \frac{q}{5}(\delta^2 + 3\delta + 2)$.
	\end{prop}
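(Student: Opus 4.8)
The plan is to run the cyclic-chaining idea of Theorem~\ref{thm:sharp_moore_parameters}, but with the Brown graph $B(t)$ in place of a Moore graph, where $t=\delta+1$ is a prime power by hypothesis. By Lemma~\ref{lem:brown_graph}, $B(t)$ is $C_4$-free of order $t^2+t+1$, its absolute vertices have degree $t=\delta+1$, and all remaining vertices have degree $t+1=\delta+2$. Fix an absolute vertex $w$ (degree $t$) and a neighbour $z$ of $w$. Deleting $z$ from $B(t)$ drops $\deg(w)$ from $t$ to $t-1=\delta$, which is how I would realise the minimum degree. Since $B(t)$ is $C_4$-free, any two non-adjacent vertices share at most one common neighbour; as $z$ is a common neighbour of any two of its neighbours, two non-adjacent neighbours of $z$ have $z$ as their unique common neighbour, and so in $B(t)-z$ they are non-adjacent with no common neighbour, hence at distance at least $3$. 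Because $\delta\geq 3$ forces $t\geq 4$, I may choose two non-adjacent neighbours $a,b\in N(z)\setminus\{w\}$. The decisive structural input I would isolate is that $a,b$ can be chosen at distance exactly $4$ in $B(t)-z$, giving an isometric path of length $4$ inside the modified copy; it is this that makes each copy contribute $5$ to the equator.

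Granting such a choice, I would take $j$ disjoint copies $F_1,\dots,F_j$ of $B(t)-z$, writing $a_i,b_i,w_i$ for the copies of $a,b,w$ in $F_i$, and add the cyclic connecting edges $b_1a_2,\,b_2a_3,\,\dots,\,b_{j-1}a_j,\,b_ja_1$, exactly as the edges $v_iu_{i+1}$ of Theorem~\ref{thm:sharp_moore_parameters}; call the result $\mathcal{G}(\delta,q)$. The order is then immediate:
\[
 n=j\bigl(t^2+t\bigr)=j(\delta+1)(\delta+2)=\tfrac{q}{5}\bigl(\delta^2+3\delta+2\bigr),
\]
using $q=5j$, so the order matches the claim once the equator is confirmed.

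For the minimum degree, deleting $z_i$ lowers the degree of each of its neighbours by one and leaves every other vertex untouched; the two attachment vertices $a_i,b_i$ regain their lost edge through the connecting edges, while the unreconnected neighbour $w_i$ is left at degree $t-1=\delta$, and no vertex ever drops below $\delta$. Thus the minimum degree is exactly $\delta$. For $C_4$-freeness I would argue that a $4$-cycle cannot lie inside a single $F_i$ (an induced subgraph of the $C_4$-free graph $B(t)$), and cannot use a connecting edge either: consecutive copies meet only in the single edge $b_ia_{i+1}$, so a $4$-cycle through it would need a path of length three joining $b_i$ and $a_{i+1}$ inside the union of two copies, which the single-edge attachment forbids; a $4$-cycle using two connecting edges is ruled out the same way.

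The remaining, and by far the hardest, step is to show $\eqt(\mathcal{G}(\delta,q))=5j$. For the lower bound, concatenating the length-$4$ isometric arcs $a_i\rightsquigarrow b_i$ with the $j$ connecting edges yields a cycle $C^\ast$ of length $4j+j=5j$; I would prove $C^\ast$ is isometric by checking $d_{\mathcal{G}}(x,y)=d_{C^\ast}(x,y)$ for all $x,y\in C^\ast$, the point being that any shortcut must leave a copy through a connecting edge and cannot re-enter cheaply, so the within-copy geodesic of length $4$ cannot be beaten. For the upper bound I would run the disk-covering estimate of Proposition~\ref{prop:c4_lower_bound} in reverse: because deleting $z_i$ increases only the distances between pairs inside $N(z_i)$, each copy $F_i$ has diameter at most $4$, so any isometric cycle meets a copy in a geodesic arc of length at most $4$ and crosses between copies along the sparse connecting edges, and a counting argument then caps the length of every isometric cycle at $5j$. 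The main obstacle is the pair of facts underpinning this last step -- the existence of the distance-$4$ attachment pair $a,b$ in $B(t)-z$ and the global isometry of $C^\ast$ -- both of which require a careful analysis of how the single vertex deletion reshapes distances in the Brown graph.
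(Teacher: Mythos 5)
Your overall architecture (chain $j$ modified copies of the Brown graph in a cycle, exactly as in Theorem \ref{thm:sharp_moore_parameters}) is the same as the paper's, but the step you yourself flag as decisive --- that after deleting the single vertex $z$ one can choose two of its former neighbours $a,b$ at distance exactly $4$ in $B(t)-z$ --- is false, and no choice of deleted vertex can repair it. By Lemma \ref{lem:brown_graph}, any two distinct vertices of $B(t)$ have exactly one common neighbour. Take non-adjacent $a,b\in N(z)$; their unique common neighbour is $z$, so $d_{B(t)-z}(a,b)\geq 3$, as you argue. But the distance is also at most $3$: since $a\sim z$, $C_4$-freeness gives $|N(a)\cap N(z)|\leq 1$, and $\deg(a)\geq t\geq 4$, so $a$ has a neighbour $p\notin N(z)\cup\{z,b\}$. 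Then $p\not\sim b$ (otherwise $a$ and $b$ would have the second common neighbour $p$), so $p$ and $b$ have a unique common neighbour $c$; moreover $c\neq z$ because $p\notin N(z)$, and $c\neq a$ because $a\not\sim b$. Hence $a,p,c,b$ is a path of length $3$ in $B(t)-z$. So every pair of non-adjacent former neighbours of $z$ lies at distance exactly $3$ (and $B(t)-z$ has diameter at most $3$), there is no distance-$4$ attachment pair, your arcs have length $3$, the chained cycle has length $4j$ rather than $5j$, and the order $j(t^2+t)$ no longer matches $\frac{q}{5}(\delta^2+3\delta+2)$ for the equator actually produced. The construction as proposed cannot prove the proposition.

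The missing idea --- and it is how the paper proceeds --- is to delete edges as well as a vertex. Take $x$ absolute and $y,z\in N(x)$ (these are non-adjacent because $x$ lies in no triangle), delete $x$, and in addition delete every edge joining a neighbour of $y$ to a neighbour of $z$, i.e.\ the middle edge of every $y$--$z$ path of length $3$. This edge removal is precisely what pushes $d(y,z)$ from $3$ up to $4$ in the modified copy $F$. The accounting still works: $C_4$-freeness guarantees each neighbour of $y$ is adjacent to at most one neighbour of $z$, so every vertex is incident with at most one removed edge and the minimum degree of $F$ stays at least $\delta$; and $C_4$-freeness itself is preserved under any deletions, so no new $4$-cycles can appear. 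With $d_F(y,z)=4$ in each copy, the chaining, the degree bookkeeping, and the order count $n=j(t^2+t)=\frac{q}{5}(\delta^2+3\delta+2)$ then go through essentially as you wrote them.
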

	
	\begin{proof}
		Let $x$ be a vertex of $H = B(\delta + 1)$ such that $x\cdot x = 0$, and let $y$ and $z$ be two neighbors of $x$.
		Per Lemma \ref{lem:brown_graph}, the only $y-z$ path of length two or less is $y,x,z$. 
		Let $F$ be the graph formed from $H - \{x\}$ by removing all edges between a neighbor of $y$ and a neighbor of $z$ (i.e., remove the middle edge of all $y-z$ paths of length 3).
		In $F$, we now have $d(x,y) = 4$.
		Since $H$ is $C_4$-free, each neighbor of $y$ is adjacent to at most one neighbor of $z$ (and vice-versa).
		Thus each vertex of $F$ is incident with at most one edge of $H - F$, so the minimum degree of $F$ is at least $\delta$.
		Create $j$ copies $F_1, \dots, F_j$ of $F$, and denote by $y_i$ and $z_i$ the copies of $y$ and $z$ in $F_i$.
		The graph $\mathcal{G}(\delta, q)$ is formed from the disjoint union $F_1 \cup \dots \cup F_j$ by adding the edges $y_1x_2, y_2x_3, \dots, y_{j-1}x_j, y_jx_1$.
	\end{proof}
	
	When $\delta = 3$, we find a substantially smaller graph than the one derived from the Brown graph of Proposition \ref{prop:sharp_c4_free}.
	For $j\geq 3$, create a graph $\mathcal{H}(3, 6j)$ by taking $j$ copies $H_1 \cup \dots \cup H_j$ of the graph in Figure \ref{fig:delta3_noc4}, and adding all edges $v_1u_2, v_2u_3, \dots, v_qu_1$.
	
	\begin{figure}[h]
		\centering
		\begin{tikzpicture}[scale=0.9, inner sep=0.9mm]
	\begin{pgfonlayer}{nodelayer}
		\node [style=blackvertex, label={left:$u_i$}] (1) at (-4, 0) {};
		\node [style=whitevertex] (2) at (-2.5, 1) {};
		\node [style=whitevertex] (3) at (-2.5, -1) {};
		\node [style=whitevertex] (4) at (-1, 1) {};
		\node [style=whitevertex] (5) at (-1, -1) {};
		\node [style=whitevertex] (6) at (1, 1) {};
		\node [style=whitevertex] (7) at (0, 0) {};
		\node [style=whitevertex] (8) at (1, -1) {};
		\node [style=whitevertex] (9) at (2.5, 1) {};
		\node [style=whitevertex] (10) at (2.5, -1) {};
		\node [style=lgreyvertex, label={right:$v_i$}] (11) at (4, 0) {};
		\node [style=none] (12) at (0, -1.5) {$H_i$};
	\end{pgfonlayer}
	\begin{pgfonlayer}{edgelayer}
		\draw (1) to (2);
		\draw (1) to (3);
		\draw (3) to (2);
		\draw (2) to (4);
		\draw (3) to (5);
		\draw (4) to (7);
		\draw (7) to (5);
		\draw (4) to (6);
		\draw (6) to (7);
		\draw (7) to (8);
		\draw (8) to (5);
		\draw (6) to (9);
		\draw (8) to (10);
		\draw (9) to (10);
		\draw (10) to (11);
		\draw (11) to (9);
	\end{pgfonlayer}
\end{tikzpicture}
		\caption{Copies of the gadget $H_i$ above are used to build the $C_4$-free graph $\mathcal{H}(\delta, q)$.}
		\label{fig:delta3_noc4}
	\end{figure}
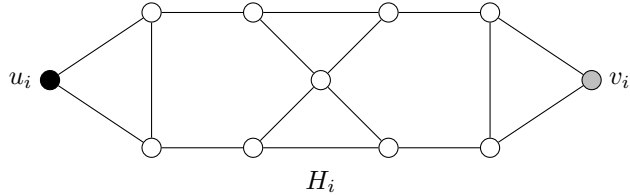 
	
	The $C_4$-free graph $\mathcal{H}(3, 6j)$ has equator $q = 6j$, minimum degree $3$ and order $n = 11j = \frac{11}{6}q = 1.8\overline{3}q$. 
	This is very nearly the bound in Proposition \ref{prop:c4_lower_bound}, which simplifies to $n \geq \frac{8}{5}q = 1.6q$ when $\delta = 3$.
	
	\section{Structure of Equatorial Graphs}
	\label{sec:equatorial_structure}
	
	We say a graph is an \textit{equatorial graph} if it has girth $g$, equator $q > 6k+3$, minimum degree $\delta$ and attains the order bound $n = \frac{q}{g}M(\delta, g)$ of Theorem \ref{thm:lower_bound}.
	These graphs are `equator-constrained' analogues of Moore graphs. 
	We devote this section to describing their structure.
	Like Moore graphs, all equatorial graphs are regular and every vertex of an equatorial graph lies on a maximum length isometric cycle. 
	Further, every equatorial graph comes equipped with a unique `canonical' partition into $q$ parts $L_0, \dots, L_{q-1}$ such that every equatorial cycle has exactly one vertex in each part, and different parts $L_i$ and $L_j$ are adjacent exactly when $|j-i| = 1$.
	As a consequence of the partition described above, an equatorial graph, considered as a graph with loops on every vertex, retracts onto any of its isometric $q$-cycles.
	
	We introduce a useful partition of their vertex sets.
	\begin{defn*}
		\label{def:partition}
		Let $C = u_0, u_1, \dots, u_{q-1}$ be an isometric $q$-cycle of an equatorial graph with girth $g$ and let $k = \lceil \frac{g}{2} \rceil - 1$. 
		Taking subscripts mod $q$, set
		\begin{align*}
			L_i = \dd_k(u_{i-k}) \cap \dd_k(u_{i+k}).
		\end{align*}
		We call $\{L_i : i\in [0,q-1]\}$ the \textit{partition of $G$ induced by $C$}.
	\end{defn*}
	Later, we show that this partition is independent of the choice of $C$.
	
	We show that all vertices are in the same numbers of disks $\dd_k(u_i)$, and each disk $\dd_k(u_i)$ contains the same number of vertices.
	
	\begin{lem}
		\label{lem:extremal_disk_counting}
		Suppose $G=(V,E)$ is an equatorial graph with equator $q$, girth $g$ and minimum degree $\delta$.
		Let $C = u_0, u_1, \dots, u_{q-1}$ be an isometric $q$-cycle of $G$, and denote $e_i = u_iu_{i+1}$ (subscripts mod $q$).
		\begin{itemize}[noitemsep]
			\item If $g$ is odd, then $d_k(u_i) = M(\delta, g)$ for every vertex $u_i$ of $C$.
			Further, for all vertices $v$ of $G$, the vertex $v$ is contained in exactly $g$ disks of the form $\dd_k(u_i)$.
			The indices $i$ such that $v\in \dd_k(u_i)$ form an interval $\{j-k, j-k+1, \dots, j+k\}$ (indices mod $q$).
			\item If $g$ is odd, then $d_k(e_i) = M(\delta, g)$ for every edge $e_i$ of $C$.
			Further, every vertex $v$ of $G$ is contained in exactly $g$ disks $\dd_k(e_i)$.
			These indices $i$ form an interval $\{j-k - 1, j-k, \dots, j+k\}$ (indices mod $q$).
		\end{itemize}
	\end{lem}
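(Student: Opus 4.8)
The plan is to extract the structure from the observation that, for an equatorial graph, \emph{every} inequality in the chain proving Lemma \ref{lem:low_bound_odd} (respectively Lemma \ref{lem:low_bound_even}) must hold with equality. I treat the odd case in detail; the even case is identical after replacing the vertices $u_i$ by the edges $e_i$ and $2k+1$ by $2k+2$. (I also read the second bullet as ``if $g$ is even''.)

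First I would unwind the equality chain. Since $G$ is equatorial, $n = \frac{q}{g}M(\delta,g)$, so the chain of Lemma \ref{lem:low_bound_odd} collapses to
\[
	n = \left| \bigcup_{i=0}^{q-1} \dd_k(u_i) \right| = \frac{1}{g}\sum_{i=0}^{q-1} |\dd_k(u_i)| = \frac{q}{g}M(\delta,g).
\]
The rightmost equality, combined with $|\dd_k(u_i)| = d_k(u_i) \geq M(\delta,g)$ from Remark \ref{rem:k_degree} and the fact that there are exactly $q$ summands, forces $d_k(u_i) = M(\delta,g)$ for every $i$. The leftmost equality forces $\bigcup_i \dd_k(u_i) = V$, so every vertex lies in some disk. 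The middle equality is where I would redo the double count $\sum_i |\dd_k(u_i)| = \sum_{v\in V} |\{i : v \in \dd_k(u_i)\}| \le g\,|\bigcup_i \dd_k(u_i)|$; its equality case says that every vertex of the union lies in \emph{exactly} $g$ disks. Together these give both cardinality assertions.

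For the interval claim, fix a vertex $v$ and set $S = \{i : v \in \dd_k(u_i)\}$, so $|S| = g = 2k+1$ by the above. If $i, i' \in S$ then $d(u_i, u_{i'}) \le d(u_i, v) + d(v, u_{i'}) \le 2k$, and since $C$ is isometric, $d_C(u_i, u_{i'}) \le 2k$; thus any two indices of $S$ are at cyclic distance at most $2k$. I would then invoke the elementary fact that in $\mathbb{Z}_q$ with $q > 6k$, a set whose elements are pairwise at cyclic distance $\le 2k$ must lie in a single arc of $2k+1$ consecutive indices: fix $s_0\in S$, note every element lies in the non-wrapping arc $\{s_0-2k,\dots,s_0+2k\}$ because $q>4k$, and observe that if the linear span of $S$ in that arc exceeded $2k$ then, since $q-4k>2k$, both ways around would exceed $2k$, contradicting pairwise distance $\le 2k$. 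As $|S| = 2k+1$ equals the number of indices in such an arc, $S$ fills it, so $S = \{j-k,\dots,j+k\}$. For the even case, writing $T = \{p : v \in \dd_k(u_p)\}$, the identity $\dd_k(e_i)=\dd_k(u_i)\cup\dd_k(u_{i+1})$ gives $\{i : v\in\dd_k(e_i)\} = T \cup (T-1)$; the same triangle-inequality argument puts $T$ inside an arc of length $2k$, hence the edge-index set lies in an arc of $2k+2$ consecutive indices, which it must fill since it has $g = 2k+2$ elements, yielding $\{j-k-1,\dots,j+k\}$.

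The routine part is the equality bookkeeping, which is forced directly by Lemmas \ref{lem:low_bound_odd} and \ref{lem:low_bound_even}. The one step requiring care is the combinatorial lemma that pairwise cyclic distance $\le 2k$ forces containment in a single short arc; this is precisely where the hypothesis $q > 6k+3$ (giving $q > 6k > 4k$) is needed, to prevent the indices from ``wrapping around'' the cycle and thereby splitting $S$ into more than one arc.
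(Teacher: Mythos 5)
Your proof is correct and takes essentially the same approach as the paper's: the paper likewise derives the equality statements by noting that any slack in the counting chain of Lemmas \ref{lem:low_bound_odd} and \ref{lem:low_bound_even} would force $n > \frac{q}{g}M(\delta, g)$, contradicting that $G$ is equatorial, and it obtains the interval claim from the fact that disks centered at cyclically distant $u_i$ are disjoint along the isometric cycle. Your write-up is simply more explicit where the paper is terse --- the cyclic ``arc'' lemma, the identity $\dd_k(e_i) = \dd_k(u_i) \cup \dd_k(u_{i+1})$, and the even-girth case (which the paper dismisses as ``similar'') --- and your reading of the second bullet's ``odd'' as a typo for ``even'' matches the paper's intent.
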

	
	\begin{proof}
		Let $g$ be odd. 
		From the proof of Lemma \ref{lem:low_bound_odd}, we have that each disk $\dd_k(u_i)$ has at least $M(\delta, g)$ vertices, and that each vertex $v$ is contained in at most $g$ such disks. 
		Suppose to the contrary that either $d_k(u_i) > M(\delta, g)$, or that some vertex $v$ is in fewer than $g$ such disks.
		Then the order of $G$ is greater than $\frac{q}{g}M(\delta, g)$ by the same counting argument as in the proof of Lemma \ref{lem:low_bound_odd}, a contradiction!
		Since $C$ is an isometric cycle of length at least $6k+3$, we have that $\dd_k(u_i) \cap \dd_k(u_j) = \emptyset$ whenever $|j-i| > 2k$.
		Therefore the indices $i$ such that $v\in \dd_k(u_i)$ form an interval of $2k+1 = g$ consecutive integers.
		The proof for $g$ even is similar (use Lemma \ref{lem:low_bound_even} instead of Lemma \ref{lem:low_bound_odd}).
	\end{proof}

	The next series of lemmas describe the partitions of an equatorial graph induced by some isometric $q$-cycle.
	The first of these lemmas shows the sets $L_i$ have a very convenient representation in graphs of even girth.
	
	\begin{lem}
		\label{lem:ti_is_li}
		Let $G = (V,E)$ be an equatorial graph with even girth $g = 2k+2$. 
		Suppose that $C$ is some isometric $q$-cycle, and $L_i$ is defined as in Definition \ref{def:partition}.
		Then 
		\[
			L_i = \dd_k(e_{i-k-1}) \cap \dd_k(e_{i+k}).
		\]
	\end{lem}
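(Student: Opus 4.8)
The plan is to expand each edge-disk into the union of its two vertex-disks, distribute the intersection, and then use the isometry of $C$ to discard all but one of the resulting terms. Recall from the definitions that $\dd_k(e_j) = \dd_k(u_j) \cup \dd_k(u_{j+1})$. Applying this to the two edges in question and distributing the intersection over the unions gives
\begin{align*}
	\dd_k(e_{i-k-1}) \cap \dd_k(e_{i+k}) = \bigl(\dd_k(u_{i-k-1}) \cup \dd_k(u_{i-k})\bigr) \cap \bigl(\dd_k(u_{i+k}) \cup \dd_k(u_{i+k+1})\bigr),
\end{align*}
which is the union of the four vertex-disk intersections indexed by the pairs $(i-k-1, i+k)$, $(i-k-1, i+k+1)$, $(i-k, i+k)$, and $(i-k, i+k+1)$.

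Next I would invoke the isometric-cycle property established in the proof of Lemma \ref{lem:low_bound_odd} (which uses only the triangle inequality together with the isometry of $C$, and so is insensitive to the parity of $g$): if $\dd_k(u_a) \cap \dd_k(u_b) \neq \emptyset$, then $d_C(u_a, u_b) \leq 2k$. The index gaps of the four pairs above are $2k+1$, $2k+2$, $2k$, and $2k+1$ respectively. Since $q > 6k+3$, each of these gaps is smaller than $q/2$ and therefore equals the genuine cycle distance $d_C$ between the corresponding vertices of $C$ (no wraparound occurs). Three of the four gaps exceed $2k$, so the pairs $(i-k-1, i+k)$, $(i-k-1, i+k+1)$, and $(i-k, i+k+1)$ all contribute empty intersections. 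The one surviving term is $\dd_k(u_{i-k}) \cap \dd_k(u_{i+k})$, which is exactly $L_i$ by Definition \ref{def:partition}. Hence $\dd_k(e_{i-k-1}) \cap \dd_k(e_{i+k}) = L_i$, as claimed.

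This argument is essentially a short piece of set algebra once the emptiness of the cross-terms is in hand, so there is no serious obstacle. The only point that needs care is verifying that the three critical index gaps are truly at least $2k+1$ and that they represent cyclic (not merely linear) distances on $C$; both follow immediately from the hypothesis $q > 6k+3$, which keeps all gaps under consideration well below $q/2$. I would state this $q$-largeness check explicitly to justify replacing $d_C$ by the absolute index difference, and otherwise the identity falls out directly from the distributive expansion.
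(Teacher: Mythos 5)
Your proof is correct and is essentially the paper's own argument: the paper likewise reduces membership in each edge-disk to closeness to one of its two endpoints, and uses the triangle inequality plus the isometry of $C$ to rule out the outer endpoints $u_{i-k-1}$ and $u_{i+k+1}$, leaving exactly $\dd_k(u_{i-k}) \cap \dd_k(u_{i+k}) = L_i$. Your distributive set-algebra packaging (discarding three of the four cross terms because $k$-disks around cycle vertices at cycle-distance greater than $2k$ are disjoint) and your explicit wrap-around check via $q > 6k+3$ are just a tidier organization of the same case analysis.
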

	
	\begin{proof}
		It's clear that $L_i \subseteq \dd_k(e_{i-k-1}) \cap \dd_k(e_{i+k})$.
		Suppose that $v \in \dd_k(e_{i-k-1}) \cap \dd_k(e_{i+k})$. 
		We show that $v\in L_i$.
		Since $v \in \dd_k(e_{i-k-1})$, we have that $d(v, u_{i-k-1}) = k$ or $d(v, u_{i-k}) = k$. 
		If $d(v, u_{i-k-1}) = k$, then by the triangle inequality we get
		\[
			d(u_{i-k-1}, e_{i+k}) \leq d(u_{i-k-1}, v) + d(v, e_{i+k}) \leq 2k.
		\]
		However, this contradicts that $C$ is an isometric cycle.
		Therefore $d(v, u_{i-k}) = k$.
		The argument that $d(v, u_{i+k}) = k$ is similar, and we conclude that $v \in \dd_k(u_{i-k}) \cap \dd_k(u_{i+k}) = L_i$.
	\end{proof}
	
	Lemma \ref{lem:li_partition} confirms that $\{L_0, \dots, L_{q-1}\}$ is a partition of the equatorial graph's vertex set.
	
	\begin{lem}
		\label{lem:li_partition}
		Let $G=(V,E)$ be an equatorial graph and let $C=u_0, \dots, u_{q-1}$ be an isometric $q$-cycle of $G$.
		Then the sets $L_i$ partition $V$, and $u_i \in L_i$.
	\end{lem}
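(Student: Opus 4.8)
The plan is to derive everything from the interval structure already established in Lemma \ref{lem:extremal_disk_counting}, so that the partition property becomes a short combinatorial consequence rather than a fresh counting argument. First I would dispose of the inclusion $u_i \in L_i$: since $C$ is an isometric $q$-cycle, we have $d(u_i, u_{i-k}) = d(u_i, u_{i+k}) = k$, so $u_i \in \dd_k(u_{i-k}) \cap \dd_k(u_{i+k}) = L_i$. This also shows every part is nonempty. It then remains to prove that each vertex $v$ lies in \emph{exactly one} $L_i$, which gives disjointness and coverage simultaneously, hence the partition claim.

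For odd girth I would argue as follows. By Lemma \ref{lem:extremal_disk_counting}, the set of indices $i$ with $v \in \dd_k(u_i)$ is an interval $I = \{j-k, j-k+1, \dots, j+k\}$ of $g = 2k+1$ consecutive residues mod $q$, centered at some index $j$ that is uniquely determined by $v$. Now $v \in L_i = \dd_k(u_{i-k}) \cap \dd_k(u_{i+k})$ if and only if both $i-k$ and $i+k$ lie in $I$. Writing $i-k = j-k+t_1$ and $i+k = j-k+t_2$ with $t_1, t_2 \in \{0, 1, \dots, 2k\}$ and subtracting gives $t_2 - t_1 \equiv 2k \pmod q$. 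Because $q > 6k+3$, the only admissible solution is $t_1 = 0$ and $t_2 = 2k$ (the alternative $t_2 - t_1 = 2k - q$ is too negative to fall in the range $\{-2k,\dots,2k\}$), which forces $i = j$. Thus $v$ lies in $L_j$ and in no other part.

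For even girth I would first rewrite the parts using Lemma \ref{lem:ti_is_li}, which gives $L_i = \dd_k(e_{i-k-1}) \cap \dd_k(e_{i+k})$. The edge version of Lemma \ref{lem:extremal_disk_counting} says the indices $i$ with $v \in \dd_k(e_i)$ form an interval $I = \{j-k-1, j-k, \dots, j+k\}$ of $g = 2k+2$ consecutive residues. Membership $v \in L_i$ now requires both $i-k-1$ and $i+k$ to lie in $I$; these two indices differ by $2k+1$, which is exactly the diameter of an interval of $2k+2$ elements, so the same endpoint-matching argument (again invoking $q > 6k+3$ to discard the wrap-around solution $t_2 - t_1 = 2k+1-q$) forces $i = j$. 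Hence every vertex belongs to precisely one part in this case too.

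The routine-but-delicate part, and the only place anything can go wrong, is the modular bookkeeping: one must be certain that the interval $I$ is a genuine arc that does not wrap around and overlap itself, so that ``two residues at cyclic distance $2k$ (respectively $2k+1$)'' really are forced to be the two endpoints of $I$. This is precisely where the hypothesis $q > 6k+3$ enters, and I would state the relevant inequality explicitly when eliminating the spurious solutions above. Once uniqueness of the part containing each vertex is secured, the conclusion that $\{L_0, \dots, L_{q-1}\}$ partitions $V$ is immediate.
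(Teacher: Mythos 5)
Your proof is correct, and its uniqueness half takes a genuinely different route from the paper's. The existence part (that $u_i \in L_i$, and that $v \in L_j$ where $j$ is the center of the index interval supplied by Lemma \ref{lem:extremal_disk_counting}) coincides with what the paper does, as does your even-girth reduction via Lemma \ref{lem:ti_is_li}. For uniqueness, however, the paper does not invoke the interval structure at all: it argues metrically from the definition of $L_i$, noting that $v \in L_i \cap L_j$ forces $d(s,t) \le 2k$ for all $s \in \{u_{i-k}, u_{i+k}\}$ and $t \in \{u_{j-k}, u_{j+k}\}$ by the triangle inequality, and then a two-case analysis on whether $d(u_i,u_j) \le 2k$ contradicts the isometry of $C$ (using $q > 6k+3$). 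You instead exploit the full strength of Lemma \ref{lem:extremal_disk_counting} --- that the set of indices $m$ with $v \in \dd_k(u_m)$ (resp.\ $v \in \dd_k(e_m)$) is \emph{exactly} an arc of $g$ consecutive residues --- so uniqueness reduces to the arithmetic observation that two residues at cyclic distance $2k$ (resp.\ $2k+1$) can both lie in such an arc only as its two endpoints, with $q > 6k+3$ ruling out the wrap-around solution. Your route buys a cleaner, purely combinatorial argument in which the role of $q > 6k+3$ is completely transparent; the paper's route buys independence from the exactness clause of Lemma \ref{lem:extremal_disk_counting}, needing only the definition of the parts $L_i$ and the isometry of $C$. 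Both are valid, and neither is circular, since Lemmas \ref{lem:extremal_disk_counting} and \ref{lem:ti_is_li} precede this lemma and do not depend on it.
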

	
	\begin{proof}
		That $u_i \in L_i$ is immediate, since $C$ is isometric.
		For each vertex $v\in V$, we show that there is a unique set $L_i$ containing $v$.
		
		Let $g$ be odd.
		By Lemma \ref{lem:extremal_disk_counting}, there is an interval $\{j-k, \dots, j+k\}$ such that $v\in \dd_i$ for all $i \in \{j-k, \dots, j+k\}$ (integers mod $q$).
		Thus $v \in L_j$.
		Suppose now to the contrary that $v$ is in two different sets $L_i$ and $L_j$.
		Since $v$ is in both $L_i$ and $L_j$, we have that $d(v,w) \leq k$ for all $w \in \{u_{i-k}, u_{i+k}, u_{j-k}, u_{j+k}\}$. 
		Thus, for all $s \in \{u_{i-k}, u_{i+k}\}$ and $t\in \{u_{j-k}, u_{j+k}\}$, we have $d(s,t) \leq 2k$ by the triangle inequality.
		There are two cases to consider.
		
		\textit{Case 1:} $d(u_i, u_j) \leq 2k$. 
		Since $C$ is an isometric cycle of length $q>6k+3$, either $d(u_{i+k}, u_{j-k}) > 2k$ or $d(u_{i-k}, u_{j+k}) > 2k$, contradicting the previous statement that $d(s,t) \leq 2k$.

		\textit{Case 2:} $d(u_i, u_j) > 2k$.
		In this case, $d(u_{i+k}, u_{j+k}) > 2k$, a contradiction.
		
		Now suppose $g$ is even.
		By Lemma \ref{lem:ti_is_li}, $L_i = \dd_k(e_{i-k-1}) \cap \dd_k(e_{i+k})$.
		As in the odd case, we have that $v$ is in some set $L_i$ by an application of Lemma \ref{lem:extremal_disk_counting}.
		The argument that $L_i$ is unique is the same as in the case where $g$ is odd.
	\end{proof}
	
	\begin{lem}
		\label{lem:disk_in_interval}
		Let $C = u_0, \dots, u_{q-1}$ be an isometric $q$-cycle of an equatorial graph of girth $g$, and let $i\in [0, q-1]$.
		\begin{itemize}[noitemsep]
			\item If $g$ is odd, then $\dd_k(u_i)  = L_{i-k} \cup \dots \cup L_{i+k}$.
			\item If $g$ is even, then $\dd_k(e_i) = L_{i-k} \cup \dots \cup L_{i+k+1}$. 
		\end{itemize}
	\end{lem}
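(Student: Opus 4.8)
The plan is to translate both set equalities into equivalences between index conditions, leveraging the interval structure supplied by Lemma \ref{lem:extremal_disk_counting} together with the fact (Lemma \ref{lem:li_partition}) that the $L_j$ partition $V$. Since each vertex $v$ lies in a \emph{unique} part $L_j$, membership of $v$ in any disk $\dd_k(u_i)$ (odd girth) or $\dd_k(e_i)$ (even girth) can be decided purely from the index $j$, so each displayed identity reduces to a proximity relation between the disk index $i$ and the part index $j$, which I then invert by a routine manipulation of inequalities modulo $q$.

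\textit{Odd case.} Fix $v$ and let $j$ be the unique index with $v\in L_j$. By Lemma \ref{lem:extremal_disk_counting}, the set of indices $i$ with $v\in\dd_k(u_i)$ is exactly the interval $\{j-k,\dots,j+k\}$ centered at $j$; equivalently $v\in\dd_k(u_i)$ iff $|i-j|\leq k$ cyclically, iff $j\in\{i-k,\dots,i+k\}$. Hence $v\in\dd_k(u_i)$ precisely when $v$ lies in one of $L_{i-k},\dots,L_{i+k}$, which is the asserted equality $\dd_k(u_i)=L_{i-k}\cup\dots\cup L_{i+k}$.

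\textit{Even case.} The strategy is identical, but first I must pin down how the interval of disk indices is aligned with the part index. For fixed $v$, Lemma \ref{lem:extremal_disk_counting} gives that $\{i : v\in\dd_k(e_i)\}$ is an interval of $g=2k+2$ consecutive indices. By Lemma \ref{lem:ti_is_li}, $v\in L_j$ forces $v\in\dd_k(e_{j-k-1})\cap\dd_k(e_{j+k})$, so both $j-k-1$ and $j+k$ lie in that interval. As these two indices are at cyclic distance $g-1$, and $q>6k+3>2(g-1)$ rules out wraparound ambiguity, they must be its two endpoints, so the interval is exactly $\{j-k-1,\dots,j+k\}$. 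Rearranging $j-k-1\leq i\leq j+k$ yields $i-k\leq j\leq i+k+1$, i.e. $j\in\{i-k,\dots,i+k+1\}$; translating back through the partition gives $\dd_k(e_i)=L_{i-k}\cup\dots\cup L_{i+k+1}$.

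The main obstacle is precisely this alignment step in the even case: confirming that the $g$-element index interval produced by Lemma \ref{lem:extremal_disk_counting} runs exactly from $j-k-1$ to $j+k$, rather than being offset, which is where both Lemma \ref{lem:ti_is_li} and the bound $q>6k+3$ do the real work. The odd case is the symmetric and easier analogue, and once the index equivalences are established the remaining content is a direct set-theoretic rewriting using that the $L_j$ are disjoint and cover $V$.
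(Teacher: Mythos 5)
Your proof is correct, but it takes a genuinely different route from the paper's. The paper argues in two stages: first it proves the containment $\dd_k(u_i) \subseteq L_{i-k}\cup\dots\cup L_{i+k}$ geometrically --- if some $v\in\dd_k(u_i)$ lay in $L_j$ with $j$ outside $[i-k,i+k]$, the triangle inequality through $v$ would force $d(u_i,u_{j-k})\leq 2k$ and $d(u_i,u_{j+k})\leq 2k$, contradicting that $C$ is isometric of length $q>6k+3$; it then upgrades containment to equality with a fresh global counting argument (if any $\Lambda_i=L_{i-k}\cup\dots\cup L_{i+k}$ were strictly larger than $\dd_k(u_i)$, then summing $|\Lambda_t|$ over all $t$ and dividing by $g$ would yield $n>\frac{q}{g}M(\delta,g)$). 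You instead extract everything from results already in hand: Lemma \ref{lem:extremal_disk_counting} says each vertex lies in exactly $g$ disks whose indices form an interval, the definition of $L_j$ (odd case) or the easy inclusion of Lemma \ref{lem:ti_is_li} (even case) forces two specific indices at cyclic distance $g-1$ into that interval, so they must be its endpoints, and the interval is exactly aligned with the part containing $v$; inverting the index relation and invoking the partition property of Lemma \ref{lem:li_partition} then delivers both containments simultaneously, with no new counting and no new use of the triangle inequality. What the paper's version buys is independence from the precise interval alignment; what yours buys is brevity and a cleaner reuse of the earlier lemmas. One presentational caveat: in the odd case you state the alignment (``the interval is centered at $j$'') as though it were part of Lemma \ref{lem:extremal_disk_counting}'s statement, but in that lemma $j$ is merely a name for the interval's center. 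You should run the same endpoint argument you spell out in the even case, using $L_j=\dd_k(u_{j-k})\cap\dd_k(u_{j+k})$ by definition (or appeal to uniqueness of the containing part from Lemma \ref{lem:li_partition}); since you explicitly note the odd case is the symmetric analogue, this is cosmetic rather than a gap.
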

	
	\begin{proof}
		Name the equatorial graph $G=(V,E)$.
		First suppose that $g$ is odd.
		For convenience, denote $\Lambda_i = L_{i-k} \cup \dots \cup L_{i+k}$.
		We begin by showing that $\dd_k(u_i) \subseteq \Lambda_i$.
		Assume to the contrary that there exists a vertex $v$ of $\dd_k(u_i)$ that is not in in $\Lambda_i$.
		As $\{L_0, \dots, L_{q-1}\}$ is a partition of $V$, the vertex $v$ is in $L_j$ for some $j \notin [i-k, \dots, i+k]$.
		We thus have that $d(v,w) \leq k$ for all $w\in \{u_i, u_{j-k}, u_{j+k}\}$.
		By the triangle inequality we get both $d(u_i, u_{j-k}) \leq 2k$ and $d(u_i, u_{j+k}) \leq 2k$, contradicting the fact that $C$ is an isometric cycle of length $q > 6k+3$.
		So $\dd_k(u_i) \subseteq \Lambda_i$, and thus $|\Lambda_i| \geq |\dd_k(u_i)| = M(\delta, g)$ (Lemma \ref{lem:extremal_disk_counting}).
		Conversely, suppose that $|\Lambda_i| > |\dd_k(u_i)|$.
		Since $\{L_0, L_1, \dots, L_{q-1}\}$ is a partition of $V$ and each set $L_s$ is a subset of exactly $2k+1 = g$ of the sets $\Lambda_t$, we can bound the order of $G$ by
		\begin{align*}
			n \geq \frac{|\Lambda_0| + \dots + |\Lambda_i| + \dots + |\Lambda_{q-1}|}{g} \geq \frac{(q-1)M(\delta, g) + |\Lambda_i|}{g} > \frac{qM(\delta, g)}{g}.
		\end{align*}
		This contradicts that $G$ is an equatorial graph, so $\dd_k(u_i) = \Lambda_i$.
		
		Now suppose $g$ is even, and let $\Gamma_i = L_{i-k} \cup \dots \cup L_{i+k+1}$.
		Note that $q > 6k+3$.
		Assume, contrary to the statement $\dd_k(e_i) \subseteq \Gamma_i$, that there is a vertex $v$ of $\dd_k(e_i)$ that is in $L_j$ for some $j \notin [i-k, i+k+1]$.
		Thus $d(v, u_{j-k}) \leq k$, $d(v, u_{j+k}) \leq k$ and $d(v, e_i) \leq k$. 
		By the triangle inequality, both $d(u_{j-k}, e_i) \leq 2k$ and $d(u_{j+k}, e_i) \leq 2k$, contradicting that $C$ is isometric.
		So $\dd_k(e_i) \subseteq \Gamma_i$ and thus $|\Gamma_i| \geq |\dd_k(e_i)| = M(\delta, g)$.
		The rest of the argument that $\dd_k(e_i) = \Gamma_i$ is the same as in the odd case --- but take care to note that each $L_i$ is a subset of $2k+2 = g$ of the sets $\Gamma_i$.
	\end{proof}
	
	\begin{lem}
		\label{lem:edges_between_li}
		Let $G$ be an equatorial graph with equator $q$, $C$ an isometric $q$-cycle of $G$, and $\{L_i : i \in [0,q-1] \}$ the partition of $G$ induced by $C$.
		A vertex of $L_i$ is adjacent to a vertex of $L_j$ only if $|j-i| \leq 1$ (difference taken mod $q$).
	\end{lem}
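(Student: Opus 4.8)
The plan is to argue by contradiction directly from the definition $L_\ell = \dd_k(u_{\ell-k}) \cap \dd_k(u_{\ell+k})$ together with the isometry of $C$, using nothing more than the triangle inequality; the argument is uniform across both parities of $g$. Throughout, write $d_C$ for distance measured along $C$, and recall that since $C$ is isometric we have $d(u_s, u_t) = d_C(u_s, u_t)$ for all $s,t$. Suppose $a \in L_i$ and $b \in L_j$ are adjacent, and let $m = d_C(u_i, u_j)$ be the cyclic distance between their indices. If $m \leq 1$ there is nothing to prove, so assume $m \geq 2$ and seek a contradiction. By reversing the orientation of $C$ if necessary, we may assume $j \equiv i + m \pmod q$ with $2 \leq m \leq \lfloor q/2 \rfloor$; the symmetric case $j \equiv i - m$ follows by reflecting $C$.

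First I would extract the relevant distance bounds from disk membership. From $a \in L_i = \dd_k(u_{i-k}) \cap \dd_k(u_{i+k})$ we get $d(a, u_{i-k}) \leq k$ and $d(a, u_{i+k}) \leq k$, and from $b \in L_j \subseteq \dd_k(u_{j+k})$ we get $d(b, u_{j+k}) \leq k$. Since $ab \in E$, the triangle inequality gives the upper bound $d(a, u_{j+k}) \leq d(a,b) + d(b, u_{j+k}) \leq k+1$. For matching lower bounds I would use the two reference vertices $u_{i+k}$ and $u_{i-k}$ together with the isometry of $C$:
\begin{align*}
	d(a, u_{j+k}) &\geq d(u_{i+k}, u_{j+k}) - d(a, u_{i+k}) \geq d_C(u_{i+k}, u_{j+k}) - k, \\
	d(a, u_{j+k}) &\geq d(u_{i-k}, u_{j+k}) - d(a, u_{i-k}) \geq d_C(u_{i-k}, u_{j+k}) - k.
\end{align*}

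Now the two cycle distances are read off from the index gaps. Since $j + k \equiv i + (m + k)$, the vertices $u_{i+k}$ and $u_{j+k}$ are $m$ apart around $C$, so $d_C(u_{i+k}, u_{j+k}) = m$ (as $m \leq q/2$); combined with the upper bound this yields $m - k \leq k+1$, that is $m \leq 2k+1$. The vertices $u_{i-k}$ and $u_{j+k}$ are separated by the two arcs of lengths $m + 2k$ and $q - m - 2k$, so $d_C(u_{i-k}, u_{j+k}) = \min(m + 2k,\, q - m - 2k) \leq 2k+1$. This last inequality is where the hypothesis $q > 6k+3$ does its work, and it is the only delicate point, so I would split on which arc realises the minimum. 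If the short arc is $m + 2k$, then $m + 2k \leq 2k+1$ forces $m \leq 1$, contradicting $m \geq 2$. If instead the short arc is $q - m - 2k$, then $q - m - 2k \leq 2k+1$ gives $m \geq q - 4k - 1 \geq 2k+3$ (using $q \geq 6k+4$), which contradicts $m \leq 2k+1$. Either way we reach a contradiction, so $m \leq 1$.

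The main obstacle is precisely this arc casework: without the upper bound $m \leq 2k+1$ obtained from the reference vertex $u_{i+k}$, the ``long arc'' alternative $q - m - 2k \leq 2k+1$ cannot be excluded, and it is the interplay between that bound and the girth-dependent lower bound $q \geq 6k+4$ that closes the gap. Note that the whole argument uses only the defining intersection for $L_\ell$ and the isometry of $C$, so no separate treatment of even girth (and no appeal to Lemma~\ref{lem:ti_is_li} or Lemma~\ref{lem:disk_in_interval}) is required.
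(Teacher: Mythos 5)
Your proof is correct and takes essentially the same route as the paper's: both argue by contradiction, extract distance bounds from the disk memberships defining $L_i$ and $L_j$ via the triangle inequality, and then exploit the isometry of $C$ together with $q > 6k+3$ in a two-case analysis to force a contradiction. The differences are cosmetic --- you normalize $j = i+m$, work with three reference vertices instead of all four of $u_{i\pm k}$, $u_{j\pm k}$, and spell out explicitly the arc casework that the paper defers to ``a similar argument to that used in Lemma~\ref{lem:li_partition}.''
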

	
	\begin{proof}
		Let $g$ be the girth of $G$, let $k = \lceil \frac{g}{2} \rceil - 1$ and remember $q > 6k+3$.
		Assume to the contrary that there is an edge from some vertex $x \in L_i$ to $y \in L_j$ where $|i-j| \geq 2$ (difference taken mod $q$).
		Note that $d(x, u_{i-k}) = k = d(x, u_{i+k})$, and similarly $d(y, u_{j-k}) = k = d(y, u_{j+k})$.
		By the triangle inequality, for all $s \in \{u_{i-k}, u_{i+k}\}$ and $t \in \{u_{j-k}, u_{j+k}\}$, we have that $d(s,t) \leq 2k+1$.
		There are two cases to consider: when $d(u_i, u_j) \leq 2k+1$, and when $d(u_i, u_j) > 2k+1$.
		We obtain a contradiction by a similar argument to that used in Lemma \ref{lem:li_partition}.
	\end{proof}
	
	\begin{prop}
		\label{prop:all_in_isocycles}
		Every vertex of an equatorial graph is in some isometric $q$-cycle.
	\end{prop}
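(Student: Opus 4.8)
The plan is to exploit the partition $\{L_0,\dots,L_{q-1}\}$ as, essentially, a graph homomorphism onto the cycle $C$, and to produce for each vertex $v$ a $q$-cycle that projects isomorphically onto $C$. First I would fix an isometric $q$-cycle $C = u_0,\dots,u_{q-1}$ and the partition it induces (Definition \ref{def:partition}), and define $\phi\colon V \to \{0,\dots,q-1\}$ by letting $\phi(w)$ be the unique index with $w \in L_{\phi(w)}$; this is well defined by Lemma \ref{lem:li_partition}. By Lemma \ref{lem:edges_between_li}, adjacent vertices have $\phi$-values differing by at most $1$ modulo $q$, so the map $\rho\colon w \mapsto u_{\phi(w)}$ is a homomorphism onto $C$ (viewing $C$ as reflexive) that fixes $C$ pointwise. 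In particular $\rho$ cannot increase distances: tracing the image of a geodesic gives $d_C(u_{\phi(x)}, u_{\phi(y)}) \le d_G(x,y)$ for all $x,y$.

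With $\rho$ in hand, the proposition reduces to a purely constructive statement: for each $v \in L_i$ it suffices to build a cycle $D$ of length $q$ through $v$ that meets every part $L_j$ exactly once. Indeed, such a $D$ is sent by $\rho$ bijectively and adjacency-preservingly onto $C$, so $\rho|_D$ is a cycle isomorphism and $d_D(x,y) = d_C(\rho x, \rho y)$. Combining this with the bound above and the trivial inequality $d_G(x,y) \le d_D(x,y)$ forces $d_G(x,y) = d_D(x,y)$ for all $x,y$ on $D$. Hence $D$ is isometric, and being of length $q$ it is an isometric $q$-cycle through $v$.

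To construct $D$, I would first note that the definition of $L_i$ together with the isometry of $C$ and $d_C(u_{i-k},u_{i+k}) = 2k$ forces $d(v,u_{i+k}) = d(v,u_{i-k}) = k$ (each distance is at most $k$ and their sum is at least $2k$). Choose geodesics $P^{+} = (v,\dots,u_{i+k})$ and $P^{-} = (v,\dots,u_{i-k})$, each of length $k$. Along $P^{+}$ the value of $\phi$ starts at $i$, ends at $i+k$, and changes by at most one per edge; since $\phi$ must achieve a net increase of $k$ modulo $q$ over exactly $k$ edges and $k < q/2$, each edge must raise $\phi$ by exactly one, so $P^{+}$ passes through $L_i, L_{i+1},\dots,L_{i+k}$ with one vertex in each, and symmetrically $P^{-}$ passes through $L_i,\dots,L_{i-k}$. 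Gluing $P^{+}$, the long arc $u_{i+k},u_{i+k+1},\dots,u_{i-k}$ of $C$, and the reverse of $P^{-}$ yields a closed walk $D$ of length $k + (q-2k) + k = q$ whose index rises by exactly one at each step.

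The crux, and the step I expect to be most delicate, is verifying that $D$ is a genuine cycle rather than a walk with repetitions. This is precisely where the ``$\phi$ increments by exactly one'' observation does the real work: the interiors of $P^{+}$, $P^{-}$ and the long arc occupy the pairwise-disjoint index ranges $\{i+1,\dots,i+k-1\}$, $\{i-k+1,\dots,i-1\}$ and $\{i+k+1,\dots,i-k-1\}$, while the three junctions $v$, $u_{i+k}$, $u_{i-k}$ are the only shared vertices. Since the parts $L_j$ are pairwise disjoint (Lemma \ref{lem:li_partition}), $D$ visits each $L_j$ exactly once and all $q$ of its vertices are distinct, so $D$ is the desired $q$-cycle through $v$. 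I would finally remark that the argument is uniform in the parity of $g$, as every ingredient --- the distance computation, the homomorphism $\rho$, and the geodesic-threading step --- uses only the partition and the isometry of $C$.
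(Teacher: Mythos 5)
Your proof is correct and takes essentially the same route as the paper's: both establish $d(v,u_{i-k})=d(v,u_{i+k})=k$, splice two $k$-geodesics through $v$ in place of the short arc of $C$, and invoke Lemma \ref{lem:edges_between_li} to conclude that the resulting $q$-cycle meets each part $L_j$ exactly once and is therefore isometric. Your explicit homomorphism $\rho$ is a repackaging of the paper's final step (and anticipates Remark \ref{rem:equatorial_retract}), and your check that the glued walk has no repeated vertices is a welcome detail the paper leaves implicit.
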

	
	\begin{proof}
		Let $C = u_0, \dots, u_{q-1}$ be an isometric $q$-cycle of the equatorial graph $G=(V,E)$, and let $v$ be an arbitrary vertex of $G$.
		Since the sets $L_i$ partition $V$ (Lemma \ref{lem:li_partition}), the vertex $v$ is in some set $L_i = \dd_k(u_{i-k}) \cap \dd_k(u_{i+k})$.
		Since $C$ is isometric, we must have that $d(u_{i-k}, v) = k = d(v, u_{i+k})$.
		Thus $v$ is the middle vertex of a length $2k$ geodesic $P = u_{i-k}, \dots, v, \dots, u_{i+k}$.
		Create a new cycle $C' = (C - \{u_{i-k+1}, \dots, u_{i+k-1}\}) \cup P$ from $C$ by removing the shorter path of $C$ between $u_{i-k}$ and $u_{i+k}$, and replacing it with the path $P$.
		It is clear that $C'$ has length $q$ and contains $v$.
		We now show that $C'$ is isometric.
		Relabel the vertices of $C' = w_0, w_1, \dots, w_{q-1}$ so that $w_0 = u_0$, $w_{i-k} = u_{i-k}$, $w_i = v$ and so on.
		Thus, for all $j \notin [i-k+1, i+k-1]$, we have $w_j \in L_j$ by Lemma \ref{lem:li_partition}.
		Since $P$ is a path of length $2k$ from a vertex of $L_{i-k}$ to $L_{i+k}$, we apply Lemma \ref{lem:edges_between_li} to deduce that $w_j \in L_j$ for all $j \in [i-k+1, i+k-1]$.
		So $w_j \in L_j$ for all $j \in [0, q-1]$.
		By Lemma \ref{lem:edges_between_li}, any path from some vertex $w_s \in C'$ to $w_t \in C'$ has length at least $|s-t|$ (difference mod $q$), so $C'$ is isometric.
	\end{proof} 
	
	\begin{prop}
		\label{prop:equatorial_regular}
		Every equatorial graph is regular.
	\end{prop}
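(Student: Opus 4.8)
The plan is to combine the tightness of the disk counts from Lemma \ref{lem:extremal_disk_counting} with the fact, established in Proposition \ref{prop:all_in_isocycles}, that every vertex lies on some isometric $q$-cycle. The crucial observation is that the equalities $d_k(u_i) = M(\delta, g)$ (odd girth) and $d_k(e_i) = M(\delta, g)$ (even girth) are equalities in the Moore bound, and such equality forces the local structure to be a perfect Moore tree. In particular it forces the relevant vertices to have degree exactly $\delta$. Since \emph{every} vertex sits on some isometric $q$-cycle, every vertex is forced to have degree $\delta$, and so $G$ is regular.

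First I would isolate the local statement in the odd case: if $v$ is a vertex of a graph with girth $g = 2k+1$ and minimum degree $\delta$ satisfying $d_k(v) = M(\delta, g)$, then $v$ has degree exactly $\delta$. To prove it, I would unpack the breadth-first-search counting underlying Remark \ref{rem:k_degree}. Writing $V_0, V_1, \dots, V_k$ for the BFS levels about $v$, the girth condition guarantees that up to level $k$ each vertex has a unique parent, there are no edges inside a level, and distinct parents have disjoint sets of children; this yields $|V_1| \geq \delta$ together with $|V_{i+1}| \geq (\delta-1)|V_i|$ for $1 \leq i \leq k-1$, and hence
\[
d_k(v) = \sum_{i=0}^{k} |V_i| \geq 1 + \sum_{i=1}^{k} \delta(\delta-1)^{i-1} = M(\delta, g).
\]
Equality $d_k(v) = M(\delta, g)$ forces every inequality in this chain to be tight; in particular $|V_1| = \delta$, which is exactly the statement that $v$ has degree $\delta$. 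The even case is analogous, run from an edge $e = xy$ with level-zero set $\{x,y\}$: the same reasoning shows that equality $d_k(e) = M(\delta, g)$ forces both $x$ and $y$ to have degree exactly $\delta$.

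With the local statement in hand the proof concludes quickly. Let $v$ be an arbitrary vertex of the equatorial graph $G$. By Proposition \ref{prop:all_in_isocycles}, $v$ lies on some isometric $q$-cycle $C'$, and by Lemma \ref{lem:extremal_disk_counting} applied to $C'$ we have $d_k(u_i') = M(\delta, g)$ for every vertex $u_i'$ of $C'$ when $g$ is odd, and $d_k(e_i') = M(\delta, g)$ for every edge $e_i'$ of $C'$ when $g$ is even. In the odd case $v = u_i'$ for some $i$, so the local statement gives degree $\delta$; in the even case $v$ is an endpoint of one of the two cycle edges incident with it, and the local statement again gives degree $\delta$. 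As $v$ was arbitrary, $G$ is $\delta$-regular.

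I expect the only delicate point to be the bookkeeping of the equality conditions in the Moore-bound counting — specifically, confirming that tightness of the \emph{total} disk count $d_k(v) = M(\delta, g)$ propagates all the way back to tightness of the first inequality $|V_1| \geq \delta$, rather than being absorbed by slack at deeper levels. This is handled by noting that each level inequality contributes nonnegative slack to the sum while the total slack is zero, so every individual slack must vanish. Everything else is routine, since the substantive structural work (exact disk sizes, and universal membership in isometric $q$-cycles) has already been carried out in the preceding results.
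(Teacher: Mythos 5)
Your proposal is correct and follows essentially the same route as the paper: both combine Proposition \ref{prop:all_in_isocycles} (every vertex lies on an isometric $q$-cycle) with the exact disk sizes of Lemma \ref{lem:extremal_disk_counting}, and use the breadth-first-search Moore-bound counting to link disk size to degree. The paper merely phrases it contrapositively (a vertex of degree greater than $\delta$ would force $|\dd_k(v)| > M(\delta,g)$, or $|\dd_k(e)| > M(\delta,g)$ in the even case), whereas you phrase it as an equality-forcing argument; the content is identical.
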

	
	\begin{proof}
		Suppose to the contrary that an equatorial graph $G$ has minimum degree $\delta$, girth $g$ and contains a vertex $v$ with $d(v) > \delta$.
		By Proposition \ref{prop:all_in_isocycles}, there is some isometric $q$-cycle $C = u_0, \dots, u_i = v, u_{q-1}$ containing $v$ as its $i$th vertex.
		First suppose $g$ is odd.
		Since $G$ has girth $g$ and minimum degree $\delta$, we have
		\begin{align*}
			|\dd_k(v)| &\geq |\{v\}| + |N_1(v)| + \dots + |N_k(v)|\\
				&\geq 1 + d(v) + d(v)(\delta - 1) + \dots + d(v)(\delta - 1)^{k-1}\\ 
				&> M(\delta, g).
		\end{align*}
		However $|\dd_k(v)| = M(\delta, g)$ by Lemma \ref{lem:extremal_disk_counting} --- a contradiction!
		Now suppose $g$ is even, and let $e_i = vu_{i+1}$.
		As $G$ has girth $g$ and minimum degree $\delta$, we get
		\begin{align*}
			|\dd_k(vu_{i+1})| &\geq |\{vu_{i+1}\}| + |N_1(vu_{i+1})| + \dots + |N_k(vu_{i+1})|\\
			&\geq 2 + [(d(v) - 1) + (\delta - 1)] + \sum_{i=1}^k \left[(d(v) - 1)(\delta - 1)^{i-1} + (\delta - 1)^i\right]\\
			&> M(\delta, g).
		\end{align*}
		As in the odd case, this contradicts Lemma \ref{lem:extremal_disk_counting}.
	\end{proof}
	
	The next results establish that the partition $\{L_i : i\in [0,q-1]\}$ of an equatorial graph is unique. 
	I.e., every isometric $q$-cycle induces the same partition.
	
	\begin{lem}
		\label{lem:one_vertex_per_li}
		Let $G$ be an equatorial graph, $C$ an isometric $q$-cycle of $G$ and $\{L_0, \dots, L_{q-1}\}$ the partition of $G$ induced by $C$. 
		If $D$ is any isometric $q$-cycle of $G$, then $|V(D) \cap L_i| = 1$ for all $i\in [0,q-1]$.
	\end{lem}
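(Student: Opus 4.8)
The plan is to reduce the statement to the injectivity of a single map. Write $\{L_i\}$ for the partition induced by $C$, and --- applying Definition~\ref{def:partition} and Lemma~\ref{lem:li_partition} to $D$ in place of $C$ --- write $\{L'_t\}$ for the partition induced by $D$, so that $u_i \in L_i$ and $d_t \in L'_t$. Let $\ell \colon V \to \{0,\dots,q-1\}$ be the ``level'' map sending each vertex to the index of the $C$-part containing it. By Lemma~\ref{lem:edges_between_li}, $\ell$ is $1$-Lipschitz: adjacent vertices have levels differing by at most $1$ modulo $q$, so any two vertices at distance $m$ have levels at cyclic distance at most $m$. Since $\{L_i\}$ has exactly $q$ parts and $|V(D)| = q$, it suffices to show that $t \mapsto \ell(d_t)$ is injective; an injection between two $q$-element sets is a bijection, which is precisely the assertion that $|V(D) \cap L_i| = 1$ for every $i$.

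The heart of the argument is the \emph{ball equality} $\dd_k(d_t) = \dd_k(u_{\ell(d_t)})$ for every $t$ (I treat the odd-girth case; the even case is analogous). For containment, if $w \in \dd_k(d_t)$ then $d(w, d_t) \le k$, so by $1$-Lipschitzness the levels of $w$ and $d_t$ are at cyclic distance at most $k$; hence $w \in L_{\ell(d_t)-k} \cup \dots \cup L_{\ell(d_t)+k} = \dd_k(u_{\ell(d_t)})$, the last equality being Lemma~\ref{lem:disk_in_interval}. The reverse inclusion comes from a cardinality count: Lemma~\ref{lem:extremal_disk_counting} applied to the isometric $q$-cycle $C$ gives $|\dd_k(u_{\ell(d_t)})| = M(\delta, g)$, and --- this is the key move --- the \emph{same} lemma applied to the isometric $q$-cycle $D$ gives $|\dd_k(d_t)| = M(\delta,g)$. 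Two sets of equal finite cardinality, one contained in the other, must coincide, so the balls are equal.

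To conclude, I observe that $t \mapsto \dd_k(d_t)$ is injective. Indeed, by Lemma~\ref{lem:disk_in_interval} applied to $D$ we have $\dd_k(d_t) = L'_{t-k} \cup \dots \cup L'_{t+k}$, a union of $2k+1$ consecutive parts of $\{L'_j\}$. If $\dd_k(d_s) = \dd_k(d_t)$ with $s \ne t$, then since $q > 6k+3$ the two index windows $[s-k,s+k]$ and $[t-k,t+k]$ of length $2k+1$ are distinct, so some part $L'_j$ belongs to exactly one of the two unions; as the parts are nonempty and pairwise disjoint, this contradicts their equality. Now if $\ell(d_s) = \ell(d_t)$, then $\dd_k(d_s) = \dd_k(u_{\ell(d_s)}) = \dd_k(u_{\ell(d_t)}) = \dd_k(d_t)$ by the ball equality, forcing $s = t$; hence $t \mapsto \ell(d_t)$ is injective, as required. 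For even girth one runs the same argument with the edge-disks $\dd_k(e_i)$ and $\dd_k(f_t)$, where $f_t = d_td_{t+1}$, using Lemmas~\ref{lem:ti_is_li}, \ref{lem:disk_in_interval} and \ref{lem:extremal_disk_counting}; here one first rules out any ``level-preserving'' edge of $D$, since such an edge would confine $\dd_k(f_t)$ to a union of only $2k+1 < g$ parts, contradicting $|\dd_k(f_t)| = M(\delta,g)$.

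I expect the main obstacle to be locating and justifying the ball equality rather than the surrounding bookkeeping. A first instinct is to argue purely from $1$-Lipschitzness --- but a $1$-Lipschitz self-map of $\mathbb{Z}/q$ can fold, so this alone gives neither injectivity nor surjectivity of $\ell|_{V(D)}$. A second instinct, that distinct vertices have distinct $k$-balls, is simply false for equatorial graphs of small local diameter; for instance in the Petersen-based graphs $\mathcal{F}(3,5,q)$ every radius-$2$ ball inside a copy coincides. What rescues the argument is that the cardinality equality $|\dd_k(d_t)| = M(\delta,g)$ is available precisely because $D$ is itself an isometric $q$-cycle in an equatorial graph, so Lemma~\ref{lem:extremal_disk_counting} applies to it verbatim; injectivity is then extracted not from the vertices but from the \emph{partition indices}, where disjointness and nonemptiness of the parts do all the work.
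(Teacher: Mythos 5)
Your proof is correct, but it takes a genuinely different route from the paper's. The paper argues by contradiction: if $D$ missed some part, then by Lemma~\ref{lem:edges_between_li} the cycle $D$ would be confined to at most $\lfloor q/2 \rfloor + 1$ consecutive parts, and a discrete intermediate-value argument on the unreduced level difference $\varphi$ along $D$ produces two vertices of $D$ at distance at least $\lceil q/2 \rceil - 1 > 2k$ apart on $D$ that lie in a common part $L_i$, hence are within distance $2k$ in $G$ --- contradicting that $D$ is isometric. That argument uses only Lemma~\ref{lem:edges_between_li} and the definition of the parts, never the extremal counting machinery. You instead prove injectivity of the level map directly, and your key move --- applying Lemma~\ref{lem:extremal_disk_counting} to $D$ as well as to $C$, which is legitimate since that lemma is stated for an arbitrary isometric $q$-cycle of an equatorial graph --- upgrades the Lipschitz containment $\dd_k(d_t) \subseteq \dd_k(u_{\ell(d_t)})$ to an equality of disks by a cardinality count; injectivity then falls out of the disjointness and nonemptiness of the parts of $D$'s own induced partition (Lemmas~\ref{lem:li_partition} and~\ref{lem:disk_in_interval} applied to $D$). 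Your route is heavier on machinery but buys more: the disk equality and the resulting rigidity of the level map essentially yield Proposition~\ref{prop:unique_partition} with little extra work, whereas the paper's argument gives only the statement at hand. Two small caveats. First, your even-girth case is a sketch: after ruling out level-preserving edges of $D$ (that part of your argument is fine), edge-disk injectivity is a statement about edges of $D$, and you still owe the short step back to vertex levels --- for instance, if two distinct vertices of $D$ shared a level $i$, then their four incident $D$-edges (all distinct, since level-preserving edges are excluded) each have edge-disk equal to $\dd_k(e_{i-1})$ or $\dd_k(e_i)$, so by pigeonhole two distinct edges of $D$ have equal edge-disks, contradicting your injectivity. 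Second, your window-distinctness step needs only $2k+1 < q$, not the full hypothesis $q > 6k+3$; citing the stronger bound is harmless but slightly obscures what is actually used.
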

	
	\begin{proof}
		Let $G$ be an equatorial graph of girth $g$ and equator $q$, $C = u_0, \dots, u_{q-1}$ an isometric $q$-cycle and $\{L_i : i\in  [0,q-1]\}$ the partition induced by $C$.
		Further, suppose that $D = v_0, \dots, v_{q-1}$ is another isometric $q$-cycle of $G$.
		Assume to the contrary that some part $L_j$ of the partition induced by $C$ does not contain any vertex of $D$.
		Since every edge of $G$ either lies within a part $L_i$, or between vertices of two consecutive parts $L_i$ and $L_{i+1}$ (Lemma \ref{lem:edges_between_li}), the cycle $D$ spans at most $\lfloor \frac{q}{2} \rfloor + 1$ consecutive parts $L_i$.
		Up to relabeling of the vertices of $C$ (and thus renaming parts $L_i$), we have that 
		\[
			V(D) \subseteq L_0 \cup L_1 \cup \dots \cup L_t, \text{ with } t \leq \left\lfloor \frac{q}{2} \right\rfloor.
		\]
		
		We prove that $D$ is not isometric by showing that $d_G(v_s, v_t) \leq 2k$ for some pair $v_s$ and $v_t$ of vertices such that $d_D(v_s, v_t) \geq \lceil \frac{q}{2} \rceil - 1 > 2k$. 
		Suppose $x$ and $y$ are any two vertices of $G$.
		There exist parts $L_i$ and $L_j$ of the partition induced by $C$ such that $x \in L_i$ and $y\in L_j$.
		Define the function $\varphi : V\times V \to \mathbb{Z}$ by $\varphi(x, y) = i - j$ (we do \textit{not} consider this difference mod $q$). 
		Note that $\varphi(y,x) = j - i = - \varphi(x,y)$, and that $\varphi(x,y) = 0$ if and only if $x$ and $y$ are in the same set $L_i$.
		By Lemma \ref{lem:edges_between_li}, every neighbor $x'$ of $x$ is in $L_{i-1} \cup L_i \cup L_{i+1}$. 
		Thus if $x$ is adjacent to $x'$ then $|\varphi(x,y) - \varphi(x', y)| \in \{-1,0,1\}$.
		Similarly, if $y'$ is adjacent to $y$ then $|\varphi(x,y) - \varphi(x, y')| \in \{-1,0,1\}$.
		Consider the following sequence of values of $\varphi$:
		\[
			\varphi(v_0, v_{\lceil \frac{q}{2} \rceil}), \varphi(v_1, v_{\lceil \frac{q}{2} \rceil}), \varphi(v_1, v_{\lceil \frac{q}{2} \rceil + 1}), \varphi(v_2, v_{\lceil \frac{q}{2} \rceil + 1}), \varphi(v_2, v_{\lceil \frac{q}{2} \rceil + 2}), \dots, \varphi(v_{\lceil \frac{q}{2} \rceil}, v_0).
		\]
		Consecutive terms of the sequence are either the same, or differ by exactly 1, and $\varphi(v_0, v_{\lceil \frac{q}{2} \rceil}) = -\varphi(v_{\lceil \frac{q}{2} \rceil}, v_0)$.
		Therefore there exists some some term $\varphi(v_s, v_t) = 0$, so $v_s$ and $v_t$ are both in the same part $L_i$ of the partition induced by $C$.
		The sequence is constructed so that either $d_D(v_s, v_t) = \lceil \frac{q}{2} \rceil - 1$ or $d_D(v_s, v_t) = \lceil \frac{q}{2} \rceil$.
		However, each vertex of $L_i$ is distance $k$ from $u_{i-k}$, so
		\[
			d_G(v_s, v_t) \leq d_G(v_s, u_{i-k}) + d_G(u_{i-k}, v_t) \leq 2k < \left\lceil \frac{q}{2} \right\rceil - 1.
		\] 
		This contradicts the fact that $D$ is isometric.
	\end{proof}
	
	\begin{prop}
		\label{prop:unique_partition}
		Let $G$ be an equatorial graph. 
		Every isometric $q$-cycle of $G$ induces the same partition.
	\end{prop}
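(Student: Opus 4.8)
The plan is to fix two isometric $q$-cycles, compare the partitions they induce, and show these coincide part-for-part after a suitable relabelling. Write $C = u_0, \dots, u_{q-1}$ with induced partition $\{L_i\}$, and let $D = v_0, \dots, v_{q-1}$ be a second isometric $q$-cycle with induced partition $\{L'_i\}$. By Lemma \ref{lem:one_vertex_per_li}, each part $L_i$ meets $V(D)$ in exactly one vertex, so there is a bijection $c \colon \mathbb{Z}_q \to \mathbb{Z}_q$ with $v_j \in L_{c(j)}$.

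First I would pin down the bijection $c$. By Lemma \ref{lem:edges_between_li}, consecutive vertices $v_j, v_{j+1}$ of $D$ lie in parts whose indices differ by at most $1$ modulo $q$; since each part contains exactly one vertex of $D$, consecutive indices cannot be equal, so they differ by exactly $\pm 1$. A bijection of $\mathbb{Z}_q$ all of whose consecutive steps are $\pm 1$ is necessarily a rotation or a reflection, i.e. $c(j) = c(0) \pm j$. Relabelling $D$ (rotating its base point and, if necessary, reversing its orientation) only permutes the parts $L'_i$ and hence leaves the collection $\{L'_i\}$ unchanged, so I may assume $v_i \in L_i$ for all $i$.

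The key step is the claim that the disks about corresponding vertices coincide: $\dd_k(v_i) = \dd_k(u_i)$ in the odd case (and $\dd_k(v_i v_{i+1}) = \dd_k(e_i)$ in the even case). For the inclusion $\dd_k(v_i) \subseteq \dd_k(u_i)$, I note that Lemma \ref{lem:edges_between_li} forces the part-index to change by at most $1$ along any edge, so any vertex of $L_m$ has distance at least the circular distance between $m$ and $i$ from the vertex $v_i \in L_i$; hence every $w \in \dd_k(v_i)$ lies in $L_{i-k} \cup \dots \cup L_{i+k} = \dd_k(u_i)$, the last equality being Lemma \ref{lem:disk_in_interval}. The reverse inclusion I would obtain not directly but by a cardinality squeeze: both $\dd_k(v_i)$ and $\dd_k(u_i)$ have exactly $M(\delta, g)$ vertices by Lemma \ref{lem:extremal_disk_counting} (applied to $D$ and to $C$ respectively), so a containment of equal finite sets is an equality. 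This is the heart of the argument, and it is precisely here that equatoriality (extremality of the order) is used.

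Finally I would assemble the conclusion: since $\dd_k(v_i) = \dd_k(u_i)$ for every $i$, the definition of the induced partition gives $L'_i = \dd_k(v_{i-k}) \cap \dd_k(v_{i+k}) = \dd_k(u_{i-k}) \cap \dd_k(u_{i+k}) = L_i$, so $D$ induces exactly the partition $\{L_i\}$. The even-girth case runs in parallel, replacing vertices by the edges $f_i = v_i v_{i+1}$, using the edge-disk interval identity of Lemma \ref{lem:disk_in_interval} and the edge representation $L_i = \dd_k(e_{i-k-1}) \cap \dd_k(e_{i+k})$ of Lemma \ref{lem:ti_is_li}. I expect the only real subtlety to be the bookkeeping in the reflection subcase and the verification that reindexing does not alter the induced collection; the mathematical content rests entirely on the cardinality squeeze above.
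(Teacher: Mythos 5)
Your proposal is correct, and its opening step (using Lemma \ref{lem:one_vertex_per_li} to get a bijection between $V(D)$ and the parts $L_i$, then Lemma \ref{lem:edges_between_li} to force unit steps and hence a rotation/reflection, so that after relabelling $v_i \in L_i$) is exactly the paper's first step. Where you diverge is the core of the argument. The paper finishes pointwise: it takes an arbitrary vertex $w \in L_i \cap M_j$, applies the triangle inequality through $w$ to get $d(s,t) \le 2k$ for all $s \in \{u_{i-k}, u_{i+k}\}$ and $t \in \{v_{j-k}, v_{j+k}\}$, and then uses the fact that the distance between vertices of $L_a$ and $L_b$ is at least the circular distance between $a$ and $b$ to intersect three index intervals and conclude $j = i$. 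You instead prove the stronger intermediate fact that corresponding disks coincide, $\dd_k(v_i) = \dd_k(u_i)$ (respectively $\dd_k(v_iv_{i+1}) = \dd_k(e_i)$ for even girth), via a one-sided containment from Lemmas \ref{lem:edges_between_li} and \ref{lem:disk_in_interval} plus the cardinality squeeze $|\dd_k(v_i)| = |\dd_k(u_i)| = M(\delta,g)$ from Lemma \ref{lem:extremal_disk_counting} applied to both cycles; equality of the partitions then drops out of Definition \ref{def:partition} (and Lemma \ref{lem:ti_is_li} in the even case) with no further work. Your route buys a cleaner finish --- no mod-$q$ interval bookkeeping --- and a reusable fact about disks of arbitrary isometric $q$-cycles, at the cost of invoking two more of the structural lemmas; the paper's route is leaner on prerequisites, needing only the relabelling lemmas, Lemma \ref{lem:li_partition}, and the triangle inequality. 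Since all the lemmas you cite are established before this proposition, your proof is valid as written.
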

	
	\begin{proof}
		Let $G$ be an equatorial graph of girth $g$, equator $q$ and set $k = \lceil \frac{g}{2} \rceil - 1$.
		Suppose $C = u_0, \dots, u_{q-1}$ and $D = v_0, \dots, v_{q-1}$ are two isometric $q$-cycles of $G$.
		Let $\mathbb{L} = \{L_i : i \in [0,q-1] \}$ be the partition induced by $C$ and $\mathbb{M} = \{M_i : i \in [0,q-1] \}$ be the partition induced by $D$.
		By Lemma \ref{lem:one_vertex_per_li}, we have that each $L_i$ contains a unique vertex of $D$. 
		By Lemma \ref{lem:edges_between_li}, two vertices of $D$ are adjacent exactly when they are in consecutive parts $L_i$ and $L_{i+1}$.
		Therefore, up to relabeling of the vertices of $D$ (and thus also renaming the parts of $\mathbb{M}$), we have that $v_i \in L_i$ for all $i\in [0, q-1]$.
		Also note that for all $i\in [0, q-1]$, both $u_i \in L_i$ and $v_i \in M_i$ by Lemma \ref{lem:li_partition}.
		To prove that $\mathbb{L} = \mathbb{M}$, it suffices to prove that if a vertex $w$ is in $L_i \cap M_j$ then $i = j$.
		
		Let $w$ be any vertex of $G$, and suppose $w$ is in $L_i \cap M_j$.
		Since $w \in L_i$, we have $d(w, u_{i-k}) \leq k$ and $d(w, u_{i+k}) \leq k$.
		Since $w \in M_j$, we get $d(w, v_{j-k}) \leq k$ and $d(w, u_{j+k}) \leq k$.
		By the triangle inequality, we have that for all $s \in \{u_{i-k}, u_{i+k}\}$ and $t \in \{v_{j-k}, v_{j+k}\}$, $d(s,t) \leq 2k$.
		Since $d(u_{i-k}, v_{j-k}) \leq 2k$ (and also $d(u_{i+k}, v_{j+k}) \leq 2k$), we have $j \in [i-2k, i+2k]$ (interval and indices mod $q$).
		As $d(u_{i+k}, v_{j-k}) \leq 2k$, we further restrict $j \in [i, i+2k]$.
		Similarly $d(u_{i-k}, v_{j+k}) \leq 2k$ implies $j \in [i-2k, i]$.
		Thus $j = i$, completing the proof.
	\end{proof}
	
	Let $u$ be a vertex of an equatorial graph, and suppose it is in part $L_i$ of the partition induced by isometric $q$-cycles.
	Since $u$ belongs to some isometric $q$-cycle, and the neighbors of $u$ on this cycle are in $L_{i-1}$ and $L_{i+1}$, we have Corollary \ref{cor:neighbors_in_li}.
	
	\begin{cor}
		\label{cor:neighbors_in_li}
		Let $u$ be a vertex of an equatorial graph in part $L_i$ of the partition induced by isometric cycles.
		Then $u$ has a neighbor in $L_{i-1}$ and a neighbor in $L_{i+1}$.
	\end{cor}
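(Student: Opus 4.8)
The plan is to leverage Proposition \ref{prop:all_in_isocycles}, which guarantees that every vertex lies on \emph{some} isometric $q$-cycle, together with the canonical nature of the partition established in Proposition \ref{prop:unique_partition}. The key observation is that since the partition induced by every isometric $q$-cycle is the same, we may freely pass to whichever isometric $q$-cycle is most convenient for a given vertex $u$.

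\begin{proof}
Let $u$ be a vertex of an equatorial graph $G$, and suppose $u \in L_i$ in the canonical partition $\{L_j : j \in [0,q-1]\}$. By Proposition \ref{prop:all_in_isocycles}, there exists an isometric $q$-cycle $D = w_0, w_1, \dots, w_{q-1}$ of $G$ containing $u$; say $u = w_m$ for some index $m$. By Proposition \ref{prop:unique_partition}, the cycle $D$ induces the same partition as any other isometric $q$-cycle, so (after relabeling the indices of $D$ cyclically) we have $w_j \in L_j$ for every $j \in [0,q-1]$. In particular $u = w_m$ lies in $L_m$, and since the partition is a genuine partition (Lemma \ref{lem:li_partition}) we conclude $m = i$, so $u = w_i$.

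The two neighbors of $u$ along the cycle $D$ are $w_{i-1}$ and $w_{i+1}$ (indices mod $q$), and these are distinct from $u$ since $q > 6k+3 \geq 2$. By the relabeling above, $w_{i-1} \in L_{i-1}$ and $w_{i+1} \in L_{i+1}$. Thus $u$ has a neighbor in $L_{i-1}$, namely $w_{i-1}$, and a neighbor in $L_{i+1}$, namely $w_{i+1}$, as required.
\end{proof}

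The single point requiring care — and the reason the corollary is not quite immediate from Proposition \ref{prop:all_in_isocycles} alone — is the reconciliation of the index $m$ at which $u$ sits on the chosen cycle $D$ with the index $i$ of the part $L_i$ containing $u$. This is exactly where the uniqueness of the partition (Proposition \ref{prop:unique_partition}) does the work: without it, the cycle through $u$ might a priori induce a \emph{different} partition, and one could not conclude that $u$'s cycle-neighbors fall into the parts adjacent to $L_i$ in the \emph{canonical} partition. Once uniqueness is invoked, the relabeling argument used in the proof of Proposition \ref{prop:unique_partition} aligns the two indexings, and the statement follows with no further computation.
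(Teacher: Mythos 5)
Your proof is correct and follows essentially the same route as the paper: the paper's (very terse) justification is exactly that $u$ lies on some isometric $q$-cycle (Proposition \ref{prop:all_in_isocycles}) whose vertices, by the uniqueness of the partition (Proposition \ref{prop:unique_partition} via Lemmas \ref{lem:one_vertex_per_li} and \ref{lem:edges_between_li}), sit one per part in cyclic order, so the cycle-neighbors of $u$ land in $L_{i-1}$ and $L_{i+1}$. Your write-up merely makes explicit the index-alignment step that the paper leaves implicit.
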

	
	Before tying up all of our structural results, we need one further lemma constraining the sizes of the parts $L_i$.
	
	\begin{lem}
		\label{lem:periodic_li}
		Let $G$ be an equatorial graph with girth $g$, and suppose $\{L_0, \dots, L_{q-1}\}$ is the partition of $G$ induced by isometric cycles.
		Then $|L_i| = |L_{i+g}|$ for all $i \in [0, q-1]$ (subscripts mod $q$).
	\end{lem}
	
	\begin{proof}
		By Lemmas \ref{lem:extremal_disk_counting} and \ref{lem:disk_in_interval}, we have $|L_{i}| + \dots + |L_{i+g-1}| = |\dd_k(u_{i+k})| = M(\delta, g)$ if $g$ is odd, and $|L_{i}| + \dots + |L_{i+g-1}| = |\dd_k(e_{i+k})| = M(\delta, g)$ if $g$ is even.
		Similarly, $|L_{i+1}| + \dots + |L_{i+g}| = M(\delta, g)$. 
		Therefore $|L_{i}| + \dots + |L_{i+g-1}| = |L_{i+1}| + \dots + |L_{i+g}|$, and so $|L_i| = |L_g|$.
	\end{proof}
	
	Theorem \ref{thm:equatorial_structure} gives our final description of any arbitrary equatorial graph, and is straightforward to prove from previous results of this section.
	
	\begin{thm}
		\label{thm:equatorial_structure}
		Let $G=(V,E)$ be an equatorial $(\delta+, g, q)$-graph.
		Then $G$ is $\delta$-regular and every vertex lies on an isometric $q$-cycle.
		The vertex set $V$ admits a unique partition ${V = L_0 \cup \dots \cup L_{q-1}}$ that satisfies the properties below. 
		For this list, let $u$ be any vertex in some part $L_i$ of the partition, $k = \lceil \frac{g}{2} \rceil - 1$ and consider all subscripts mod $q$.
		\begin{itemize}[noitemsep]
			\item The vertex $u$ has neighbors in $L_{i-1}$ and $L_{i+1}$. 
			\item $\dd_1(u) \subseteq L_{i-1} \cup L_i \cup L_{i+1}$.
			\item Every isometric $q$-cycle of $G$ has exactly one vertex in each part $L_i$.
			\item For any vertices $v \in L_{i-k}$ and $w \in L_{i+k}$, we have $L_i = \dd_k(v) \cap \dd_k(w)$. 
			\item $|L_j| = |L_{j+g}|$ for all $j$.
			\item $|L_j| + \dots + |L_{j+g-1}| = M(\delta, g)$ for all $j$.
		\end{itemize}
	\end{thm}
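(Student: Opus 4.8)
The plan is to read Theorem \ref{thm:equatorial_structure} as a summary whose content has, with one exception, already been proved in this section, so most of the work is bookkeeping: fix any isometric $q$-cycle and assemble the statements. Regularity is Proposition \ref{prop:equatorial_regular}, and the fact that every vertex lies on an isometric $q$-cycle is Proposition \ref{prop:all_in_isocycles}. For the partition, I would take $\{L_0, \dots, L_{q-1}\}$ to be the partition induced by a fixed isometric $q$-cycle $C$ as in Definition \ref{def:partition}; Lemma \ref{lem:li_partition} shows this is a genuine partition of $V$, and Proposition \ref{prop:unique_partition} shows it is independent of the choice of $C$, which is the asserted uniqueness.

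I would then clear the bulleted properties one at a time. The first bullet is exactly Corollary \ref{cor:neighbors_in_li}. The second follows immediately from Lemma \ref{lem:edges_between_li}: since $u \in L_i$ and each of its neighbors lies in a part whose index differs from $i$ by at most $1$, we get $\dd_1(u) = \{u\} \cup N(u) \subseteq L_{i-1} \cup L_i \cup L_{i+1}$. The third bullet is Lemma \ref{lem:one_vertex_per_li}. The fifth bullet is Lemma \ref{lem:periodic_li}, and the sixth is the identity $|L_i| + \dots + |L_{i+g-1}| = M(\delta, g)$ established inside the proof of that lemma via Lemmas \ref{lem:extremal_disk_counting} and \ref{lem:disk_in_interval}.

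The only genuinely new step, and the one I expect to be the main obstacle, is the fourth bullet, which upgrades the defining formula $L_i = \dd_k(u_{i-k}) \cap \dd_k(u_{i+k})$ from the specific cycle vertices $u_{i-k}, u_{i+k}$ to \emph{arbitrary} $v \in L_{i-k}$ and $w \in L_{i+k}$. The subtlety is that $v$ and $w$ need not lie on a common isometric $q$-cycle, whereas the definition is phrased in terms of two vertices of the same cycle. My plan is to manufacture such a common cycle using the rerouting technique of Proposition \ref{prop:all_in_isocycles}: starting from any isometric $q$-cycle, I reroute it through $v$ within the window of positions $\{i-2k+1, \dots, i-1\}$ and, independently, through $w$ within the window $\{i+1, \dots, i+2k-1\}$. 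These windows are disjoint and both retain the position-$i$ vertex, so the two reroutings can be carried out simultaneously; re-running the part-membership bookkeeping of Proposition \ref{prop:all_in_isocycles} (each rerouted vertex stays in its expected part by Lemma \ref{lem:edges_between_li}) then shows the resulting $C''$ is again an isometric $q$-cycle with $v$ at position $i-k$ and $w$ at position $i+k$. Applying Definition \ref{def:partition} to $C''$ exhibits $\dd_k(v) \cap \dd_k(w)$ as the index-$i$ part of the partition induced by $C''$, and Proposition \ref{prop:unique_partition} identifies that part with $L_i$.

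I prefer this route because it is uniform in the parity of $g$: it sidesteps the fact that the disk description in Lemma \ref{lem:disk_in_interval} is stated for edge-disks rather than vertex-disks in the even case, which would otherwise force a separate argument. The one point deserving care is confirming that the simultaneous double rerouting preserves isometricity, which I would verify exactly as in Proposition \ref{prop:all_in_isocycles} but applied to both windows at once, using $q > 6k+3$ to guarantee the two modified windows neither overlap each other nor wrap around to meet.
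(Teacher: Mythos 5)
Your proposal is correct and is essentially the paper's own proof: the paper gives no separate argument for Theorem \ref{thm:equatorial_structure}, stating only that it ``is straightforward to prove from previous results of this section,'' and your assembly cites exactly those results (Propositions \ref{prop:equatorial_regular}, \ref{prop:all_in_isocycles}, \ref{prop:unique_partition}, Lemmas \ref{lem:li_partition}, \ref{lem:edges_between_li}, \ref{lem:one_vertex_per_li}, \ref{lem:periodic_li}, and Corollary \ref{cor:neighbors_in_li}). Your double-rerouting argument for the fourth bullet is a sound way to supply the one detail the paper leaves implicit --- and it can even be simplified to two successive applications of the rerouting from Proposition \ref{prop:all_in_isocycles}, since the window $[i+1, i+2k-1]$ modified to insert $w$ is disjoint from the position $i-k$ already occupied by $v$, after which Proposition \ref{prop:unique_partition} identifies $\dd_k(v) \cap \dd_k(w)$ with $L_i$ exactly as you describe.
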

	
	Let $C_q^\circ$ be the cycle of length $q$ with loops on every vertex.
	Theorem \ref{thm:equatorial_structure} yields the following remark concerning homomorphisms and retracts from an equatorial graph.
	
	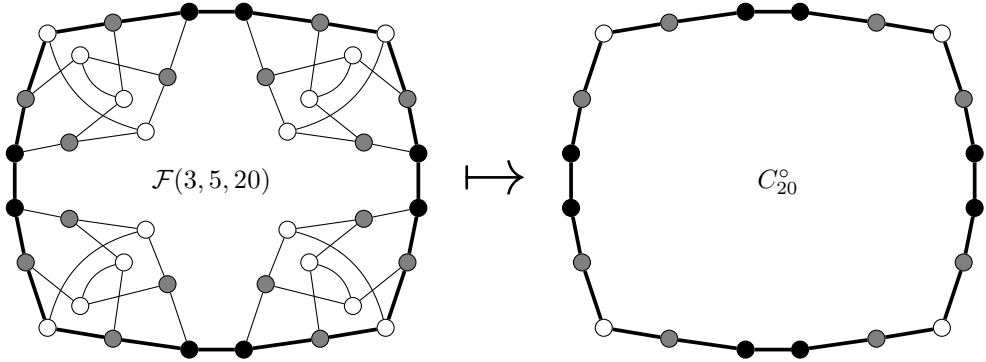
\begin{figure}[h]
		\centering
		\begin{tikzpicture}[scale=0.58, inner sep= 0.8mm]
	\begin{pgfonlayer}{nodelayer}
		\node [style=blackvertex] (0) at (-9.5, 2.5) {};
		\node [style=dgreyvertex] (2) at (-9.25, 3.75) {};
		\node [style=dgreyvertex] (3) at (-8.25, 2.75) {};
		\node [style=whitevertex] (4) at (-8.75, 5.25) {};
		\node [style=whitevertex] (5) at (-8, 4.75) {};
		\node [style=whitevertex] (6) at (-7, 3.75) {};
		\node [style=whitevertex] (7) at (-6.5, 3) {};
		\node [style=dgreyvertex] (8) at (-7.25, 5.5) {};
		\node [style=dgreyvertex] (9) at (-6, 4.25) {};
		\node [style=blackvertex] (10) at (-5.5, 5.75) {};
		\node [style=blackvertex] (11) at (-0.25, 2.5) {};
		\node [style=dgreyvertex] (12) at (-0.5, 3.75) {};
		\node [style=dgreyvertex] (13) at (-1.5, 2.75) {};
		\node [style=whitevertex] (14) at (-1, 5.25) {};
		\node [style=whitevertex] (15) at (-1.75, 4.75) {};
		\node [style=whitevertex] (16) at (-2.75, 3.75) {};
		\node [style=whitevertex] (17) at (-3.25, 3) {};
		\node [style=dgreyvertex] (18) at (-2.5, 5.5) {};
		\node [style=dgreyvertex] (19) at (-3.75, 4.25) {};
		\node [style=blackvertex] (20) at (-4.25, 5.75) {};
		\node [style=blackvertex] (21) at (-9.5, 1.25) {};
		\node [style=dgreyvertex] (22) at (-9.25, 0) {};
		\node [style=dgreyvertex] (23) at (-8.25, 1) {};
		\node [style=whitevertex] (24) at (-8.75, -1.5) {};
		\node [style=whitevertex] (25) at (-8, -1) {};
		\node [style=whitevertex] (26) at (-7, 0) {};
		\node [style=whitevertex] (27) at (-6.5, 0.75) {};
		\node [style=dgreyvertex] (28) at (-7.25, -1.75) {};
		\node [style=dgreyvertex] (29) at (-6, -0.5) {};
		\node [style=blackvertex] (30) at (-5.5, -2) {};
		\node [style=blackvertex] (31) at (-0.25, 1.25) {};
		\node [style=dgreyvertex] (32) at (-0.5, 0) {};
		\node [style=dgreyvertex] (33) at (-1.5, 1) {};
		\node [style=whitevertex] (34) at (-1, -1.5) {};
		\node [style=whitevertex] (35) at (-1.75, -1) {};
		\node [style=whitevertex] (36) at (-2.75, 0) {};
		\node [style=whitevertex] (37) at (-3.25, 0.75) {};
		\node [style=dgreyvertex] (38) at (-2.5, -1.75) {};
		\node [style=dgreyvertex] (39) at (-3.75, -0.5) {};
		\node [style=blackvertex] (40) at (-4.25, -2) {};
		\node [style=none] (41) at (1.5, 1.85) {\Huge $\mapsto$};
		\node [style=blackvertex] (42) at (3.25, 2.5) {};
		\node [style=dgreyvertex] (43) at (3.5, 3.75) {};
		\node [style=whitevertex] (45) at (4, 5.25) {};
		\node [style=dgreyvertex] (49) at (5.5, 5.5) {};
		\node [style=blackvertex] (51) at (7.25, 5.75) {};
		\node [style=blackvertex] (52) at (12.5, 2.5) {};
		\node [style=dgreyvertex] (53) at (12.25, 3.75) {};
		\node [style=whitevertex] (55) at (11.75, 5.25) {};
		\node [style=dgreyvertex] (59) at (10.25, 5.5) {};
		\node [style=blackvertex] (61) at (8.5, 5.75) {};
		\node [style=blackvertex] (62) at (3.25, 1.25) {};
		\node [style=dgreyvertex] (63) at (3.5, 0) {};
		\node [style=whitevertex] (65) at (4, -1.5) {};
		\node [style=dgreyvertex] (69) at (5.5, -1.75) {};
		\node [style=blackvertex] (71) at (7.25, -2) {};
		\node [style=blackvertex] (72) at (12.5, 1.25) {};
		\node [style=dgreyvertex] (73) at (12.25, 0) {};
		\node [style=whitevertex] (75) at (11.75, -1.5) {};
		\node [style=dgreyvertex] (79) at (10.25, -1.75) {};
		\node [style=blackvertex] (81) at (8.5, -2) {};
		\node [style=none] (82) at (8, 1.85) {$C_{20}^\circ$};
		\node [style=none] (83) at (-5, 1.85) {$\mathcal{F}(3,5,20)$};
	\end{pgfonlayer}
	\begin{pgfonlayer}{edgelayer}
		\draw [style = thick] (2) to (0);
		\draw (0) to (3);
		\draw [style=thick] (4) to (2);
		\draw (2) to (5);
		\draw (3) to (6);
		\draw (3) to (7);
		\draw (5) to (9);
		\draw (9) to (7);
		\draw (8) to (6);
		\draw [style=thick] (4) to (8);
		\draw [style=thick] (8) to (10);
		\draw (10) to (9);
		\draw [bend right] (5) to (6);
		\draw [bend right] (4) to (7);
		\draw [style=thick] (12) to (11);
		\draw (11) to (13);
		\draw [style=thick] (14) to (12);
		\draw (12) to (15);
		\draw (13) to (16);
		\draw (13) to (17);
		\draw (15) to (19);
		\draw (19) to (17);
		\draw (18) to (16);
		\draw [style=thick] (14) to (18);
		\draw [style=thick] (18) to (20);
		\draw (20) to (19);
		\draw [bend left] (15) to (16);
		\draw [bend left] (14) to (17);
		\draw [style=thick] (10) to (20);
		\draw [style=thick] (22) to (21);
		\draw (21) to (23);
		\draw [style=thick] (24) to (22);
		\draw (22) to (25);
		\draw (23) to (26);
		\draw (23) to (27);
		\draw (25) to (29);
		\draw (29) to (27);
		\draw (28) to (26);
		\draw [style=thick] (24) to (28);
		\draw [style=thick] (28) to (30);
		\draw (30) to (29);
		\draw [bend left] (25) to (26);
		\draw [bend left] (24) to (27);
		\draw [style=thick] (32) to (31);
		\draw (31) to (33);
		\draw [style=thick] (34) to (32);
		\draw (32) to (35);
		\draw (33) to (36);
		\draw (33) to (37);
		\draw (35) to (39);
		\draw (39) to (37);
		\draw (38) to (36);
		\draw [style=thick] (34) to (38);
		\draw [style=thick] (38) to (40);
		\draw (40) to (39);
		\draw [bend right] (35) to (36);
		\draw [bend right] (34) to (37);
		\draw [style=thick] (30) to (40);
		\draw [style=thick] (0) to (21);
		\draw [style=thick] (11) to (31);
		\draw [style=thick] (43) to (42);
		\draw [style=thick] (45) to (43);
		\draw [style=thick] (45) to (49);
		\draw [style=thick] (49) to (51);
		\draw [style=thick] (53) to (52);
		\draw [style=thick] (55) to (53);
		\draw [style=thick] (55) to (59);
		\draw [style=thick] (59) to (61);
		\draw [style=thick] (51) to (61);
		\draw [style=thick] (63) to (62);
		\draw [style=thick] (65) to (63);
		\draw [style=thick] (65) to (69);
		\draw [style=thick] (69) to (71);
		\draw [style=thick] (73) to (72);
		\draw [style=thick] (75) to (73);
		\draw [style=thick] (75) to (79);
		\draw [style=thick] (79) to (81);
		\draw [style=thick] (71) to (81);
		\draw [style=thick] (42) to (62);
		\draw [style=thick] (52) to (72);
	\end{pgfonlayer}
\end{tikzpicture}
		\caption{The equatorial graph $\mathcal{F}(3,5,20)$, considered as a reflexive graph with loops on every vertex, retracts onto the bolded isometric cycle. If two vertices are in the same part $L_i$ of the partition induced by isometric cycles, then they are drawn the same color.}
		\label{fig:homomorphism}
	\end{figure} 
	\begin{rem}
		\label{rem:equatorial_retract}
		Let $G$ be an equatorial graph with equator $q$.
		Then there exists a homomorphism $f : G \to C_q^\circ$ that is surjective on non-loop edges.
		If $C$ is any isometric $q$-cycle of $G$, then the vertices of $C_q^\circ$ can be relabeled to make $f$ a retraction of $G$ onto $C$.
	\end{rem}
	
	\section{Existence of Equatorial Graphs}
	\label{sec:extremal_parameters}
	
	In this section we show that if there are any equatorial graphs with minimum degree $\delta$ and girth $g$, then there are infinitely many. 
	This is a stark departure from the situation for Moore graphs, as there are only finitely many with odd girth.
	We also give a condition under which the existence of an equatorial $(\delta, g)$-graph implies the existence of a Moore $(\delta, g)$-graph.
	
	\begin{prop}
		\label{prop:infinitely_many_q}
		Suppose there exists an equatorial graph with minimum degree $\delta$, girth $g$ and equator $q$.
		Then for all integers $j\geq 2$, there exists an equatorial graph with the same minimum degree and girth having equator $jq$.
	\end{prop}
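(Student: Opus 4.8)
The plan is to realize the required graph as a $j$-fold cyclic cover (an ``unrolling'') of $G$ along the canonical partition $V = L_0 \cup \dots \cup L_{q-1}$ supplied by Theorem \ref{thm:equatorial_structure}. I would take the vertex set of the new graph $G'$ to be $V \times \mathbb{Z}/j\mathbb{Z}$, assigning the copy $(v,t)$ of a vertex $v \in L_i$ to a new layer indexed by $tq + i \in \mathbb{Z}/jq\mathbb{Z}$, so that $G'$ has $jq$ layers $L'_0, \dots, L'_{jq-1}$. By Lemma \ref{lem:edges_between_li} every edge of $G$ either lies inside one part or joins two consecutive parts; I would keep each non-wrap-around edge inside its copy (joining $(v,t)$ to $(w,t)$) and redirect every ``wrap'' edge, the ones between $L_{q-1}$ and $L_0$, so that the endpoint in $L_{q-1}$ of copy $t$ is joined to the endpoint in $L_0$ of copy $t+1$ (indices mod $j$). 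This is precisely the cover obtained by pulling back the homomorphism $G \to C_q^\circ$ of Remark \ref{rem:equatorial_retract} along the standard $j$-fold cyclic cover $C_{jq}^\circ \to C_q^\circ$.

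The easy invariants follow from the projection $\pi : G' \to G$, $(v,t) \mapsto v$, which is a local isomorphism. Consequently $G'$ is $\delta$-regular, since $G$ is by Theorem \ref{thm:equatorial_structure}, and $|V(G')| = j\,|V(G)| = \frac{jq}{g}M(\delta, g)$. For the girth I would argue with winding numbers: a covering map never decreases girth, so the girth of $G'$ is at least $g$; and since each edge shifts the layer index by at most $1$, any cycle of length $g < q$ in $G$ has net layer displacement of absolute value less than $q$, hence equal to $0$, hence winding number $0$, so it lifts to a closed $g$-cycle in $G'$. Thus the girth of $G'$ equals $g$.

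For the equator I would lift an isometric $q$-cycle $C$ of $G$. Since $C$ winds once around the layers, its winding number $1$ is nonzero modulo $j$, so $C$ closes up only after $j$ traversals, producing a cycle $C'$ of length $jq$ in $G'$. To see that $C'$ is isometric, I would use that every edge of $G'$ changes the $\mathbb{Z}/jq\mathbb{Z}$-layer index by at most $1$, so $d_{G'}(x,y)$ is at least the cyclic distance between the layers of $x$ and $y$; for two vertices of $C'$ this cyclic distance already equals their distance along $C'$, forcing equality. Hence $\eqt(G') \geq jq$. Finally, because $jq > 6k + 3$, Theorem \ref{thm:lower_bound} rules out $\eqt(G') > jq$, as that would force $|V(G')| > \frac{jq}{g}M(\delta, g)$; therefore $\eqt(G') = jq$, and $G'$ meets the bound and is equatorial with the same girth and minimum degree as $G$.

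I expect the main obstacle to lie in making the girth computation and the isometry of $C'$ fully rigorous. Both rest on the single observation that layer displacement is additive along paths and bounded by path length, together with careful bookkeeping of winding numbers modulo $j$ so that short cycles (winding number $0$) survive intact while the equatorial cycle (winding number $1$) genuinely lengthens to $jq$ rather than closing up early. Verifying that the edge-redirection is consistent enough for $\pi$ to be a genuine covering map, and that no unintended short cycles appear at the seams between copies, is the step I would write out most carefully.
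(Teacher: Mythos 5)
Your proposal is correct and is essentially the paper's own proof: your $j$-fold cyclic cover pulled back along $G \to C_q^\circ$ is exactly the paper's construction of cutting the edges between $L_{q-1}$ and $L_0$ and rejoining $j$ copies cyclically, and both arguments then cap the equator via Theorem \ref{thm:lower_bound} and exhibit an isometric $jq$-cycle by unrolling an isometric $q$-cycle, with isometry following from the fact that edges shift the layer index by at most one. The covering-space and winding-number language is just a different (and perfectly valid) phrasing of the paper's window/layer bookkeeping.
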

	
	\begin{proof}
		Let $G=(V,E)$ be an equatorial graph of minimum degree $\delta$, girth $g$ and equator $q$ and let $\{L_i : i\in [0,q-1]\}$ be the partition of $G$ induced by its isometric $q$-cycles.
		Form a graph $H$ from $G$ by removing all edges of the form $uv$, where $u\in L_{q-1}$ and $v\in L_0$.
		Now consider $j$ copies $H_0, \dots, H_{j-1}$ of $H$. 
		If $u$ and $v$ are vertices of $H$, denote the copies of $u$ and $v$ in $H_i$ by $u^i$ and $v^i$, respectively.
		Further, denote $L_{iq + t}'$ the copy of $L_t$ in $H_i$. 
		
		Construct a graph $\mathcal{J}$ from the disjoint union $H_0 \cup \dots \cup H_{j-1}$ by adding all edges of the form $u^iv^{i+1}$ where $u \in L_{q-1}$, $v \in L_0$ and $uv \in E$ (superscripts mod $j$).
		By construction, two vertices $x$ and $y$ of $\mathcal{J}$ are adjacent only if $x\in L_s'$ and $y \in L_t'$ such that $|s-t| \leq 1$ (mod $jq$).
		Also, any induced subgraph $\mathcal{J}[L_s' \cup \dots \cup L_{s+g-1}']$ of $\mathcal{J}$ is isomorphic to a subgraph $G[L_t' \cup \dots \cup L_{t+g-1}']$ of $G$ for some $t$. 
		Thus we see that $\mathcal{J}$ has girth $g$ and minimum degree $\delta$.
		The graph $\mathcal{J}$ also has order $\frac{jq}{g}M(\delta, g)$, since its vertex set is $j$ copies of $V$.
		All that remains is to show that $\eqt(\mathcal{J}) = jq$.
		By Theorem \ref{thm:lower_bound}, we have $\eqt(\mathcal{J}) \leq jq$, so it suffices to find an isometric $jq$-cycle.
		Let $C = w_0, \dots, w_{q-1}$ be an isometric $q$-cycle of $G$.
		Then
			\[
				C' = w_0^0, \dots, w_{q-1}^0, w_0^1, \dots, w_{q-1}^1, \dots \dots \dots, w_0^{j-1}, \dots, w_{q-1}^{j-1}
			\]
		is a cycle of length $jq$ in $\mathcal{J}$.
		The cycle $C'$ has one vertex in each set $L_i'$, and sets $L_s'$ and $L_t'$ have an edge between them only when $|s-t| \leq 1$.
		Therefore $C'$ is isometric, completing the proof.
	\end{proof}
	
	It's natural to ask whether Proposition \ref{prop:infinitely_many_q} can be `reversed': can we go from a large equatorial graph to a smaller one? 
	In the most extreme case, we would like to know if it is possible to derive a minimum degree $\delta$ girth $g$ Moore graph from such an equatorial graph, as this would completely determine the values of $\delta$ and $g$ for which there exists an equatorial graph.
	In general, it is not at all clear that this is possible. 
	The next Proposition gives a sufficient condition under which it is.
	
	\begin{prop}
		\label{prop:moore_from_equatorial}
		Let $G$ be an equatorial $(\delta+, g, q)$-graph. 
		If the partition of $G$ induced by its isometric $q$-cycles has a part $L_i$ with only a single vertex, then there exists a $\delta$-regular Moore graph with girth $g$.
	\end{prop}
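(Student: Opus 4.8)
The plan is to realize the desired Moore graph as a quotient of a single ``fundamental domain'' of the cyclic layer structure. Fix an isometric $q$-cycle $C = u_0,\dots,u_{q-1}$ inducing the partition $\{L_j\}$, and suppose $L_i=\{w\}$ is the singleton part; since $u_i\in L_i$ by Lemma \ref{lem:li_partition}, we have $w=u_i$. By the periodicity of part sizes (Lemma \ref{lem:periodic_li}) every part $L_{i+mg}$ is again a singleton, so write $w_m$ for its unique vertex, with $w_0=w$ and $w_m=u_{i+mg}$. Since $G$ is $\delta$-regular (Proposition \ref{prop:equatorial_regular}) and all edges run within or between consecutive parts (Lemma \ref{lem:edges_between_li}), each $w_m$ has all $\delta$ of its neighbours in $L_{i+mg-1}\cup L_{i+mg+1}$; let $c_m$ be the number lying in the ``left'' part $L_{i+mg-1}$. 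For a fixed $m$ I would form a graph $H_m$ by taking the induced subgraph on the $g+1$ consecutive parts $L_{i+mg}\cup\dots\cup L_{i+(m+1)g}$ and identifying the two boundary singletons $w_m$ and $w_{m+1}$ into one vertex $\omega$. A full period of parts sums to $M(\delta,g)$ (Theorem \ref{thm:equatorial_structure}), so these $g+1$ parts hold $M(\delta,g)+1$ vertices and the quotient $H_m$ has order exactly $M(\delta,g)$; it is simple, since for $g\ge 3$ the neighbourhoods of $w_m$ and $w_{m+1}$ lie in disjoint bands of parts.

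Every interior vertex keeps all $\delta$ neighbours inside the window, so in $H_m$ each vertex other than $\omega$ has degree $\delta$, while $\deg(\omega)=(\delta-c_m)+c_{m+1}$. I would then check the girth: any cycle of $H_m$ missing $\omega$ is a cycle of $G$, and any cycle through $\omega$ unfolds either to a closed walk at $w_m$ or $w_{m+1}$ (length $\ge g$) or to a $w_m$--$w_{m+1}$ path in $G$, whose length is at least $d_G(w_m,w_{m+1})=g$ (the two vertices sit at cyclic distance $g<q/2$ on the isometric cycle $C$). Hence the girth of $H_m$ is at least $g$. The whole problem now reduces to regularity, i.e.\ to $\deg(\omega)=\delta$, equivalently $c_{m+1}=c_m$: a $\delta$-regular graph of order $M(\delta,g)$ with girth at least $g$ cannot have girth exceeding $g$, as that would force order $\ge M(\delta,g+1)>M(\delta,g)$ by the minimum-degree ball count (Remark \ref{rem:k_degree}); so its girth is exactly $g$ and it is a Moore graph.

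The regularity is where I expect the real work. First, $\deg(\omega)>\delta$ is impossible: running the Moore-type ball count of Proposition \ref{prop:equatorial_regular} from $\omega$ (a disk of radius $k$ for odd $g$, or an incident edge for even $g$) would produce strictly more than $M(\delta,g)$ vertices, exceeding $|V(H_m)|$. Thus $c_{m+1}\le c_m$ for every $m$. The key observation is then a cyclic monotonicity argument: the indices $i+mg$ run around $\mathbb{Z}/q\mathbb{Z}$ and eventually return to $i$, so the chain of inequalities $c_m\ge c_{m+1}\ge\cdots$ closes into a cycle, and a non-increasing cyclic sequence must be constant. Hence $c_{m+1}=c_m$ for all $m$, each $H_m$ is $\delta$-regular, and therefore a $\delta$-regular Moore graph of girth $g$. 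The even-girth case runs identically, with parts and edges handled as in Lemmas \ref{lem:ti_is_li} and \ref{lem:disk_in_interval}; the one step I would scrutinise most is the ball count forbidding $\deg(\omega)>\delta$, since it must use only the minimum degree $\delta$ of the remaining vertices together with girth at least $g$, exactly as in Proposition \ref{prop:equatorial_regular}.
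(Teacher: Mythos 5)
Your proof is correct, and its core construction is exactly the paper's: the paper also uses Lemma \ref{lem:periodic_li} to obtain a second singleton part $L_{i+g}=\{v\}$, takes the induced subgraph on the $g+1$ consecutive parts $L_i\cup\dots\cup L_{i+g}$, and identifies the two boundary singletons, obtaining a graph of order $M(\delta,g)+1-1=M(\delta,g)$. Where you genuinely diverge is in the two verification steps. For regularity of the quotient, the paper argues locally and directly: since $L_i$ and $L_{i+g}$ are singletons, Corollary \ref{cor:neighbors_in_li} forces \emph{every} vertex of an adjacent part to be adjacent to the singleton, so $N(u)=L_{i-1}\cup L_{i+1}$ and $N(v)=L_{i+g-1}\cup L_{i+g+1}$; in your notation this says $c_m=|L_{i+mg-1}|$ exactly, so the periodicity $|L_{i+1}|=|L_{i+g+1}|$ of Lemma \ref{lem:periodic_li} gives $\deg(\tilde{u})=|L_{i+1}|+|L_{i+g-1}|=\delta$ in one line. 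Your route instead establishes only the inequality $\deg(\omega)\le\delta$ (via the Moore-type ball count, which is legitimate once girth at least $g$ is known) and then closes it globally by the cyclic monotonicity of the sequence $c_0\ge c_1\ge\dots$ around $\mathbb{Z}/q\mathbb{Z}$ --- a sound and rather elegant trick, but it is doing work that Corollary \ref{cor:neighbors_in_li} has already done. Likewise, for girth exactly $g$ the paper simply exhibits a $g$-cycle: a $u$--$v$ geodesic (of length $g$, both singletons lying on the isometric cycle) becomes a cycle after identification; you instead invoke strict monotonicity of the Moore bound, $M(\delta,g+1)>M(\delta,g)$, which is true but is an additional arithmetic fact (it equals $(\delta-1)^k$ or $(\delta-1)^{k+1}$ depending on parity) that the paper's argument never needs. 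Both of your substitutions are valid; the paper's versions are simply shorter and stay local to the window of parts.
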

	
	\begin{proof}
		Let $\{L_i : i \in [0,q-1]\}$ be the partition of $G$ induced by its isometric $q$-cycles and suppose without loss of generality that $L_0 = \{u\}$ for some vertex $u$. 
		Note that $G$ is $\delta$-regular by Proposition \ref{prop:equatorial_regular}.
		By Lemma \ref{lem:periodic_li}, we have $L_g = \{v\}$ for some vertex $v$.
		Let $H = G[L_0 \cup \dots \cup L_g] \big/ {\sim}$ be the quotient graph formed from $G[L_0 \cup \dots \cup L_g]$ by identifying $u$ and $v$.
		We denote the new identified vertex of $H$ by $\tilde{u}$, and reserve labels $u$ and $v$ for the distinct vertices of $G$.
		We claim that $H$ is a Moore graph.
		By Lemmas \ref{lem:extremal_disk_counting} and \ref{lem:disk_in_interval}, we have that $|L_0 \cup \dots \cup L_g| = M(\delta, g) + 1$, so $H$ has order $M(\delta, g)$.
		By Corollary \ref{cor:neighbors_in_li}, we have $N(u) = L_{-1} \cup L_1$ and $N(v) = L_{g-1} \cup L_{g+1}$.
		From Lemma \ref{lem:periodic_li}, we get $|L_1| = |L_{g+1}|$. 
		Since both $u$ and $v$ have degree $\delta$ in $G$, we conclude that $|L_{g-1}| = \delta - |L_1|$.
		Therefore $N(\tilde{u}) = L_1 \cup L_{g-1}$ contains $\delta$ vertices. 
		Because the other vertices of $H$ also have degree $\delta$, $H$ is $\delta$-regular.
		Since $d_G(u,v) = g$ and $G$ has girth $g$, we have that the girth of $H$ is at least $G$.
		Any cycle in $H$ formed by identifying $u$ and $v$ in a $u-v$ geodesic has length exactly $g$, so $H$ has girth at most $g$.
		Thus $H$ is a $\delta$-regular graph with girth $g$ and $M(\delta, g)$ vertices, completing the proof.
	\end{proof}
	
	\section{Characterizing Equatorial Graphs with Small Girth}
	\label{sec:low_girth_char}
	
	In this section, we characterize all equatorial graphs with girth 3 or girth 4. 
	We also characterize the equatorial graphs with girth 5 and degree 3. 
	The structure of these equatorial graphs closely mirrors the structure of the corresponding Moore graphs with the same girth and degree. 
	For girths 3 and 4, the characterization is simple to establish from Theorem \ref{thm:equatorial_structure}.
	However, the girth 5 degree 3 case is substantially more involved and the proof uses ad hoc arguments that seem unlikely to scale to larger girths and degrees. 
	
	\begin{thm}
		\label{thm:girth_3}
		Let $G = (V,E)$ be an equatorial $(\delta, 3, q)$-graph.
		Then $q \equiv 0 \pmod 3$ or $\delta \equiv 2 \pmod 3$. 
		The vertex set $V$ can be partitioned into $q$ parts $L_0, \dots, L_{q-1}$ such that each part induces a clique. 
		The edge set $E$ consists of the clique edges, and every edge of the form $uv$ where $u\in L_i$ and $v\in L_{i+1}$ (subscripts mod $q$).
		\begin{itemize}[noitemsep]
			\item If $q \equiv 0 \pmod 3$, then there are 3 positive integers $n_0, n_1, n_2$ such that $n_0 + n_1 + n_2 = \delta + 1$ and $|L_i| = n_j$ when $i \equiv j \pmod 3$.
			\item If $q \not\equiv 0 \pmod 3$, then $|L_i| = \frac{\delta + 1}{3}$ for all $i$.
		\end{itemize} 
	\end{thm}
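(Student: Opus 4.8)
The plan is to read essentially everything off Theorem \ref{thm:equatorial_structure}, since girth $3$ forces $k = \lceil 3/2\rceil - 1 = 1$ and $M(\delta,3) = 1 + \delta$. By that theorem $G$ is $\delta$-regular, and $V$ carries the canonical partition $V = L_0 \cup \dots \cup L_{q-1}$ in which every part is nonempty (each isometric $q$-cycle meets each part in exactly one vertex), every closed neighborhood $\dd_1(u)$ of a vertex $u \in L_i$ lies in $L_{i-1} \cup L_i \cup L_{i+1}$, and the part sizes satisfy $|L_j| = |L_{j+3}|$ and $|L_j| + |L_{j+1}| + |L_{j+2}| = \delta + 1$ for every $j$. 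The argument then has two halves: first pin down the edge set, and then extract the arithmetic on the part sizes.

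For the edge structure I would first establish the complete join between consecutive parts directly from the disk-intersection property of Theorem \ref{thm:equatorial_structure}: for $v \in L_{i-1}$ and $w \in L_{i+1}$ one has $L_i = \dd_1(v) \cap \dd_1(w)$, so in particular $L_i \subseteq \dd_1(v)$. Since the parts are disjoint, every $x \in L_i$ is distinct from $v$ yet within distance $1$ of it, hence adjacent to $v$; as this holds for every such $v$, each vertex of $L_i$ is adjacent to every vertex of $L_{i-1}$. Thus consecutive parts are completely joined, and by Lemma \ref{lem:edges_between_li} there are no edges between parts $L_i, L_j$ with $|i-j| \geq 2$ (indices mod $q$).

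To see that each $L_i$ induces a clique, I would count degrees. A vertex $x \in L_i$ is adjacent to all of $L_{i-1} \cup L_{i+1}$ by the previous step, contributing $|L_{i-1}| + |L_{i+1}|$ neighbors, and all its remaining neighbors lie in $L_i$ because $\dd_1(x) \subseteq L_{i-1} \cup L_i \cup L_{i+1}$. Since $G$ is $\delta$-regular and $|L_{i-1}| + |L_i| + |L_{i+1}| = \delta + 1$, the number of neighbors of $x$ inside $L_i$ is exactly $\delta - |L_{i-1}| - |L_{i+1}| = |L_i| - 1$, so $x$ is adjacent to every other vertex of $L_i$. This gives the clique structure, and together with the previous paragraph completely determines $E$ as the clique edges plus the complete joins between consecutive parts.

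Finally I would read off the size conditions from $|L_j| = |L_{j+3}|$, splitting on $q \bmod 3$. If $3 \mid q$, then $|L_j|$ depends only on $j \bmod 3$, so setting $n_j = |L_i|$ for $i \equiv j \pmod 3$ yields three positive integers (parts are nonempty) with $n_0 + n_1 + n_2 = \delta + 1$. If $3 \nmid q$, then $\gcd(3,q) = 1$, so the shift $i \mapsto i+3$ generates all of $\mathbb{Z}/q\mathbb{Z}$; hence all $|L_i|$ equal a common value $m$, and $3m = \delta + 1$ forces $m = \frac{\delta+1}{3}$ and in particular $\delta \equiv 2 \pmod 3$. This simultaneously produces the dichotomy ``$q \equiv 0 \pmod 3$ or $\delta \equiv 2 \pmod 3$''. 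No step here is deep --- everything is a direct consequence of Theorem \ref{thm:equatorial_structure} --- so the only place demanding genuine care is this last modular step, namely the observation that a shift by $3$ visits every index exactly when $\gcd(3,q)=1$.
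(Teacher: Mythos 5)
Your proposal is correct and follows essentially the same route as the paper: every claim is read off Theorem \ref{thm:equatorial_structure}, with the size dichotomy coming from $|L_j| = |L_{j+3}|$ and $\gcd(3,q)$. The only cosmetic differences are that the paper gets all adjacencies in a single count (the closed neighborhood $\dd_1(u)$ has $\delta+1$ vertices and sits inside $L_{i-1}\cup L_i\cup L_{i+1}$, which has exactly $\delta+1$ vertices, hence equals it) rather than splitting the join and clique steps, and it derives the congruence ``$q \equiv 0$ or $\delta \equiv 2 \pmod 3$'' from integrality of $n = \frac{q(\delta+1)}{3}$ instead of from the part-size arithmetic.
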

	
	\begin{proof}
		Since $G$ is equatorial it has order $n = \frac{q(\delta + 1)}{3}$. 
		Because the order $n$ is an integer, 3 divides $q$ or $3$ divides $\delta + 1$ so we have $q \equiv 0 \pmod 3$ or $\delta \equiv 2 \pmod 3$.
		Use Theorem \ref{thm:equatorial_structure} to partition $V$ into $q$ parts $L_0, \dots, L_{q-1}$.
		In particular, note for any $i$ that we have $|L_{i-1}| + |L_{i}| + |L_{i+1}| = \delta + 1$ and $|L_i| = |L_{i+3}|$.
		Since any vertex $u\in L_i$ has degree $\delta$, the vertex $u$ is adjacent to every vertex in $L_{i-1} 
		\cup L_{i} \cup L_{i+1}$.
		Thus each set $L_i$ induces a clique, and $E$ contains every edge of the form $uv$ where $u\in L_i$ and $v\in L_{i+1}$.
		The remainder of the theorem follows immediately from the fact that $|L_j| = |L_{j+3}|$ for all $j$.
	\end{proof}
	
	\begin{thm}
		\label{thm:girth_4}
		Let $G = (V,E)$ be an equatorial $(\delta, 4, q)$-graph.
		Then $q \equiv 0 \pmod 4$ or $\delta \equiv 0 \pmod 2$. 
		The vertex set $V$ can be partitioned into $q$ parts $L_0, \dots, L_{q-1}$, each of which is an independent set. 
		The edge set $E$ consists of every edge of the form $uv$ where $u\in L_i$ and $v\in L_{i+1}$ (subscripts mod $q$).
		\begin{itemize}[noitemsep]
			\item If $q \equiv 0 \pmod 4$, then there are 4 positive integers $n_0, n_1, n_2, n_3$ such that $n_0 + n_2 = \delta = n_1 + n_3$ and $|L_i| = n_j$ when $i \equiv j \pmod 4$.
			\item If $q \not\equiv 0 \pmod 4$, then $|L_i| = \frac{\delta}{2}$ for all $i$.
		\end{itemize} 
	\end{thm}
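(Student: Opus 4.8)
The plan is to specialize Theorem \ref{thm:equatorial_structure} to $g = 4$, where $k = \lceil\tfrac{g}{2}\rceil - 1 = 1$ and $M(\delta,4) = 2\delta$, so that an equatorial $(\delta,4,q)$-graph has order $n = \tfrac{q\delta}{2}$ and a unique partition $V = L_0 \cup \dots \cup L_{q-1}$ into nonempty parts (each part meets every isometric $q$-cycle). I would first establish the two structural claims (independence of the parts and complete bipartiteness between consecutive parts), then convert $\delta$-regularity into a linear recurrence on the sizes $a_i = |L_i|$, and finally read off the divisibility condition and the two cases by solving that recurrence around the cycle $\mathbb{Z}_q$. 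This mirrors the girth-$3$ argument of Theorem \ref{thm:girth_3}, but the closing divisibility claim will need the recurrence rather than mere integrality of $n$.

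\emph{Independence and the edge set.} For independence I would use the identity in Theorem \ref{thm:equatorial_structure} that for any $a \in L_{i-1}$ (and any $b\in L_{i+1}$) one has $L_i = \dd_1(a) \cap \dd_1(b)$. Since $L_{i-1}$ is nonempty and disjoint from $L_i$, the centre $a \notin L_i$, so $\dd_1(a) = \{a\} \cup N(a)$ forces $L_i \subseteq N(a)$; thus every vertex of $L_i$ is a neighbour of $a$, and two adjacent vertices of $L_i$ would form a triangle with $a$, which is impossible when $g = 4$. Hence each $L_i$ is an independent set. Applying the same identity at index $i+1$ with centre $u \in L_i$ gives $L_{i+1} = \dd_1(u) \cap \dd_1(c) \subseteq \{u\} \cup N(u)$ for $c\in L_{i+2}$, and since $u\notin L_{i+1}$ we get $L_{i+1} \subseteq N(u)$; that is, $u$ is adjacent to \emph{every} vertex of $L_{i+1}$. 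Combining this with Lemma \ref{lem:edges_between_li} (edges run only within a part or between consecutive parts) and the independence just proved pins down $E$ exactly as the union of complete bipartite graphs between each pair of consecutive parts.

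\emph{The size recurrence.} With the edge set described, a vertex $u \in L_i$ has neighbourhood exactly $L_{i-1}\cup L_{i+1}$, so $\delta$-regularity (Proposition \ref{prop:equatorial_regular}) gives $|L_{i-1}| + |L_{i+1}| = \delta$ for every $i$. Writing $a_i = |L_i|$, this reads $a_{i+1} = \delta - a_{i-1}$, equivalently $a_{i+2} = \delta - a_i$, a recurrence on $\mathbb{Z}_q$ with every $a_i \geq 1$; it refines both the sum condition $\sum_{t=0}^{3}|L_{j+t}| = 2\delta$ and the periodicity $|L_j| = |L_{j+4}|$ already supplied by Theorem \ref{thm:equatorial_structure} and Lemma \ref{lem:periodic_li}.

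\emph{Solving around the cycle.} Iterating $a_{i+2} = \delta - a_i$ gives $a_{i+4} = a_i$, so along each orbit of the step-$2$ map the sizes alternate between a value and its $\delta$-complement. If $4 \mid q$, the four residues modulo $4$ close up consistently, and setting $n_j = |L_j|$ for $j\in\{0,1,2,3\}$ yields positive integers with $n_0 + n_2 = \delta = n_1 + n_3$, giving the first bullet. If $4 \nmid q$ — that is, $q$ odd or $q \equiv 2 \pmod 4$ — the step-$2$ orbit has odd length ($q$ or $q/2$, respectively), so applying the complementation an odd number of times and returning to the start forces $a_i = \delta - a_i$; hence $a_i = \tfrac{\delta}{2}$ for all $i$, which requires $\delta$ even and gives the second bullet. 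The same dichotomy yields the opening divisibility statement, since $4 \nmid q$ forces $\delta$ even, i.e. $q \equiv 0 \pmod 4$ or $\delta \equiv 0 \pmod 2$. I expect the only delicate point to be exactly this modular book-keeping — correctly tracking the parity of the number of complementation steps needed to close the step-$2$ orbit in the two sub-cases $q$ odd and $q \equiv 2 \pmod 4$ — as every other step follows directly from Theorem \ref{thm:equatorial_structure}.
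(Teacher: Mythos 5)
Your proposal is correct and follows essentially the same route as the paper: both specialize Theorem \ref{thm:equatorial_structure} to $k=1$, prove each $L_i$ is independent via the identity $L_i = \dd_1(x)\cap\dd_1(y)$, deduce that consecutive parts are completely joined so that $|L_{i-1}|+|L_{i+1}|=\delta$, and then solve this constraint around $\mathbb{Z}_q$ to get the two cases and the divisibility claim. The only cosmetic difference is that you obtain the complete bipartiteness directly from the disk identity (giving $L_{i+1}\subseteq N(u)$ for $u\in L_i$), whereas the paper gets it by counting degrees against $|L_{i-1}|+|L_i|+|L_{i+1}|+|L_{i+2}|=2\delta$; your orbit-parity bookkeeping for $4\nmid q$ is the same argument the paper compresses into the assertion $|L_i|=|L_{i+2}|$.
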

	
	\begin{proof}
		Because $G$ is equatorial, it has order $n = \frac{q}{2}\delta$.
		Let $L_0, \dots, L_{q-1}$ be the partition of $V$ guaranteed by Theorem \ref{thm:equatorial_structure}.
		Note that both $|L_{i-1}| + |L_{i}| + |L_{i+1}| + |L_{i+2}| = 2\delta$ and $|L_i| = |L_{i+4}|$ for all $i$.
		
		We claim that each set $L_i$ is independent. 
		Assume to the contrary that there exist a pair $u$ and $v$ of adjacent vertices in some set $L_i$.
		If $x \in L_{i-1}$ and $y \in L_{i+1}$, then $L_i = \dd_1(x) \cap \dd_1(y)$.
		Thus $x, u, v$ is a 3-cycle, contradicting the girth of $G$.
		Thus $L_i$ is independent.
		
		Let $u \in L_{i}$ and $v \in L_{i+1}$. 
		Since both $L_i$ and $L_{i+1}$ are independent, we have that $N(u) \subseteq L_{i-1} \cup L_{i+1}$ and $N(v) \subseteq L_{i} \cup L_{i+2}$.
		Because both $u$ and $v$ have degree $\delta$, and $|L_{i-1}| + |L_{i}| + |L_{i+1}| + |L_{i+2}| = 2\delta$, we conclude that $N(u) = L_{i-1} \cup L_{i+1}$ and $N(v) = L_{i} \cup L_{i+2}$.
		Thus $E$ consists of all edges $uv$ where $u\in L_i$ and $v\in L_{i+1}$.
		Further, $|L_i| + |L_{i+2}| = \delta$ and $|L_i| = |L_{i+4}|$ for every $i$, completing the proof when $q \equiv 0 \pmod 4$.
		
		Suppose that $q \not\equiv 0 \pmod 4$.
		We then have that $|L_i| = |L_{i+2}|$ and $|L_i| = \delta - |L_{i+2}|$ for all $i$ (subscripts taken mod $q$).
		Thus $|L_i| = \frac{\delta}{2}$ for all $i$, and so $\delta$ is even.
	\end{proof}
	
	In the proof of Theorem \ref{thm:sharp_moore_parameters}, the graph $\mathcal{F}(3, 5, 5j)$ is constructed by taking $j$ disjoint copies $H_1, \dots, H_j$ of a Petersen graph with some edge $uv$ removed, and adding all edges $v_1u_2, v_2u3, \dots v_qu_1$ (for example, see Figure \ref{fig:homomorphism}).
	
	\begin{thm}
		\label{thm:girth_5_delta_3}
		Let $G$ be an equatorial $(3, 5, q)$-graph.
		Then $q \equiv 0 \pmod 5$, and $G$ is isomorphic to $\mathcal{F}(3, 5, q)$.
	\end{thm}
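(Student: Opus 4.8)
The plan is to lean entirely on the structural description of Theorem \ref{thm:equatorial_structure}, specialized to $\delta = 3$, $g = 5$, where $M(3,5) = 10$, $k = 2$ and $n = \frac{q}{5}\cdot 10 = 2q$. Fix an isometric $q$-cycle and the induced partition $\{L_i\}$; by Theorem \ref{thm:equatorial_structure} each part is nonempty, $|L_j| = |L_{j+5}|$, and any five consecutive parts have sizes summing to $10$. The first observation I would record is that girth $5$ together with $3$-regularity forces $\dd_2(v)$ to be a tree on exactly $1 + 3 + 6 = 10$ vertices for every $v$, which (combined with Lemma \ref{lem:disk_in_interval} and Proposition \ref{prop:all_in_isocycles}) gives $\dd_2(v) = L_{i-2}\cup\cdots\cup L_{i+2}$ whenever $v \in L_i$, with every vertex of $L_{i\pm 2}$ a distance-$2$ leaf reached through a unique neighbour of $v$. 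Writing $\ell, m, r$ for the number of neighbours of $v \in L_i$ in $L_{i-1}, L_i, L_{i+1}$, this tree picture yields the key local inequalities $\ell, r \ge 1$, $\ell + m + r = 3$, $|L_{i+2}| \le 2r$ and $|L_{i-2}| \le 2\ell$, and in particular no two consecutive parts can both have size $\ge 3$.

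Next I would pin down the part sizes. Since five consecutive parts sum to $10$, if every part had size $\ge 2$ they would all equal $2$, so it suffices to rule out the uniform pattern. When $|L_i| \equiv 2$, a short edge-count (each $L_i$ sends at least $2$ and, being $C_4$-free across consecutive parts, at most $3$ edges to each neighbour, and carries at most one internal edge) forces either a perfect matching plus internal edges or a $P_4$ between every consecutive pair; either configuration produces an explicit $4$-cycle, contradicting girth $5$. Hence some part, say $L_0$, is a single vertex. By periodicity $|L_{5t}| = 1$ for all $t$; were $\gcd(5,q) = 1$, periodicity would force \emph{every} part to be a singleton and $n = q \ne 2q$, so $\gcd(5,q) = 5$, i.e. $q \equiv 0 \pmod 5$, and the sizes form a genuine period-$5$ sequence $(n_0,\dots,n_4)$ summing to $10$.

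Then comes the combinatorial heart: showing $(n_0,\dots,n_4)$ is a rotation of $(1,2,4,2,1)$. Writing $L_0 = \{v\}$, its three edges split as $\{\ell,r\} = \{1,2\}$ between $L_{-1}$ and $L_1$; say $r = 2$. Applying the local inequalities at $v$, at its unique neighbour in $L_{-1}$, and at its two neighbours in $L_1$ (each of which has exactly one neighbour in the singleton $L_0$) yields $n_1, n_3, n_4 \le 2$ and $n_2 \le 4$, leaving only $n_2 \in \{3,4\}$. The case $n_2 = 3$ forces one of the two $L_1$-neighbours to carry an internal $L_1$-edge, pushing its degree above $3$ — a contradiction; and when $n_2 = 4$ the bound $n_1 \ge 2$ (forced by $r = 2$) gives $n_1 = 2$, whence $(n_3,n_4)$ is $(2,1)$ or $(1,2)$, the latter pattern $(1,2,4,1,2)$ being eliminated because a vertex of the size-$2$ part adjacent to the singleton cannot reach degree $3$. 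Only $(1,2,4,2,1)$ survives. This is the step I expect to be the main obstacle, as it is exactly the ad hoc degree-bookkeeping the paper warns about and does not obviously scale to larger parameters.

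Finally I would reconstruct $G$. With the pattern fixed, the singleton parts occur at indices $\equiv 0 \pmod 5$, and Proposition \ref{prop:moore_from_equatorial} applied to $L_0$ shows that identifying the two singletons bounding a window $L_0\cup\cdots\cup L_5$ produces a $3$-regular girth-$5$ graph on $10$ vertices — necessarily the unique Petersen graph. Thus each fundamental block $G[L_0\cup\cdots\cup L_4]$ is a Petersen graph with one edge deleted (its endpoints the two singletons), and the edges guaranteed by Corollary \ref{cor:neighbors_in_li} between consecutive singletons are precisely the seam edges of the construction. To make the identification rigid I would use the forced internal structure — each vertex of the size-$4$ part $L_2$ has exactly one neighbour in each of $L_1, L_3$ and one inside $L_2$, so $L_2$ carries a perfect matching — together with the uniqueness of the Petersen graph to conclude $G \cong \mathcal{F}(3,5,q)$.
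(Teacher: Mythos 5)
Your proposal follows the paper's skeleton at both ends: the same reduction to parts of average size $2$, the same dichotomy (all parts of size $2$ versus some singleton part), elimination of the all-$2$ case by exhibiting a $4$-cycle, the same periodicity argument giving $q \equiv 0 \pmod 5$, and the same use of Proposition \ref{prop:moore_from_equatorial} to recognize each block $G[L_{5t}\cup\dots\cup L_{5t+4}]$ as the Petersen graph minus an edge. Where you genuinely diverge is your ``combinatorial heart'': the paper never derives the pattern $(1,2,4,2,1)$ at all. Instead it observes that if $L_{i+5}=\{v\}$ is a singleton, then by Corollary \ref{cor:neighbors_in_li} \emph{every} vertex of $L_{i+4}$ and of $L_{i+6}$ is adjacent to $v$, so $N(v)=L_{i+4}\cup L_{i+6}$ and hence $|L_{i+4}|+|L_{i+6}|=3$; this instantly produces a second singleton flanking each window and feeds directly into the $P-e$ identification. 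That single observation makes your entire part-size bookkeeping unnecessary --- which matters, because that bookkeeping is exactly where your sketch is least reliable.

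Concretely, your elimination of $n_2=3$ is wrong as stated. The pattern $(1,2,3,2,2)$ does not force an internal $L_1$-edge, and an internal $L_1$-edge cannot ``push a degree above $3$'' --- a vertex of $L_1$ with an internal edge simply splits its three edges as $1+1+1$ among $L_0,L_1,L_2$. The correct quick argument runs the other way: writing $L_1=\{x,x'\}$ (both adjacent to the singleton $v$), the two $L_1$-vertices have four edge-slots remaining; an internal $L_1$-edge would consume two of them, leaving only two edges to cover the three vertices of $L_2$, each of which needs a neighbour in $L_1$ by Corollary \ref{cor:neighbors_in_li} --- contradiction; with no internal edge, four edges land on three $L_2$-vertices, so some $y\in L_2$ is adjacent to both $x$ and $x'$, giving the $4$-cycle $v,x,y,x'$. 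Similarly, your claims that no two consecutive parts have size at least $3$, and the ``matching or $P_4$'' dichotomy in the all-$2$ case, are true but asserted rather than proved; the paper's all-$2$ argument (split on whether some part contains an internal edge, then chase the forced adjacencies of the two degree-$3$ vertices in the next part to a $4$-cycle) is the rigorous version of what you gesture at. With these repairs your route does close, but the paper's singleton-neighborhood trick is both shorter and avoids the fragile counting entirely.
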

	
	\begin{proof}
		Suppose that $G$ is an equatorial $(3, 5, q)$-graph, and note that $n = \frac{q}{5}(3^2 + 1) = 2q$.
		Let $L_0, \dots, L_{q-1}$ be the unique partition of $V$ from Theorem \ref{thm:equatorial_structure}.
		Since $n = 2q$, the average number of vertices of a part $L_i$ is $\frac{1}{q} \sum_{i=0}^q |L_i| = 2$.
		Thus there are two cases: either $|L_i| = 2$ for every $i$, or there exists some $i$ such that $|L_i| = 1$.
		
		First suppose that $|L_i| = 2$ for all $i$.
		Let $C = u_0, \dots, u_{q-1}$ be an isometric $q$-cycle of $G$ labeled so that $u_i \in L_i$.
		Let $v_i$ denote the unique vertex of $L_i - \{u_i\}$.
		We consider two subcases. 
		Subcase 1: there is some $i$ such that $u_i$ and $v_i$ are adjacent.
		Since $d(v_i) = 3$, the vertex $v_i$ has two neighbors in the set $L_{i-1} \cup L_{i+1}$.
		As $G$ has girth 5, $v_i$ is not adjacent to $u_{i-1}$ or $u_{i+1}$, so $v_i$ is adjacent to both $v_{i-1}$ and $v_{i+1}$.
		The vertices $v_{i+1}$ and $u_{i+1}$ both have degree 3. 
		Since $G$ has no 4-cycles, $v_{i+1}$ and $u_{i+1}$ are not adjacent.
		Thus $N(u_{i+1}) = \{u_i, u_{i+2}, v_{i+2}\}$ and $N(v_{i+1}) = \{v_i, v_{i+2}, u_{i+2}\}$. 
		However this induces a 4-cycle on $u_{i+1}, u_{i+2}, v_{i+1}, v_{i+2}$, contradicting the girth of $G$. 
		Subcase 2: for all $i \in [0, q-1]$, the vertices $u_i$ and $v_i$ are not adjacent.
		As $G$ has girth 5 and $d(v_i) = 3$, we have that $v_i$ is adjacent to $v_{i-1}, v_{i+1}$ and either $u_{i-1}$ or $u_{i+1}$. 
		Assume without loss of generality that $v_i$ is adjacent to $u_{i+1}$.
		Since $d(u_i) = 3$, the vertex $u_i$ is adjacent to either $v_{i-1}$ or $v_{i+1}$. 
		Thus $G$ contains a 4-cycle, contradicting that $G$ has girth 5.
		In both subcases we find a contradiction, so there exists some $i$ such that $|L_i| = 1$.
		
		By Lemma \ref{lem:periodic_li}, the part $|L_{i+5t}|$ also contains a single vertex for all $t\in \mathbb{Z}$ (subscripts mod $q$).
		If $q$ is not a multiple of 5, then $q$ and 5 are coprime and so $|L_j| = 1$ for all $j$, which is impossible.
		Thus $q \equiv 0 \pmod 5$.
		Denote $L_i = \{u\}$ and $L_{i+5} = \{v\}$.
		By the same argument as in the proof of Proposition \ref{prop:moore_from_equatorial}, the graph formed from $G[L_i \cup \dots \cup L_{i+5}]$ by identifying $u$ and $v$ is a Moore graph with $\delta = 3$ and $g = 5$.
		I.e., it is the Petersen graph.
		Since $N_G(v) = L_{i+4} \cup L_{i+6}$ and $d(v) = 3$, either $|L_{i+4}| = 1$ or $|L_{i+6}| = 1$.
		Assume without loss of generality that $|L_{i+4}| = 1$.
		Then $G[L_i \cup \dots L_{i+4}]$ is isomorphic to the Petersen graph $P$ with an edge $e$ removed. 
		As $q$ is a multiple of 5, the same argument shows that each induced subgraph of the form $G[L_{i+5t} \cup \dots \cup L_{i + 5t + 4}] = P - e$ ($t\in \mathbb{Z}$, subscripts mod $q$). 
	\end{proof}
	
	\section{Conclusion and further questions}
	\label{sec:conclusion}
	
	We have shown that $(\delta+, g, q)$-graphs satisfy a variant of the Moore bound, provided $q$ is sufficiently larger than $g$.
	We constructed graphs that show this bound is best possible when there exists a Moore graph with parameters $\delta$ and $g$.
	We call the `Moore graph analogues' attaining this lower bound equatorial graphs.
	We also bounded the order of $C_4$-free graphs with minimum degree $\delta$ and equator $q$, and showed this bound is almost sharp for many values of $\delta$.
	
	Theorem \ref{thm:equatorial_structure}, gives strong constraints on the structure of equatorial graphs.
	Like Moore graphs, equatorial graphs are regular, and every vertex lies on a maximum-length isometric cycle.
	We proved that every equatorial graph $G$ admits a highly structured unique partition induced by its isometric $q$-cycles --- and this partition can be realized as the partition induced by the fibers of a homomorphism from $G$ onto a cycle $C_q^\circ$ with a loop on each vertex.
	
	We used Theorem \ref{thm:equatorial_structure} to completely characterize the equatorial graphs with girth 3 and girth 4, as well the equatorial $(3,5)$-graphs.
	
	The most obvious question raised by the results above is this: do there exist equatorial graphs for combinations of $\delta$ and $g$ for which there is no Moore graph?
	The author conjectures that there do not.
	
	\begin{con}
		\label{conj:equatorial_iff_moore}
		There exists an equatorial graph with degree $\delta$ and girth $g$ if and only if there exists a Moore graph with degree $\delta$ and girth $g$.
	\end{con}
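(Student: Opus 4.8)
The forward implication is already in hand: if a Moore graph with degree $\delta$ and girth $g$ exists, then Theorem \ref{thm:sharp_moore_parameters} constructs equatorial graphs $\mathcal{F}(\delta, g, jg)$ for every $j \geq 3$, so that half of the conjecture is immediate. All of the difficulty lies in the reverse direction: starting from an arbitrary equatorial $(\delta, g, q)$-graph $G$, one must manufacture a Moore graph with the same degree and girth. Proposition \ref{prop:moore_from_equatorial} already does exactly this whenever the canonical partition $\{L_0, \dots, L_{q-1}\}$ of $G$ has a part of size one, by contracting the window $G[L_0 \cup \dots \cup L_g]$ through the identification of the two singleton parts $L_0$ and $L_g$. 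The plan is therefore to remove the single-vertex hypothesis from Proposition \ref{prop:moore_from_equatorial}.

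The natural vehicle is a period-$g$ quotient. By Lemma \ref{lem:periodic_li} we have $|L_i| = |L_{i+g}|$ for all $i$, and by Theorem \ref{thm:equatorial_structure} every window of $g$ consecutive parts carries exactly $M(\delta, g)$ vertices, with all edges lying inside a part or between consecutive parts. If one could produce, for each $i$, a bijection $\pi_i : L_i \to L_{i+g}$ compatible with adjacency throughout $G$, then identifying each $v \in L_i$ with $\pi_i(v)$ would collapse $G$ onto a graph supported on $L_0 \cup \dots \cup L_{g-1}$, which has precisely $M(\delta, g)$ vertices. The remaining checks would be routine and modelled on the proof of Proposition \ref{prop:moore_from_equatorial}: that the quotient inherits $\delta$-regularity from $G$ (Theorem \ref{thm:equatorial_structure}), that identifying parts a distance $g$ apart cannot drop the girth below $g$, and that any geodesic between the identified endpoints closes into a $g$-cycle, forcing the girth to equal $g$. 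This is exactly the mechanism underlying the verified case of Theorem \ref{thm:girth_5_delta_3}, where singleton parts supply the bijections for free and the period-$5$ quotient is forced to be the Petersen graph.

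The hard part is constructing the bijections $\pi_i$ when the parts have size greater than one. Theorem \ref{thm:equatorial_structure} does not force any part to be a singleton --- it permits, for instance, all parts to share a common size exceeding one --- so Proposition \ref{prop:moore_from_equatorial} need not apply directly, and even deciding which such layered configurations are realizable by an actual graph appears delicate. Two routes suggest themselves, each with a genuine obstacle. The first is to read the bijections off the bipartite connection graphs between consecutive layers, showing these are regular enough to be composed coherently around the cycle; the danger is that the composite $\pi_{q-1} \circ \dots \circ \pi_0$ need not be the identity, so the quotient is ill-defined unless $g \mid q$ and the layer matchings are globally consistent. The second is the contrapositive: invoke the non-existence of Moore graphs for the obstructed parameters (Feit--Higman for even girth, Damerell and Bannai--Ito for odd girth at least $7$) and argue that the tight local structure of an equatorial graph --- each disk $\dd_k(u_i)$ meeting the Moore bound exactly --- reproduces the same intersection numbers, hence the same integrality obstruction, that already rule out the corresponding Moore graph. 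The obstacle here is that equatorial graphs are not globally distance-regular, so the spectral bookkeeping behind the classical non-existence proofs must be localized to windows and then propagated around the cycle. Generalizing the singleton-forcing step of Theorem \ref{thm:girth_5_delta_3}, or replacing it by one of these genuinely global arguments, is the crux I cannot currently complete, and is the reason the statement is posed as a conjecture.
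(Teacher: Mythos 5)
Your assessment matches the paper exactly: the statement is posed as a conjecture, the paper likewise establishes only the forward direction via Theorem \ref{thm:sharp_moore_parameters}, and its only progress on the converse is the singleton-part sufficient condition of Proposition \ref{prop:moore_from_equatorial}, which you correctly identify as the step that would need to be generalized. Your conclusion that the reverse direction remains open is precisely the paper's own position, so the proposal is as correct as the statement allows.
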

	
	Note that Theorem \ref{thm:sharp_moore_parameters} establishes the easy direction: from a Moore graph with degree $\delta$ and girth $g$ we can construct an equatorial $(\delta, g)$-graph.
	
	Our results beg another extremal question: what can be said of minimum-order $(\delta, g, q)$-graphs? 
	We call a $\delta$-regular graph with girth $g$ and equator $q$ that has minimum order a $(\delta, g, q)$-\textit{cage}.
	These $(\delta, g, q)$-cages generalize both cages and equatorial graphs, as every equatorial graph is a $(\delta, g, q)$-cage, and every $(\delta, g)$-cage is a $(\delta, g, q)$-cage for some $q$.
	
	\begin{ques}
		\label{ques:cage}
		What bounds can be found for the orders of $(\delta, g, q)$-cages?
	\end{ques}
	
	Remark \ref{rem:cage_examples} gives some upper bounds addressing Question \ref{ques:cage} --- but only for $q$ a multiple of $g$.
	We note that a similar variant of cages, those with a prescribed diameter, has been studied in \cite{araujo2024note}.
	
	Throughout the paper, and in our definition of equatorial graphs, we restricted our attention to graphs in which $q > 6k+3$. 
	If $q$ is sufficiently small compared to $g$, even our lower bound $n \geq \frac{q}{g}M(\delta, g)$ can fail.
	For example, the wheel $C_5 + K_1$ has $n = 6$, $q=5$ and $g = \delta = 3$.
	We thus ask for bounds on the order of a graph when $g < q \leq 6k+3$.
	
	\begin{ques}
		\label{ques:small_q}
		What is the smallest order of a graph with girth $g \in \{2k+1, 2k+2\}$, minimum degree $\delta$ and equator $g < q \leq 6k+3$?
	\end{ques}
	
	\section*{Acknowledgments}
	The author is grateful to David Erwin for helpful discussions.
	
	\bibliographystyle{amsplain}
	\bibliography{isocycles}{}
	
\end{document}